\documentclass[12pt,reqno]{amsart} 
\usepackage{graphicx}
\usepackage{amssymb,amsmath}
\usepackage{amsthm}
\usepackage{color}
\usepackage{hyperref}
\usepackage{epstopdf}

\usepackage{subfigure} 
\usepackage{caption}

\usepackage{verbatim} 
	
\usepackage{relsize} 
	
	\newcommand{\be}{\begin{equation}}
		
		\newcommand{\ee}{\end{equation}}

	\newcommand{\R}{{\mathbb R}}

	\newcommand{\Ker}{{\rm \,Ker}}

	\newcommand{\bigslant}[2]{{\raisebox{.1em}{$#1$}\left/\raisebox{-.1em}{$#2$}\right.}}

	\numberwithin{equation}{section}
	\numberwithin{figure}{section}
	
	\newtheorem{theorem}{Theorem}[section]
	\newtheorem{proposition}[theorem]{Proposition}
	\newtheorem{remark}[theorem]{Remark}
	\newtheorem{lemma}[theorem]{Lemma}
	\newtheorem{corollary}[theorem]{Corollary}
		\newtheorem{conjecture}[theorem]{Conjecture}

\title[Stability of periodic waves in the NLS-IDD model]{Stability of periodic waves in the model \\ with intensity--dependent dispersion}

\author[F. Natali]{F\'abio Natali}
\address[F. Natali]{Department of Mathematics - State University of Maring\'a, 
Maring\'a, Paran\'a, Brazil, 87020900}
\email{fmanatali@uem.br}

\author[D. E. Pelinovsky]{Dmitry E. Pelinovsky}
\address[D. E. Pelinovsky]{Department of Mathematics and Statistics, McMaster University, Hamilton, Ontario, Canada, L8S 4K1}
\email{pelinod@mcmaster.ca}
		
\author[S. Wang]{Shuoyang Wang}
\address[S. Wang]{Department of Mathematics and Statistics, McMaster University, Hamilton, Ontario, Canada, L8S 4K1}
\email{wangs455@mcmaster.ca}

\begin{document}
	
	\maketitle
	

\begin{abstract} 
	We study standing periodic waves modeled by the nonlinear Schr\"{o}dinger equation with the intensity-dependent dispersion coefficient. 
	Spatial periodic profiles are smooth if the frequency of the standing waves 
	is below the limiting frequency, for which the profiles become peaked (piecewise continuously differentiable with a finite jump of the first derivative). We prove that there exist two families of the periodic waves with smooth profiles separated by a homoclinic orbit and the period function (the energy-to-period mapping) is monotonically increasing for the family inside the homoclinic orbit and decreasing for the family outside the homoclinic orbit. This property allows us to derive a sharp criterion for the energetic stability of such standing periodic waves under time evolution if the perturbations are periodic with the same period for both families and, additionally, for the family outside the homoclinic orbit, spatially odd with respect to the half-period. By numerically approximating the sharp stability criterion, we show that both families are energetically stable for small frequencies but become unstable when the frequency approaches the limiting frequency of the peaked waves. 
\end{abstract}

\section{Introduction}

We consider the nonlinear Schr\"odinger (NLS) equation, where the dispersion coefficient depends linearly on the wave intensity. This model in one spatial dimension 
can be written in the normalized form:
\begin{equation}
	iu_t+(1-|u|^2)u_{xx}+|u|^2u=0, 
	\label{NLS-IDD}
	\end{equation}
where $u = u(t,x)$ and $u:\mathbb{R}\times \mathbb{R}\rightarrow \mathbb{C}$. 
We assume that $u(t,\cdot)$ is spatially periodic with the period $L$ for any $t \in \mathbb{R}$. If the dispersion coefficient is constant, the model is equivalent to the cubic 
focusing NLS equation, one of the fundamental models of nonlinear science \cite{Fibich,Kev-Dark-2015}. We refer to (\ref{NLS-IDD}) as {\em the NLS--IDD equation}.

\subsection{Background and motivations}

Mathematical models with the intensity-dependent dispersion terms have been studied in the physics of the coherently prepared multistate atoms \cite{greentree}, quantum well waveguides \cite{koser}, fiber-optics communication systems \cite{OL2020}, and the quantum harmonic oscillators in the presence of
nonlinear effective masses~\cite{chang}. 

The NLS--IDD equation also arises as the continuum limit of the Salerno lattice model ~\cite{salerno}, 
\begin{equation}
i \partial_{\tau} \psi_n + (1-|\psi_n|^2) (\psi_{n+1} + \psi_{n-1}) + \mu |\psi_n|^2 \psi_n = 0,
\label{Salerno}
\end{equation}
where $\mu \in \R$ is the coefficient of the onsite nonlinearity 
and $\psi_n = \psi_n(\tau)$ is the wave function in $(\tau,n) \in \R \times \mathbb{Z}$. 
If $\mu = 2 + h^2$ and $\psi_n(\tau) = e^{2i \tau} u(h^2 \tau, hn)$ with a smooth $u = u(t,x)$, then expanding in powers of the small stepsize $h$ yields the NLS--IDD equation (\ref{NLS-IDD}) from the Salerno model (\ref{Salerno}) at order $\mathcal{O}(h^2)$.

The mathematical analysis of model (\ref{NLS-IDD}) 
without the local cubic term $|u|^2 u$ was developed in 
\cite{RKP}, where it was shown that a continuous family of bright solitons exists among the standing wave solutions. The spatial profiles of bright solitons have two logarithmic singularities for the first derivative and the continuous parameter is given by the distance between the two singularities. The energetic stability of the bright solitons was obtained in \cite{PRK} by using the variational characterization of the singular profiles as minimizers of the mass subject to a fixed energy. Well-posedness of the model was not studied in \cite{PRK,RKP}. 

A similar model without the local cubic term $|u|^2 u$ and with the inverted 
intensity--dependent coefficient $(1-|u|^2)^{-1} u_{xx}$ was considered in \cite{PP24}, where a family of dark solitons (traveling wave solutions) was shown to have smooth spatial profiles and the limiting black solitons (standing wave solutions) were shown to be energetically stable as constrained minimizers 
of the energy subject to fixed mass and momentum. Dark solitons in the quasilinear NLS equations with nonconstant dispersion terms were considered in \cite{Laire1,Laire2,Laire3}. Both bright and dark solitons were also studied in the NLS equations with regularized dispersion terms \cite{Albert,Sparber,Plum}.

The NLS--IDD equation (\ref{NLS-IDD}) was studied in \cite{KPR24}, where the continuous family of bright solitons is parameterized by the frequency of the standing wave solution $u(t,x) = e^{i \omega t} \phi(x)$ with the spatially decaying pofile $\phi$. The profile $\phi$ smooth for $0 < \omega < 1$ and peaked (piecewise continuously differentiable with a single jump of the first derivative) for $\omega = 1$. A sharp criterion for energetic stability of bright solitons with respect to the spatially decaying perturbations in $H^1(\R)$ was obtained in \cite{KPR24} from 
the variational characterization of the smooth profiles as local minimizers of the energy subject to a fixed mass. The sharp criterion is given by the monotone increase of the mass with respect to the frequency, the latter condition is checked numerically. 

Energetic stability is equivalent to the orbital stability if the local 
well-posedness of the NLS-IDD equation (\ref{NLS-IDD}) can be obtained in $H^1(\R)$. However, the state-of-the-art in the well-posedness of quasilinear NLS equations is not yet at the level of $H^1(\R)$. Local well-posedness of the models which include (\ref{NLS-IDD}) was proven in Sobolev spaces of higher regularity \cite{KPV03,Marzuola1,Popen}. More recently, the local well-posedness of quasilinear NLS equations was established in $H^s(\R)$ for $s > 2$ in \cite{Marzuola} and for small data in $H^s(\R)$ for $s > 1$ in \cite{Ifrim}. 
Local well-posedness of quasilinear NLS equations including the NLS--IDD equation (\ref{NLS-IDD}) was also extended to the periodic domain in Sobolev spaces of higher regularity \cite{Feola1,Feola2,Feola}.

{\em The main purpose of this work is to study the energetic stability 
of standing periodic waves with the smooth profiles with respect to periodic perturbations of the same period. }
The periodic spatial domain is more practical for physical experiments modeled by the NLS--IDD equation (\ref{NLS-IDD}). 
The mathematical analysis of stability in the periodic setting introduces additional challenges because the Morse index in the variational characterization may exceed a single negative eigenvalue. We control the Morse index with a precise analysis of the monotonicity of the period function (the energy-to-period mapping). 
Similarly to the scopes of \cite{KPR24}, we obtain a sharp criterion for the energetic stability of the smooth periodic waves as local minimizers of the 
energy in $H^1_{\rm per}$ subject to fixed mass, provided that the mass 
at the periodic wave profile is monotonically increasing with respect to 
frequency $\omega$ for $0 < \omega < 1$. We compute the latter criterion numerically and point out inaccuracies in the previous numerical approximations in \cite{KPR24} performed for the case of bright solitons. These main results of our study are described next.

\subsection{Main results}

We denote the space of square integrable $L$-periodic functions by $L^2_{\rm per}$. For $s\geq0$, the Sobolev space $H^s_{\rm per}$ is the set of periodic distributions such that
$$
\|f\|_{H^s_{\rm per}} :=  \left( \sum_{k=-\infty}^{\infty}(1+|k|^2)^s|\hat{f}(k)|^2 \right)^{1/2} <\infty,
$$
where $\hat{f}$ is the periodic Fourier transform of $f$ (the Fourier series of $f$). The space $H^s_{\rm per}$ is a  Hilbert space with a natural inner product denoted by $\langle \cdot, \cdot \rangle_{H^s_{\rm per}}$. When $s=0$, the space $H^s_{\rm per}$ is isometrically isomorphic to the space  $L^2_{\rm per}$, that is, $L^2_{\rm per}=H^0_{\rm per}$. The norm and inner product in $L^2_{\rm per}$ are denoted by $\|\cdot \|_{L^2_{\rm per}}$ and $\langle \cdot, \cdot \rangle_{L^2_{\rm per}}$.

The time-dependent NLS--IDD equation (\ref{NLS-IDD}) admits the conserved energy $H(u)$ and mass $Q(u)$ given by 
\begin{equation}
H(u) = \int_{\mathbb{T}_L} \left( |u_x|^2+|u|^2+\log(1-|u|^2) \right) dx
\label{Eu}
\end{equation}
and
\begin{equation}
Q(u) = - \int_{\mathbb{T}_L} \log(1-|u|^2)dx, 
\label{Fu}
\end{equation}
where $\int_{\mathbb{T}_L}$ denotes the integral over the periodic domain $\mathbb{T}_L$ with the spatial period $L$, which is independent on the starting point of integration. The conserved quantities are well defined in the set of functions
$$
\mathcal{X} = \left\{ u \in H^1_{\rm per} : \quad \| u \|_{L^{\infty}} < 1 \right\}.
$$
The NLS--IDD equation (\ref{NLS-IDD}) also admits the conserved momentum $P(u)$ if $u \neq 0$, see \cite{KPR24}. Since the momentum does not play any role in our study, we do not introduce it here. 

We consider standing waves of the form $u(t,x)=e^{i\omega t}\phi(x)$, where $\omega$ is the wave frequency. Substituting this ansatz into $(\ref{NLS-IDD})$, we obtain
\begin{equation}
	-(1-\phi^2)\phi''+\omega\phi-\phi^3=0,
	\label{ode}
\end{equation}
which can be rewritten as Newton's equation for a 1D particle in a potential energy $V$:
\begin{equation}
\label{odephi}
\frac{d^2 \phi}{dx^2} = \frac{(\omega - \phi^2)}{1-\phi^2} \phi = -\frac{dV}{d\phi}, \qquad 
V(\phi) = \frac{1}{2} (\omega - \phi^2) + \frac{1}{2} (1-\omega) \log \frac{1 - \omega}{1 - \phi^2}.
\end{equation}
The total energy $E$ of Newton's particle is conserved along every solution 
of (\ref{odephi}):
\begin{equation}
\label{energy}
E(\phi,\phi') = \frac{1}{2} (\phi')^2 + V(\phi).
\end{equation}
The variational characterization of the spatial profile $\phi$ is possible since 
the second-order equation (\ref{odephi}) is the Euler--Lagrange equation 
for the augmented energy functional
\begin{equation}
	G(u) = H(u) + \omega Q(u),
	\label{Gu}
\end{equation}
defined from the conserved energy $H(u)$ and mass $Q(u)$ in (\ref{Eu}) and (\ref{Fu}).
	
The phase portrait in Figure \ref{fig-phase} represents all bounded solutions of 
the system (\ref{odephi}) for $0 < \omega < 1$, see also \cite{KPR24}. There exist two 
families of periodic orbits with smooth profiles separated by a pair of homoclinic orbits. One family is inside one of the two homoclinic orbits with the left (negative) periodic orbits being symmetrically reflected from the right (positive) periodic orbits due to 
the symmetry transformation: $\phi \to -\phi$. The other family is outside the two homoclinic orbits and symmetrically span all four quadrants of the phase plane.

\begin{figure}[htb!]
	\centering
	\includegraphics[width=0.6\textwidth,height=0.3\textheight]{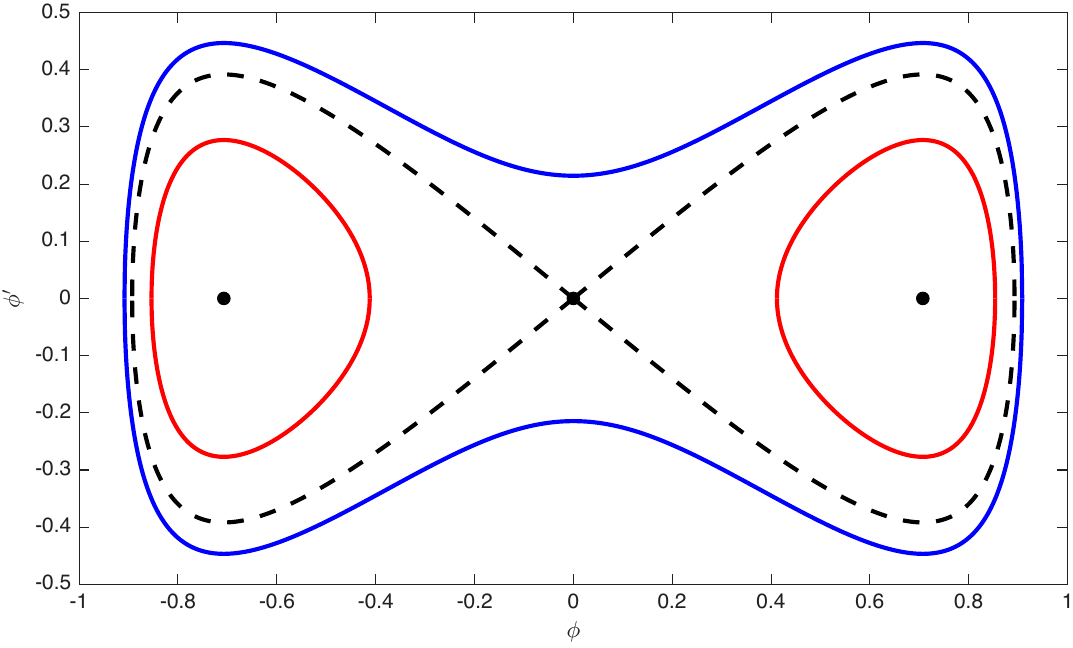} 
	\caption{The phase portrait of system (\ref{odephi}) for $\omega = 0.5$.}
	\label{fig-phase}
\end{figure}

The following theorem summarizes the existence properties of the two families of the periodic orbits. 

\begin{theorem}
	\label{th-existence}
	Fix the spatial period $L > 0$ for the periodic domain $\mathbb{T}_L$ and define 
	$$
	\omega_L = \frac{2 \pi^2}{L^2 + 2 \pi^2}, \quad \Omega_L = -\frac{4 \pi^2}{L^2}.
	$$
	For any $\omega \in (\omega_L,1)$, 
	there exists a periodic orbit of system (\ref{odephi}) with the smooth profile $\phi$ satisfying
	\begin{equation}
	\label{even-wave}
	\left\{ \begin{array}{lll} 0 < \phi(x) < 1, \quad & & \forall\ x \in \mathbb{T}_L,\\ \phi(x-x_0) = \phi(x_0-x), \quad & x_0 \in \mathbb{T}_L, \quad & \forall\ x \in \mathbb{T}_L.
	\end{array} \right.
	\end{equation}
	For any $\omega \in (\Omega_L,1)$, 
	there exists a periodic orbit of system (\ref{odephi}) with the smooth profile $\phi$ satisfying 
	\begin{equation}
	\label{odd-wave}
		\left\{ \begin{array}{lll} 
	-1 < \phi(x) < 1, \quad & & \forall\ x \in \mathbb{T}_L, \\
\phi(x-x_0) = -\phi(x_0-x) = \phi\left(\frac{L}{2} - x+x_0 \right), \quad & 
x_0 \in \mathbb{T}_L, \quad & \forall\ x \in \mathbb{T}_L.
	 	\end{array} \right.
	\end{equation}
	For both families, $x_0$ is an arbitrary translational parameter along the periodic orbit. 
\end{theorem}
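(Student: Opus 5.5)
The plan is a phase-plane analysis of the planar Hamiltonian system (\ref{odephi}) with energy (\ref{energy}), followed by a study of the period function near its endpoints.

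\emph{Phase portrait.} For $0<\omega<1$ the potential $V$ is even and smooth on $(-1,1)$, and $V(\phi)\to+\infty$ as $|\phi|\to 1$ because $-\tfrac12(1-\omega)\log(1-\phi^2)\to+\infty$. Since $V'(\phi)=-\phi(\omega-\phi^2)/(1-\phi^2)$, the critical points are $\phi=0$ and $\phi=\pm\sqrt{\omega}$. At $\phi=0$ we have $V''(0)=-\omega<0$, a saddle with $V(0)=\tfrac12\omega+\tfrac12(1-\omega)\log(1-\omega)>0$. At $\pm\sqrt\omega$ we have $V''(\pm\sqrt\omega)=2\omega/(1-\omega)>0$, two centers with $V(\pm\sqrt\omega)=0$. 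The level sets $E=c$ with $0<c<V(0)$ therefore give periodic orbits inside the homoclinic loops; these are positive (or, by $\phi\to-\phi$, negative) and satisfy $0<\phi<1$. The level sets $c>V(0)$ give periodic orbits encircling all three equilibria with $-1<\phi<1$. Smoothness holds because each orbit stays in a compact subset of $(-1,1)$, where the vector field is analytic. The symmetries in (\ref{even-wave}) and (\ref{odd-wave}) follow from reversibility: $V$ is even, so $x\mapsto -x$ and $\phi\mapsto-\phi$ map solutions to solutions. Choosing $x_0$ at a maximum of $\phi$ gives evenness. For the outer orbits, choosing $x_0$ at a zero of $\phi$ gives oddness, and the half-period reflection $\phi(L/2-x+x_0)=\phi(x-x_0)$ follows from evenness about the maximum at the quarter period.

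\emph{Period function ranges.} Denote the periods by $T_1(c)$ for $c\in(0,V(0))$ and $T_2(c)$ for $c>V(0)$, with
\[
T(c)=2\int_{\phi_-}^{\phi_+}\frac{d\phi}{\sqrt{2(c-V(\phi))}}.
\]
Both are continuous in $c$, and $T_1,T_2\to\infty$ as $c\to V(0)$ because of the logarithmic divergence near the saddle. As $c\to0$, $T_1\to 2\pi/\sqrt{V''(\sqrt\omega)}=\pi\sqrt{2(1-\omega)/\omega}$. Setting this equal to $L$ gives $\omega=2\pi^2/(L^2+2\pi^2)=\omega_L$, and the condition $L>\pi\sqrt{2(1-\omega)/\omega}$ is exactly $\omega>\omega_L$. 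By the intermediate value theorem, every $L$ in $(\lim_{c\to0}T_1,\infty)$ is attained, which proves the first family for $\omega\in(\omega_L,1)$. The monotonicity of the period claimed in the abstract is not needed for existence.

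\emph{Outer family; main obstacle.} As $c\to\infty$, the turning points approach $\pm1$. I would substitute $\phi=\pm\sqrt{1-e^{-s}}$, or estimate directly: where $1-\phi^2$ is small, $c-V\approx c+\tfrac12(1-\omega)\log(1-\phi^2)$, and I expect the time spent in the boundary layer to vanish as $c\to\infty$. The bulk contributes $O(c^{-1/2})$. Hence $T_2(c)\to0$, so all periods in $(0,\infty)$ are attained for each $\omega\in(0,1)$. The hardest part is $\omega\in(\Omega_L,0]$, where the equilibria $\pm\sqrt\omega$ disappear. For $\omega<0$ the origin is a center with $V''(0)=-\omega>0$, and all orbits encircle it. The small-amplitude period limit is $2\pi/\sqrt{-\omega}$, and setting this equal to $L$ gives $\omega=-4\pi^2/L^2=\Omega_L$. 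The period ranges over $(0,2\pi/\sqrt{-\omega})$, which contains $L$ iff $\omega>\Omega_L$. At $\omega=0$ the origin is a degenerate center with $V\sim\phi^4/4$, so the period tends to $\infty$ at small amplitude and again every $L$ is attained. These orbits are odd about their zeros by the same reversibility argument. The main obstacle is the uniform justification that $T_2(c)\to0$ as $c\to\infty$, which requires controlling the integral near the singular endpoints $\pm1$; I would split the integral at $1-\phi^2=c^{-1}$ and bound each piece.
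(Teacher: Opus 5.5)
Your argument is correct, but it takes a different route from the paper.

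\textbf{The paper's route.} The paper proves existence functional-analytically. It applies a Crandall--Rabinowitz-type local bifurcation theorem to $-(1-g^2)g''+\lambda g-g^3=0$ in $H^2_{\rm per,e}$ (resp.\ $H^2_{\rm per,o}$). The linearization at the constant $\sqrt{\lambda}$ (resp.\ at $0$) acquires the kernel $\cos(2\pi x/L)$ at $\lambda=\omega_L$ (resp.\ $\sin(2\pi x/L)$ at $\lambda=\Omega_L$). The paper then computes $\omega_2>0$ to see that the branch enters $\omega>\omega_L$ (resp.\ $\omega>\Omega_L$). Finally it invokes the Buffoni--Toland global bifurcation theorem, with a compactness argument, to continue the branch to all of $(\omega_L,1)$ (resp.\ $(\Omega_L,1)$).

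\textbf{Your route.} You instead make rigorous the phase-portrait picture that the paper itself calls ``obvious.'' For each fixed $\omega$ you need only continuity of the period on each family, its limits at the two ends of the energy range, and the intermediate value theorem. In this approach $\omega_L$ and $\Omega_L$ appear as the frequencies at which the linearized period at the center, $\pi\sqrt{2(1-\omega)/\omega}$ or $2\pi/\sqrt{-\omega}$, equals $L$, rather than as kernel conditions.

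\textbf{What each route buys.} Your route is more elementary and self-contained. It needs no Fredholm-index or compactness verification and no monotonicity of the period. By contrast, the paper's global step asserts that $\sqrt{\omega_L}$ is the only positive even $L$-periodic solution at $\omega=\omega_L$, which is in effect a period-function statement. What your route does not give is a continuous branch $\omega\mapsto\phi$ or uniqueness for fixed $\omega$. The paper recovers both later from the monotonicity of $T$ and the implicit function theorem.

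\textbf{Two small remarks.}
\begin{enumerate}
\item Without monotonicity, your ``iff'' claims about the range of the period hold only in the ``if'' direction. That direction is all that existence requires.
\item Your splitting for $T_2(c)\to 0$ does close.
\begin{itemize}
\item On $\{1-\phi^2\ge c^{-1}\}$ one has $c-V(\phi)\ge c-C-\tfrac12(1-\omega)\log c\ge c/2$ for large $c$, so this piece is $O(c^{-1/2})$.
\item On the layer $[\phi_*,M]$ with $\phi_*=\sqrt{1-c^{-1}}$, $V$ is convex (since $V''>0$ near $\pm1$ for $\omega<1$). Hence $V(M)-V(\phi)\ge V'(\phi_*)(M-\phi)$ with $V'(\phi_*)\sim(1-\omega)c$ and $M-\phi_*\le c^{-1}$, so this piece is $O(c^{-1})$.
\end{itemize}
This is more careful than the paper's one-line dominated-convergence justification of the same limit.
\end{enumerate}
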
 

\begin{remark}
For simplicity of terminology, we call the family of periodic orbits inside the homoclinic orbits satisfying (\ref{even-wave}) as the even waves and the family of periodic orbits outside the homoclinic orbits satisfying (\ref{odd-wave}) as the odd waves. Figure \ref{fig-phase} shows a former periodic orbit in blue and a latter periodic orbit in red together with its symmetric reflection. The homoclinic orbits are shown by dashed black lines.
\end{remark}
	
Each family of periodic orbits correspond to the energy level $E(\phi,\phi') = \mathcal{E}$ given by the first invariant (\ref{energy}). For $\omega \in (0,1)$, the family of even waves satisfying 
(\ref{even-wave}) corresponds to $\mathcal{E} \in (0,\mathcal{E}_{\omega})$ and 
the family of odd waves satisfying (\ref{odd-wave}) corresponds to $\mathcal{E} \in (\mathcal{E}_{\omega},\infty)$, where 
$$
\mathcal{E}_{\omega} = V(0) = \frac{1}{2} \omega + \frac{1}{2} (1-\omega) \log(1-\omega)
$$
is the energy level corresponding to the homoclinic orbits for the saddle point $(0,0)$. If $\omega \in (-\infty,0)$, the family of odd waves satisfying (\ref{odd-wave}) correspond to $\mathcal{E}\in (\mathcal{E}_{\omega},\infty)$, 
where $\mathcal{E}_{\omega} = V(0)$ is the energy level corresponding to the center point $(0,0)$. For each energy level $E(\phi,\phi') = \mathcal{E}$, 
we can define the period function $T(\mathcal{E},\omega)$ by 
\begin{equation}
\label{period-function}
T(\mathcal{E},\omega) = \oint \frac{d\phi}{\sqrt{2 (\mathcal{E} - V(\phi))}}, 
\end{equation}
where $\oint$ corresponds to the line integral taken along the closed periodic orbit. 
Figure \ref{fig-period} shows the dependence of $T(\mathcal{E},\omega)$ versus 
$\mathcal{E}$ for fixed values of $\omega \in (0,1)$, where the divergence of $T(\mathcal{E},\omega)$ corresponds 
to the homoclinic orbit at $\mathcal{E} = \mathcal{E}_{\omega}$. The figure suggests that, for $\omega \in (0,1)$, the mapping $\mathcal{E}\to T(\mathcal{E},\omega)$ is monotonically increasing for the even wave and is monotonically decreasing for the odd wave. These properties 
are formulated in the following theorem.

\begin{figure}[htb!]
	\centering
	\includegraphics[width=0.8\textwidth,height=0.4\textheight]{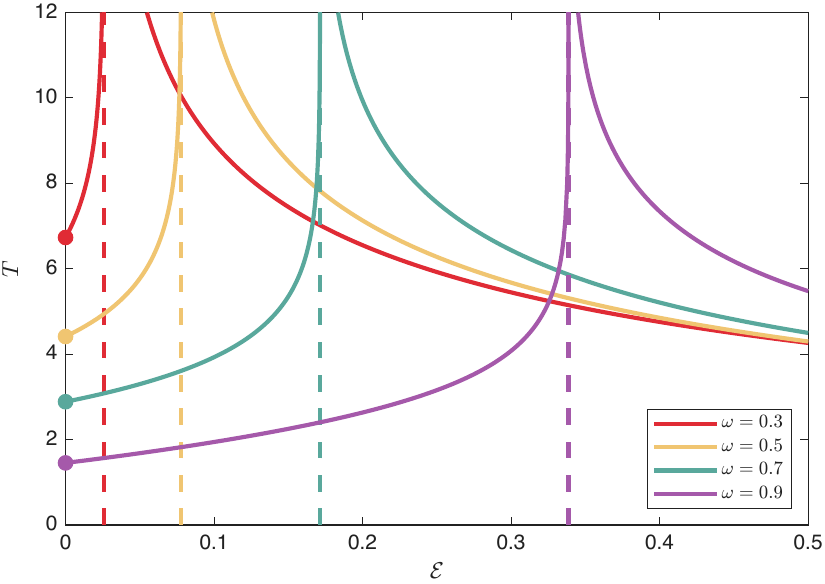} 
	\caption{The period function $T(\mathcal E,\omega )$ versus $\mathcal{E}$ for fixed values of $\omega$. The dots denote the cutoff value of $\mathcal{E}$ satisfying $T(\mathcal{E},\omega) = \pi \sqrt{2(1-\omega)/\omega}$ for $\omega = \omega_L$. The vertical lines show divergence of $T(\mathcal{E},\omega)$ at 
	$\mathcal{E} = \mathcal{E}_{\omega}$.}
		\label{fig-period}
\end{figure}

\begin{theorem}
	\label{th-period}
	The period function $T = T(\mathcal{E},\omega)$ in (\ref{period-function}) 
	is a $C^1$ function of $\mathcal{E} \in (0,\infty) \backslash \mathcal{E}_{\omega}$ if $\omega \in (0,1)$ 
	and $\mathcal{E} \in (\mathcal{E}_{\omega},\infty)$ if $\omega \in (-\infty,0)$. For any $\omega \in (0,1)$, the mapping $(0,\mathcal{E}_{\omega}) \ni \mathcal{E} \to T(\mathcal{E},\omega)$ is monotonically increasing. For any $\omega \in (-\infty,1)$, the mapping $(\mathcal{E}_{\omega},\infty) \ni \mathcal{E} \to T(\mathcal{E},\omega)$ is monotonically decreasing. 
\end{theorem}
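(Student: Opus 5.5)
Our plan is to reduce the monotonicity of the period function to a sign condition on the potential $V$ in (\ref{odephi}) and to use a Chicone-type criterion. The natural tool for the even family is Chicone's theorem: if $V$ has a nondegenerate minimum at $\phi_0$ and
\[
\left(\frac{V}{(V')^2}\right)'' > 0
\]
on the relevant interval (with $V$ shifted so that $V(\phi_0)=0$), then the period function is increasing. For $\omega \in (0,1)$ the center is at $\phi_0=\sqrt{\omega}$ with $V(\sqrt{\omega})=0$, which is why the even family corresponds to $\mathcal{E}\in(0,\mathcal{E}_\omega)$. We also substitute $s=\phi^2$ before doing the computation. Then
\[
V = \tfrac12(\omega-s) + \tfrac12(1-\omega)\log\frac{1-\omega}{1-s}, \qquad \frac{dV}{ds} = -\frac{\omega-s}{2(1-s)},
\]
so the problem becomes a one-variable inequality on $s\in(0,1)$. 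Since Chicone's criterion is not invariant under this change of variables, we would instead use the equivalent formulation through the function
\[
h(\phi)=\frac{\phi-\phi_0}{\sqrt{V(\phi)}}\ \text{(signed)}.
\]
Monotonicity of $T$ follows if $h$ has the appropriate convexity, after writing $T(\mathcal{E})=\int_{-1}^1 \frac{\mathcal{E}^{\cdots}}{\cdots}$ via the substitution $V=\mathcal{E}\sin^2\theta$. Differentiability $C^1$ in $\mathcal{E}$ away from $\mathcal{E}_\omega$ comes from the same representation
\[
T(\mathcal{E}) = \sqrt{2}\int_{-\pi/2}^{\pi/2} \sqrt{\mathcal{E}}\, g'\big(\sqrt{\mathcal{E}}\sin\theta\big)\, d\theta,
\]
where $g$ is the inverse of $\phi \mapsto \pm\sqrt{V}$. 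Smoothness of $g$ requires only $V'\neq 0$ away from the equilibria, which holds on each family's range.

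For the odd family the orbit surrounds three equilibria when $\omega\in(0,1)$, so Chicone's criterion does not apply directly. Instead we use the symmetry $\phi\to-\phi$ and the fact that the quarter-period runs from $\phi=0$, where $(\phi')^2=2(\mathcal{E}-V(0))$, to the turning point $\phi=a$ with $V(a)=\mathcal{E}$. This gives
\[
T = 4\int_0^{a}\frac{d\phi}{\sqrt{2(V(a)-V(\phi))}} .
\]
Since $V(\phi)\to+\infty$ as $|\phi|\to1$ and $V$ is increasing on $(\sqrt\omega,1)$, the parametrization by $a\in(a_\omega,1)$ is monotone in $\mathcal{E}$. Writing $\phi=a y$ gives
\[
T=4\int_0^1 \frac{a\,dy}{\sqrt{2(V(a)-V(ay))}} ,
\]
and differentiating in $a$ shows that $dT/da<0$ provided
\[
2\big(V(a)-V(ay)\big) - a\big(V'(a)-yV'(ay)\big) < 0 \quad \text{for } y\in(0,1).
\]
This integrand sign condition is what we would try to verify. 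In terms of $W(\phi)=2V(\phi)-\phi V'(\phi)$, it reads $W(a)<W(ay)$, that is, $W$ is decreasing on $(0,1)$. Computing,
\[
\phi V'(\phi)=-\frac{\phi^2(\omega-\phi^2)}{1-\phi^2},
\qquad
W'(\phi)=V'(\phi)-\phi V''(\phi).
\]
This gives an explicit rational expression in $s=\phi^2$ which should have a definite sign, roughly $-\phi^3(\ldots)/(1-\phi^2)^2$. The same computation covers $\omega<0$.

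The main obstacle is the even family. There the endpoint substitution around the center is not symmetric, so a definite sign requires the Chicone-type inequality for the logarithmic potential. We would first try the substitution $\phi^2=s$ and Schaaf-type criteria, and fall back on proving positivity of an explicit function of $(s,\omega)$ by expanding the logarithm in a series.
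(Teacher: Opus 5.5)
\textbf{Odd family.} This part is correct and is the paper's argument, written in $\phi$ instead of $t=\phi^2$. With your $W(\phi)=2V(\phi)-\phi V'(\phi)$ one gets exactly
\[
W'(\phi)=V'(\phi)-\phi V''(\phi)=-\frac{2(1-\omega)\phi^3}{(1-\phi^2)^2}<0,\qquad \phi\in(0,1),
\]
for every $\omega<1$. Hence $dT/da<0$ and $dT/d\mathcal{E}=V'(a)^{-1}\,dT/da<0$. Writing $V(\phi)=\widetilde W(\phi^2)$, your $W$ equals $-2\big(t\widetilde W'(t)-\widetilde W(t)\big)$, so its decrease is precisely the paper's $F'(t)=t\widetilde W''(t)>0$. (A minor slip: the factor $\sqrt{\mathcal{E}}$ in your $\theta$-representation is spurious; it should read $T=\sqrt2\int_{-\pi/2}^{\pi/2}g'(\sqrt{\mathcal{E}}\sin\theta)\,d\theta$.)

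\textbf{Even family: this is the genuine gap.} The even family is the substantive half of the theorem. You invoke Chicone's criterion, which is the paper's tool, but you never verify $I''>0$ for $I=V/(V')^2$ on $(0,1)$; the proposal ends with what you ``would try''.

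No change of criterion (your $h$, or Schaaf) is needed. Since $V,V''$ are even and $V',V'''$ are odd, $I''(\phi)=P(\phi^2)/[(V')^4(1-\phi^2)^4]$ with
\[
P(t)=3t(\omega-t)^2A(t)+G(t)B(t),\qquad G=3A^2+2(1-\omega)t(3+t)(\omega-t),
\]
where $A=\omega+(\omega-3)t+t^2$ and $B=\omega-t+(1-\omega)\log\frac{1-\omega}{1-t}$. So the criterion, computed in $\phi$, is already a one-variable inequality in $t$.

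What makes the inequality hard:
\begin{itemize}
\item $P$ has a zero of exact order four at $t=\omega$, matching the zero of $(V')^4$.
\item $A(\omega)=-2\omega(1-\omega)<0$, so the polynomial term is negative near $t=\omega$. The two terms of $P$ cancel at orders $(t-\omega)^2$ and $(t-\omega)^3$.
\item Your fallback of expanding the logarithm does not work as stated. In the variable $x=(t-\omega)/(1-\omega)$ adapted to this zero, $B=(1-\omega)\sum_{n\ge 2}x^n/n$. But for $t\in(0,\omega)$ one has $x\in(-\omega/(1-\omega),0)$, which leaves the disk of convergence once $\omega>1/2$.
\end{itemize}

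The paper closes the gap in two steps. First it computes $P'(\omega)=P''(\omega)=P'''(\omega)=0$ and $P^{(4)}(\omega)=4(9-6\omega-\omega^2)/(1-\omega)>0$. Then it writes
\[
P'(t)=\Big[B(t)-\frac{(\omega-t)^2}{2(1-t)}\Big]G'(t)-\frac{(\omega-t)^3}{1-t}\big[6+(1-\omega)t-6t^2\big].
\]
It uses the integral bounds $\frac{(\omega-t)^2}{2(1-t)}\le B\le\frac{(\omega-t)^2}{2(1-\omega)}$ on $(0,\omega)$ and $B\le\frac{(\omega-t)^2}{2(1-t)}$ on $(\omega,1)$. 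The series is used only on $(\omega,1)$, where $x\in(0,1)$. This gives $P'<0$ on $(0,\omega)$ and $P'>0$ on $(\omega,1)$. Without an argument of this kind, the claimed increase of $T$ on $(0,\mathcal{E}_\omega)$ remains unproved.
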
 

Due to smoothness and monotonicity of the period function in Theorem \ref{th-period}, one can uniquely define the energy level $\mathcal{E} = \mathcal{E}_L(\omega)$ for any spatial period $L > 0$ in Theorem \ref{th-existence} from the root of $T(\mathcal{E}_L(\omega),\omega) = L$, where $\omega \in (\omega_L,1)$ for the even wave and $\omega \in (\Omega_L,1)$ for the odd wave. Furthermore, the mappings $(\omega_L,1) \ni \omega \to \mathcal{E}_L(\omega)$ and $(\Omega_L,1) \ni \omega \to \mathcal{E}_L(\omega)$ are $C^1$. These smoothness properties play a central role in the energetic stability analysis of the periodic waves.

The Hessian operator $\mathcal{L} = H''(\phi) + \omega Q''(\phi)$ 
of the augmented energy functional (\ref{Gu}) computed at the critical point with the profile $\phi$ is defined as
	\begin{equation}
	\label{matrixop}
		\mathcal{L} = \left(\begin{array}{cccc} \mathcal{L}_{+}& 0\\\\
							0 & \mathcal{L}_{-}\end{array}\right), \quad 
\begin{array}{l} 
\mathcal{L}_+ = -\partial_x^2+1+(\omega-1)\frac{1+\phi^2}{(1-\phi^2)^2}, \\
\mathcal{L}_- = -\partial_x^2+1+(\omega-1)\frac{1}{(1-\phi^2)}, 
\end{array}
	\end{equation}
For simplicity of notations, we set
$$
\mathbb{H}^s_{\rm per}:= H^s_{\rm per} \times H^s_{\rm per}, \quad 
\mathbb{L}^2_{\rm per}:= L^2_{\rm per} \times L^2_{\rm per},
$$
endowed with their usual norms and scalar products. When necessary and since $\mathbb{C}$ can be identified with $\mathbb{R}^2$, notations above can also be used for complex-valued functions in the following sense: for $f\in \mathbb{H}_{\rm per}^s$ we have $f=f_1+if_2$ with $f_1, f_2 \in H_{\rm per}^s$. 

By studying the spectrum of $\mathcal{L}$ in $\mathbb{L}^2_{\rm per}$, we obtain the sharp criterion for the energetic stability of the periodic waves with the spatial profile $\phi$ stated in the following theorem. 

\begin{theorem}
	\label{th-stability}
	Fix the spatial period $L > 0$ as in Theorem \ref{th-existence} and set $x_0 = 0$. The profile $\phi \in H^1_{\rm per}$ is a $C^1$ function of $\omega$ for the even wave in $(\omega_L,1)$ and for the odd wave in $(\Omega_L,1)$. 
	For any $\omega \in (\omega_L,1)$, the even wave with the profile $\phi$ is a local minimizer of energy $H(u)$ for a fixed mass $Q(u)$ in $H^1_{\rm per}$, which is degenerate only due to translational and rotational symmetries, if and only if the mapping 
	$\omega \to Q(\phi)$ is monotonically increasing. 
	For any $\omega \in (\Omega_L,1)$, the odd wave with the profile $\phi$ is a local minimizer of energy $H(u)$ for a fixed mass $Q(u)$ in $\mathcal{Y}\subset H^1_{\rm per}$, where 
\begin{equation}
\label{Y-space}
	\mathcal{Y} = \left\{ u \in H^1_{\rm per} : \quad u\left(\frac{L}{2}-x\right) = -u\left(x - \frac{L}{2}\right), \quad \forall\ x \in \mathbb{T}_L \right\},
\end{equation}
 	which is only degenerate by the rotational symmetry, if and only if the mapping 
	$\omega \to Q(\phi)$ is monotonically increasing. 
\end{theorem}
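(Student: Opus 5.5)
The plan follows the standard Grillakis--Shatah--Strauss / Vakhitov--Kolokolov scheme: count the negative eigenvalues and the kernel of $\mathcal{L}$ in $\mathbb{L}^2_{\rm per}$ (in $\mathcal{Y}\times\mathcal{Y}$ for the odd wave), and then reduce the constrained minimization to the sign of $\frac{d}{d\omega}Q(\phi)$.

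\emph{Smoothness in $\omega$.} By Theorem \ref{th-period}, $T$ is $C^1$ in $\mathcal{E}$ with $\partial_{\mathcal{E}}T\neq 0$ on each branch, and it is also $C^1$ in $\omega$ since $V$ depends smoothly on $\omega$. The implicit function theorem applied to $T(\mathcal{E},\omega)=L$ therefore gives $\mathcal{E}_L(\omega)\in C^1$. The profile with $x_0=0$ is determined by its initial data $(\phi(0),0)$, where $\phi(0)$ is a turning point depending $C^1$ on $(\mathcal{E},\omega)$; for the odd wave we may instead shift so that the data are $(0,\phi'(0))$ with $\phi'(0)=\sqrt{2(\mathcal{E}-V(0))}$. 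Smooth dependence of ODE solutions on parameters then yields $\omega\mapsto\phi\in H^1_{\rm per}$ of class $C^1$.

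\emph{Spectrum of $\mathcal{L}_-$.} Since $\mathcal{L}_-\phi=0$ by (\ref{ode}), and $\phi>0$ for the even wave, $\phi$ is the ground state: $\mathcal{L}_-\ge 0$ with kernel spanned by $\phi$. For the odd wave, $\phi$ has exactly two zeros per period, so $0$ is either the second or the third eigenvalue in $L^2_{\rm per}$. Restricting to $\mathcal{Y}$, the symmetry decomposes the problem into Dirichlet problems on half-periods, on which $\phi$ has one sign. Hence $\mathcal{L}_-\ge0$ on $\mathcal{Y}$ with kernel spanned by $\phi$, which accounts for the rotational symmetry.

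\emph{Spectrum of $\mathcal{L}_+$ (main step).} Differentiating the ODE in $x$ gives $\mathcal{L}_+\phi'=0$. For the even wave $\phi'$ has two zeros per period, so $0$ is the second or third eigenvalue. The standard period-function argument (Neves / Natali--Pelinovsky) shows that the number of negative eigenvalues is $1$ when $\partial_{\mathcal{E}}T>0$ and $2$ when $\partial_{\mathcal{E}}T<0$, and that $\ker\mathcal{L}_+=\mathrm{span}\{\phi'\}$ whenever $\partial_{\mathcal{E}}T\ne0$. Concretely, I would construct the second solution of $\mathcal{L}_+v=0$ by differentiating the family with respect to $\mathcal{E}$ at fixed $\omega$. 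Its failure to be $L$-periodic is measured by $\partial_{\mathcal{E}}T$, and the sign of this quantity decides where $0$ sits in the spectrum via Floquet theory and the Hill discriminant. Here one must rewrite the quasilinear equation as a semilinear Hill equation, which is already the form $\mathcal{L}_+$ takes. Theorem \ref{th-period} then gives $n(\mathcal{L}_+)=1$ for even waves in $H^1_{\rm per}$. For odd waves, $\partial_{\mathcal{E}}T<0$ gives $n(\mathcal{L}_+)=2$ in $L^2_{\rm per}$. Restricting to $\mathcal{Y}$ removes $\phi'$, which is even about the relevant point, along with one negative eigenfunction of the wrong symmetry. What remains is one negative eigenvalue and trivial kernel on $\mathcal{Y}$, consistent with the loss of translational degeneracy. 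Verifying this parity bookkeeping for the odd wave is the step I expect to be most delicate: I would count nodes of the Dirichlet/Neumann eigenfunctions on the quarter-period.

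\emph{Constrained minimization.} Differentiating (\ref{ode}) in $\omega$ gives $\mathcal{L}_+\partial_\omega\phi=-\frac{\phi}{1-\phi^2}=-\tfrac12 Q'(\phi)$, and $\partial_\omega\phi\in H^1_{\rm per}$ lies in $\mathcal{Y}$ for odd waves. With one negative eigenvalue, $\mathcal{L}_+$ restricted to $\{Q'(\phi)\}^\perp$ is nonnegative, with kernel only from the symmetries, if and only if $\langle\mathcal{L}_+^{-1}Q'(\phi),Q'(\phi)\rangle<0$. This quantity equals $-\tfrac{1}{2}\langle\partial_\omega\phi,Q'(\phi)\rangle\cdot 2 = -\frac{d}{d\omega}Q(\phi)$ up to a positive factor. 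So the condition is equivalent to $\frac{d}{d\omega}Q(\phi)>0$ and fails otherwise, since a negative direction then exists. A standard coercivity argument in $H^1_{\rm per}$ converts this into local minimization modulo symmetries, which completes both statements.
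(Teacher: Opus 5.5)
Your proposal is correct and follows essentially the same route as the paper. You obtain the Morse and nullity indices of $\mathcal{L}_\pm$ from the sign of $\partial_{\mathcal{E}}T$ via the Floquet argument of Theorem \ref{teo12}, then apply the Vakhitov--Kolokolov test $\langle \mathcal{L}_+^{-1}\phi_0,\phi_0\rangle_{L^2_{\rm per}} = -\frac{1}{2}\frac{d}{d\omega}Q(\phi)$ coming from $\mathcal{L}_+\frac{d\phi}{d\omega}=-\phi_0$, and treat the odd wave in the odd subspace $\mathcal{Y}$, where $\phi'$ and the positive (hence even) ground state drop out. The differences are minor: you get $C^1$ dependence on $\omega$ from ODE parameter dependence and the implicit function theorem for $T(\mathcal{E},\omega)=L$ rather than from the implicit function theorem in $H^2_{\rm per,e}$ or $H^2_{\rm per,o}$, you handle $\mathcal{L}_-|_{\mathcal{Y}}$ directly as a Dirichlet problem on the half-period, and the parity step you flag as delicate is closed in the paper by the test function $\phi\in\mathcal{Y}$, for which $\langle \mathcal{L}_+\phi,\phi\rangle_{L^2_{\rm per}} = 2(\omega-1)\int_{\mathbb{T}_L}\frac{\phi^4}{(1-\phi^2)^2}\,dx<0$.
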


By using accurate numerical approximations based on the first invariant (\ref{energy}) and the period function (\ref{period-function}), we can compute solutions of the implicit equation $T(\mathcal{E}_L(\omega),\omega) = L$ for a fixed spatial period $L > 0$ and the approximations of the spatial profile $\phi$ of the periodic wave. 

Figure \ref{fig-even} shows the corresponding results 
for the even wave satisfying (\ref{even-wave}) with $x_0 = 0$. The left panel plots $\tilde{\mathcal{E}}_L(\omega) := \mathcal{E}_L(\omega) - \mathcal{E}_{\omega}$ versus $\omega$ in $(\omega_L,1)$ for $L = 2\pi,3\pi,4\pi$ 
and the right panel shows the spatial profile $\phi = \phi(x)$ versus $x$ 
for $L = 4\pi$ and $\omega = 0.3, 0.6, 0.9$. Numerical inaccuracies in the computations occur near $\omega = 1$ and the end points in the numerical data on the left panel are shown by solid dots. The spatial profile of the even periodic wave becomes peaked as $\omega \to 1$. Solving (\ref{ode}) for $\omega = 1$ yields the peaked profile
\begin{equation}
\label{phi-limeven}
\omega = 1 : \quad \phi(x) =  \frac{\cosh \left (\frac{L}{2} - |x| \right)}{\cosh \left (\frac{L}{2} \right )} , \quad \quad x \in \left [-\frac{L}{2}, \frac{L}{2} \right],
\end{equation}
which is shown on the right panel by dashed line. 
The corresponding energy level can be computed as 
\begin{equation}\label{E-limeven}
\mathcal{E}_L(\omega = 1) = - \frac{1}{2\cosh^2 \left(\frac{L}{2}\right)},
\end{equation}
which is shown on the left panel by open dots. An interpolation between 
the right solid dot and the open dot for (\ref{E-limeven}) is shown by 
dotted line.

\begin{figure}[htb!]
	\centering
	\includegraphics[width=0.94\textwidth,height=0.35\textheight]{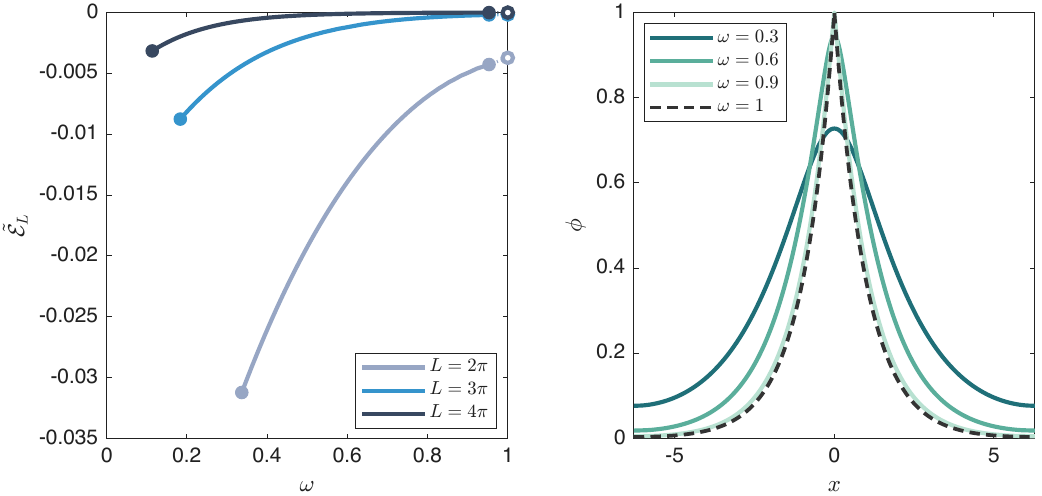} 
	\caption{Numerical approximations for the even waves satisfying (\ref{even-wave}) with $x_0 = 0$. Left: the dependence of 
		$\tilde{\mathcal{E}}_L$ versus $\omega$ for $L = 2\pi,3\pi,4\pi$. Right: the spatial profile $\phi$ versus $x$ for $\omega = 0.3,0.6,0.9$ and $L = 4\pi$. }
	\label{fig-even}
\end{figure}

Figure \ref{fig-odd} shows the corresponding results 
for the odd wave satisfying (\ref{odd-wave}) with $x_0 = 0$ for $\omega \in (\Omega_L,1)$ with $\Omega_L < 0$. We note the non-monotone dependence of $\tilde{\mathcal{E}}_L(\omega) := \mathcal{E}_L(\omega) - \mathcal{E}_{\omega}$ versus $\omega$ on the left panel, which is not an obstacle to our analysis.  
The spatial profile of the periodic wave becomes peaked as $\omega \to 1$. Solving (\ref{ode}) for $\omega = 1$ yields the odd spatial profile in the form:
\begin{equation}\label{phi-limodd}
\omega = 1 : \quad  \phi(x) = \left\{
\begin{array}{ll}
\displaystyle -\frac{\sinh \left ( \frac{L}{2} + x  \right )}{\sinh \left (\frac{L}{4} \right )} &   x \in \left [ -\frac{L}{2},  -\frac{L}{4} \right] \\[10pt]
\displaystyle \frac{\sinh x}{\sinh \left (\frac{L}{4} \right )} & x \in \left [-\frac{L}{4}, \frac{L}{4} \right] \\[12pt]
\displaystyle \frac{\sinh \left ( \frac{L}{2}-x \right )}{\sinh \left (\frac{L}{4} \right )}  &  x \in \left [ \frac{L}{4}, \frac{L}{2} \right]
\end{array}, 
\right.
\end{equation}
which is shown on the right panel by dashed line. 
The corresponding energy level can be computed as 
\begin{equation}\label{E-limodd}
\mathcal{E}_L(\omega = 1) = \frac{1}{2\sinh^2 \left(\frac{L}{2}\right)},
\end{equation}
which is shown on the left panel by open dots. The end points in the numerical data on the left panel are shown by solid dots.  An interpolation between 
the right solid dot and the open dot for (\ref{E-limodd}) is shown by 
dotted line.

\begin{figure}[htb!]
	\centering
	\includegraphics[width=0.94\textwidth,height=0.35\textheight]{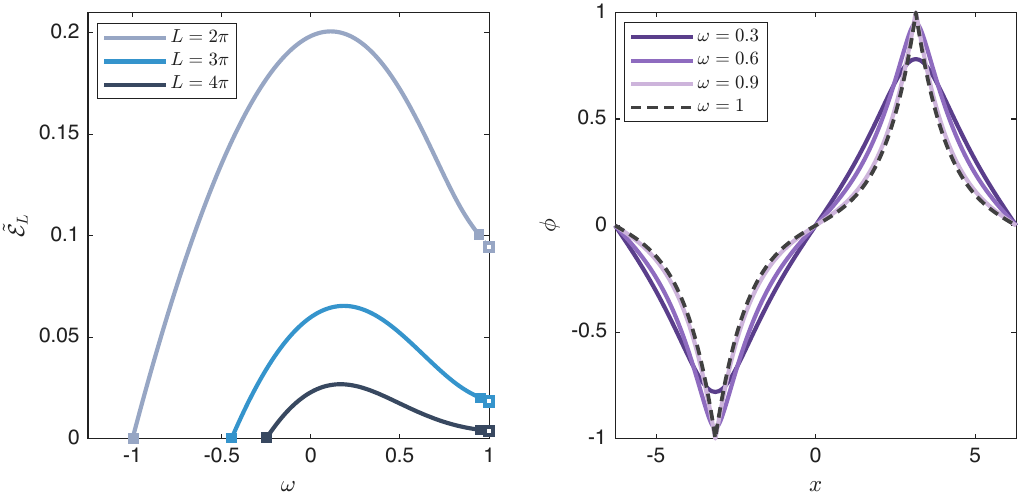} 
	\caption{Numerical approximations for the odd waves satisfying (\ref{odd-wave}) with $x_0 = 0$. Left: the dependence of $\tilde{\mathcal{E}}_L$ versus $\omega$ for $L = 2\pi,3\pi,4\pi$. Right: the spatial profile $\phi$ versus $x$ for $\omega = 0.3,0.6,0.9$ and $L = 4\pi$. }
	\label{fig-odd}
\end{figure}

By using the numerical approximation of the spatial profile $\phi$, 
we can also compute the mass $Q(\phi)$ for a fixed spatial period $L > 0$ 
and plot it versus $\omega$ to verify the sharp criterion for the energetic 
stability of the periodic waves given by Theorem \ref{th-stability}. Figure \ref{fig-mass} plots $Q(\phi)$ versus 
$\omega$ for $L = 2\pi,3\pi,4\pi$. The dashed line shows the dependence 
of $Q(\phi)$ in the limit $L \to \infty$, which corresponds to the 
solitary waves. The left panel presents the mapping $\omega \to Q(\phi)$ 
for the even wave satisfying (\ref{even-wave}) and the right panel 
presents the same for the odd wave satisfying (\ref{odd-wave}). The numerical 
inaccuracies occur near $\omega = 1$ and the end points of the numerical data are shown by solid dots. By using (\ref{phi-limeven}) and (\ref{phi-limodd}), we are able to compute $Q(\phi)$ analytically at 
$\omega = 1$ for the peaked waves, see (\ref{Q-even-limit}) and (\ref{Q-odd-limit}) below, and show the result in Figure \ref{fig-mass} by open dots. An interpolation between the right solid dot and the open dot is shown 
by dotted line.

\begin{figure}[!ht]
	\centering
	\includegraphics[width=0.94\textwidth,height=0.3\textheight]{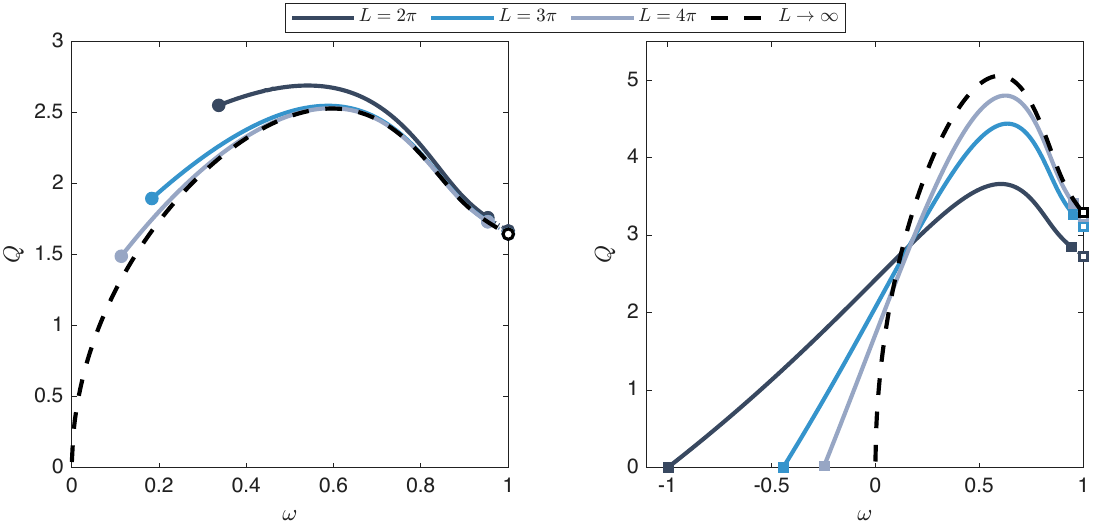} 
	\caption{Dependence of $Q(\phi)$ versus $\omega$ for $L = 2\pi,3\pi,4\pi$ and in the limit $L \to \infty$ (dashed line). Left panel: the even wave satisfying (\ref{even-wave}). Right panel: the odd wave satisfying (\ref{odd-wave}).}
	\label{fig-mass}
\end{figure}

Based on the numerical approximations and the sharp criterion in Theorem \ref{th-stability}, we conclude from Figure \ref{fig-mass} 
that both even and odd periodic waves are energetically stable for smaller 
values of $\omega$ and energetically unstable for values of $\omega$ near 
$\omega = 1$. To be precise, we formulate the following conjecture. 

\begin{conjecture}
There is $\omega_* \in (\omega_L,1)$ and $\Omega_* \in (0,1)$ such that 
the even wave satisfying (\ref{even-wave}) is energetically stable for 
$\omega \in (\omega_L,\omega_*)$ and unstable for $\omega \in (\omega_*,1)$, 
whereas the odd wave satisfying (\ref{odd-wave}) is energetically stable for 
$\omega \in (\Omega_L,\Omega_*)$ and unstable for $\omega \in (\Omega_*,1)$. 
\end{conjecture}

\begin{remark}
	\label{rem-data}
	The numerical data in Figures \ref{fig-period}, \ref{fig-even}, \ref{fig-odd}, and \ref{fig-mass} are obtained with high numerical accuracy, controlled within 
	$10^{-8}$ error, since the numerical error only arises in the computation of the period function $T(\mathcal{E},\omega)$ and the wave profile $\phi(x)$ from the corresponding integrals. The dependence of $Q(\phi)$ versus $\omega$ in the limit 
	$L \to \infty$ shown in Figure \ref{fig-mass} contradicts 
	the claim from \cite[Figure 5]{KPR24} that the dependence is monotonically increasing near $\omega = 0$ and $\omega = 1$ and decreasing for $\omega \in (\omega_1,\omega_2)$ for some $0 < \omega_1 < \omega_2 < 1$. Although the numerical data on $Q(\phi)$ versus $\omega$ in \cite{KPR24} was consistent with the numerical approximations of unstable eigenvalues in the spectral stability problem, see same Figure 5 in \cite{KPR24}, we have found that the claim 
	of stability of bright solitons near $\omega = 1$ in \cite{KPR24} is a numerical artefact. It is related with the center-difference approximations of the second-order derivatives with a large stepsize, which were used in \cite{KPR24}. By reducing the stepsize or performing computations with adaptive methods directly from (\ref{energy}) and (\ref{period-function}), we have found that $Q(\phi)$ is monotonically decreasing in $\omega$ near $\omega = 1$. Although our numerical data has a tiny gap near $\omega = 1$ due to the lack of numerical accuracy, comparison between the last numerical data (solid right dots) and the analytically computed limiting value of $Q(\phi)$ at $\omega = 1$ (open dots) suggest the monotone decrease of $Q(\phi)$ near $\omega = 1$.
\end{remark}

\subsection{Methodology and organization of the paper}
	
The existence of periodic orbits stated in Theorem \ref{th-existence} is obvious from the phase portrait shown in Figure \ref{fig-phase}. 
Nevertheless, we complement this dynamical system picture with the functional-analytic setup and prove the existence of periodic orbits based on the implicit function theorem. The family of even waves satisfying (\ref{even-wave}) 
with $x_0 = 0$ is studied in a subspace of the Sobolev space $H_{\rm per}^s$, $s \geq 0$ constituted by even periodic functions and denoted by $H_{\rm per,e}^s$. The family of odd waves satisfying (\ref{odd-wave}) with $x_0 = 0$ is studied in a subspace constituted by odd periodic functions and denoted by $H_{\rm per,o}^s$. These results are described in Section \ref{sec-2}.

The monotonicity of the period function stated in Theorem \ref{th-period} is proven with two different methods for the even and odd waves. For the even wave, 
we use Chicone's theorem \cite{chicone2} and confirm the monotonicity criterion 
based on the explicit analysis of the logarithmic and polynomial functions. 
For the odd wave, we estimate the period function by using convexity of the integrand functions. These results are described in Section \ref{sec-3}.

The energetic stability criterion stated in Theorem \ref{th-stability} 
is proven in two steps. 
As a first step, we analyze the Morse and nullity indices 
of the Schr\"{o}dinger operators $\mathcal{L}_{\pm} : H^2_{\rm per} \subset L^2_{\rm per} \to L^2_{\rm per}$ given by (\ref{matrixop}), where 
the Morse index denoted by $n(\mathcal{L}_{\pm})$ is the number of negative eigenvalues with the account of their multiplicities and the nullity index 
denoted by $z(\mathcal{L}_{\pm})$ is the multiplicity of the zero eigenvalue. 
For the even wave, we prove that $n(\mathcal{L}_+) = z(\mathcal{L}_+) = z(\mathcal{L}_-) = 1$ and $n(\mathcal{L}_-) = 0$. For the odd wave, 
we prove that $n(\mathcal{L}_+) = 2$, $n(\mathcal{L}_-) = z(\mathcal{L}_+) = z(\mathcal{L}_-) = 1$. These results are described in Section \ref{sec-4}.

As a second step, we analyze the Morse and nullity indices 
of the constrained operator $\mathcal{L}_+ |_{\{ \phi_0 \}^{\perp}}$, where 
the constraint with $\phi_0 \equiv \frac{\phi}{1- \phi^2}$ is due to the fixed mass $Q$ restriction \cite{GeyerPel25}. We show that $n(\mathcal{L}_+ |_{\{ \phi_0 \}^{\perp}}) = 0$ and $z(\mathcal{L}_+ |_{\{ \phi_0 \}^{\perp}}) = 1$ 
for the even wave if and only if the mapping $\omega \to Q(\phi)$ is monotonically 
increasing, which yields Theorem \ref{th-stability} for the even wave. 
We also show that $n(\mathcal{L}_+ |_{\{ \phi_0 \}^{\perp}}) = 1$ and $z(\mathcal{L}_+ |_{\{ \phi_0 \}^{\perp}}) = 1$ 
for the odd wave if and only if the mapping $\omega \to Q(\phi)$ is monotonically 
increasing. This is still inconclusive for the energetic stability of the odd wave. However, restricting $H^1_{\rm per}$ to the space $\mathcal{Y}$ of odd perturbations 
with respect to the half-period allows us to obtain 
$n(\mathcal{L}_+ |_{\{ \phi_0 \}^{\perp} \cap \mathcal{Y}}) = 
n(\mathcal{L}_- |_{\mathcal{Y}}) = 0$ and 
$z(\mathcal{L}_+ |_{\{ \phi_0 \}^{\perp} \cap \mathcal{Y}}) = 
z(\mathcal{L}_- |_{\mathcal{Y}}) = 1$ 
if and only if the mapping $\omega \to Q(\phi)$ is monotonically 
increasing, which yields Theorem \ref{th-stability} for the odd wave. These results are described in Section \ref{sec-5}. We note that the idea of restricting the space of periodic functions to odd perturbations with respect to the half-period is proposed in \cite{GalHar} for the stability analysis of odd waves in the cubic NLS equation.

Finally, the numerical methods used to compute the data in 
Figures \ref{fig-period}, \ref{fig-even}, \ref{fig-odd}, and \ref{fig-mass} 
are described in Section \ref{sec-6}.  We also elaborate Remark \ref{rem-data} about the limit $L \to \infty$ with more details.

\section{Existence of periodic waves} 
\label{sec-2}

We prove Theorem \ref{th-existence} within a functional analysis framework. 
Section \ref{sec-2-1} defines the basic facts used in the proofs. 
Sections \ref{sec-2-2} and \ref{sec-2-3} provide global 
continuations of the even and odd waves
for the fixed spatial period $L > 0$.

\subsection{Preliminary facts}
\label{sec-2-1}

We first recall some facts regarding Fredholm operators on a Banach space $X$. 
An unbounded operator $S : D(S) \subset X\rightarrow X$ is called {\em a Fredholm operator} if ${\rm{Range}}(S)$ is closed and ${\rm{dim}}(\Ker(S))$ and ${\rm{dim}}({\rm{Coker}}(S))$ are both finite, where ${\rm{Coker}}(S)$ denotes the quotient space given by ${\rm{Coker}}(S)=\bigslant{X}{{\rm Range}(S)}$.
{\em The index} of an unbounded Fredholm operator $S:D(S)\subset X\rightarrow X$ is given by 
$$
{\rm ind}(S)={\rm{dim}}(\Ker(S))-{\rm{dim}}({\rm{Coker}}(S))\in \mathbb{Z}.
$$ 
A Fredholm operator is of index zero if ${\rm ind}(S)=0$.		

The following lemma provides a result that is useful for our purposes since we obtain a suitable relation between $c(S)={\rm{dim}}({\rm{Coker}}(S))$ and $z(S)=\dim(\Ker(S))$.
	
	\begin{lemma}
		\label{lem-Hilbert} 
		Let $H$ be a real Hilbert space and let $K\subset H$ be a closed subspace. Then, $\bigslant{H}{K}\cong K^{\bot}$.
	\end{lemma}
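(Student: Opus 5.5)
The natural approach is to use the orthogonal projection onto $K^{\bot}$ and the first isomorphism theorem. Since $K$ is a closed subspace of the Hilbert space $H$, the projection theorem gives the orthogonal decomposition $H = K \oplus K^{\bot}$. Every $x\in H$ can therefore be written uniquely as $x = k + y$ with $k\in K$ and $y \in K^{\bot}$. Let $P: H\to K^{\bot}$ be the orthogonal projection $Px = y$. It is linear, bounded (with $\|P\|\le 1$) and surjective, since $Py=y$ for every $y\in K^{\bot}$.

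The key step is to compute the kernel of $P$. If $Px=0$, then $x=k\in K$; conversely, if $x\in K$, then the unique decomposition is $x = x+0$, so $Px=0$. Hence $\Ker(P)=K$. By the first isomorphism theorem for linear maps, $P$ induces a linear bijection
$$
\tilde P : \bigslant{H}{K}\to K^{\bot}, \qquad \tilde P(x+K)=Px .
$$

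For this to be an isomorphism of normed spaces and not just of vector spaces, it remains to check that $\tilde P$ is an isometry for the quotient norm $\|x+K\| = \inf_{k\in K}\|x-k\|$. Writing $x = k_0 + y$ with $k_0\in K$ and $y=Px$, the Pythagorean identity gives
$$
\|x-k\|^2=\|k_0-k\|^2+\|y\|^2 \qquad \text{for all } k\in K .
$$
The infimum over $k\in K$ is attained at $k=k_0$, so $\|x+K\| = \|Px\|$ and $\tilde P$ is an isometric isomorphism. This is all that is needed for the subsequent application: there, $K={\rm Range}(S)$ for a Fredholm operator $S$, which is closed, so the lemma yields $c(S)=\dim\big({\rm Range}(S)^{\bot}\big)$.

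There is no real obstacle here. The only point requiring care is that closedness of $K$ is used exactly once, to guarantee the orthogonal decomposition $H=K\oplus K^{\bot}$. Without it, one only has $\overline{K}\oplus K^{\bot}=H$, and the quotient $H/K$ need not be Hausdorff.
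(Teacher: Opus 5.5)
Your proof is correct and follows the same route as the paper: the paper's map $\mathcal{T}(u)=u-P_Ku$ is exactly your orthogonal projection onto $K^{\bot}$, with injectivity coming from its kernel being $K$ and surjectivity from the fact that it fixes $K^{\bot}$. Your version is slightly more careful than the paper's. Passing through the first isomorphism theorem makes well-definedness on cosets explicit, and you verify that the map is an isometry for the quotient norm rather than merely bounded.
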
		
		
	\begin{proof} 
		Let us define $\mathcal{T}:\bigslant{H}{K}\rightarrow K^{\bot}$ given by $\mathcal{T}(u)=u-P_Ku$ for any $u \in H$, where $P_K$ is the orthogonal projection from $H$ onto the closed subspace $K$. Since $||\mathcal{T}(u)||_H=||u-P_Ku||$, we obtain by the Pythagorean theorem  
		$$
		||u||_{H}^2 = ||P_K u||_H^2 + ||u-P_Ku||_H^2 = ||P_Ku||_H^2 + ||\mathcal{T}(u)||_H^2.
		$$
		The equality implies $||\mathcal{T}(u)||_H^2=||u||_{H}^2-||P_Ku||_H^2\leq ||u||_{H}^2$, and thus, $\mathcal{T}$ is a bounded operator. $\mathcal{T}$ is a one-to-one operator since $T(u)=0$ implies $u=P_Ku$ and this fact automatically implies $u \in K$. To show that $\mathcal{T}$ is onto, we consider $v\in K^{\bot}$. By the definition of orthogonal projection from $H$ onto the closed subspace $K$, there exists $u\in H$ such that $v=u-P_Ku$, and $\mathcal{T}$ is onto as desired.	
	\end{proof}
	
	\begin{remark}\label{rem1} 
		If $S:D(S)\subset H\rightarrow H$ is an unbounded self-adjoint linear operator with a closed range on a Hilbert space $H$, then we have by Lemma $\ref{lem-Hilbert}$ that 
		$$
		\bigslant{H}{{\rm Range}(S)}= \bigslant{H}{\Ker(S)^{\bot}}\cong\Ker(S)^{{\bot}{\bot}}=\Ker(S).
		$$ 
		Therefore, if ${\rm dim}(\Ker(S))$ is finite, then $S$ is always a Fredholm operator of index zero. 
	\end{remark}
	
	We conclude this section by stating a version of the implicit 
	function theorem used in our study, see \cite[Theorem 8.3.1]{buffoni-toland}. 
	
	\begin{theorem}\label{TCR} 
		Suppose that $X$ and $Y$ are Banach spaces, that $\mathsf{F}: X\times \mathbb{R}\rightarrow Y$ is of class $C^k$, $k\geq2$, and that $\mathsf{F}(x_0,\lambda_0)=0\in Y$ for some $(x_0,\lambda_0) \in X\times \mathbb{R}$. Suppose also that
		\begin{enumerate}
			\item $\partial_g\mathsf{F}(g,\lambda)$ is a Fredholm operator of index zero when $\mathsf{F}(g,\lambda)=0$ for all $(g,\lambda)\in U$. Here $U\subset X\times\mathbb{R}$ denotes an open subset.
			\item For some $(x_0,\lambda_0)\in X\times\mathbb{R}$, $\ker(L_{\lambda_0})$ is one dimensional, where $L_{\lambda_0}=\partial_g\mathsf{F}(x_0,\lambda_0)$. This means that
			$\ker(L_{\lambda_0})=\{h\in X;\ h=sh_0\ \mbox{for some}\ s\in\mathbb{R}\},$ $h_0\in X\backslash\{0\}$.
			\item The transversality condition holds: $\partial_{\lambda,g}^2\mathsf{F}[(x_0,\lambda_0)](1,h_0)\notin {\rm{Range}}(L_{\lambda_0})$.
		\end{enumerate} 
		Then, there exists $a_0>0$ and a branch of solutions 
		$\{(\chi(a),\Lambda(a));\ a\in (-a_0,a_0) \}\subset X\times\mathbb{R},$
		such that $\Lambda(0)=\lambda_0$ and $\chi(0)=x_0$. In addition, we have
		\begin{itemize}
			\item $\mathsf{F}(\chi(a),\Lambda(a))=0$ for all $a\in (-a_0,a_0)$.
			\item $a\mapsto\Lambda(a)$ and $a\mapsto \chi(a)$ are of class $C^{k-1}$ on $(-a_0,a_0)$.
			\item there exists an open set $U_0\subset  X\times \mathbb{R}$ such that $(x_0,\lambda_0)\in U_0$ and 
			$$\{(g,\lambda)\in U_0;\ \mathsf{F}(g,\lambda)=0,\ g\neq0\}=\{(\chi(a),\Lambda(a));\ a \in (-a_0,a_0) \}.
			$$
			\item If $\mathsf{F}$ is analytic, then  $\chi$ and $\Lambda$ are analytic functions on $(-a_0,a_0)$.					
		\end{itemize}
	\end{theorem}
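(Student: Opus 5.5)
This is the Crandall--Rabinowitz theorem on bifurcation from a simple eigenvalue, as stated in \cite[Theorem 8.3.1]{buffoni-toland}. I read the statement in that setting: $x_0 = 0$ and there is a trivial branch $\mathsf{F}(0,\lambda) = 0$ for all $\lambda$ near $\lambda_0$. This assumption is implicit in the exclusion $g \neq 0$ in the last conclusion, and the argument below uses it. I would prove the theorem by a Lyapunov--Schmidt type rescaling followed by the ordinary implicit function theorem.

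\textbf{Setup.} Write $L := L_{\lambda_0} = \partial_g \mathsf{F}(0,\lambda_0)$ and $\Ker(L) = {\rm span}\{h_0\}$. By hypothesis (1), $L$ is Fredholm of index zero, so ${\rm Range}(L)$ is closed and has codimension exactly one in $Y$. Choose a closed complement $Z$ of ${\rm span}\{h_0\}$ in $X$, so that $X = {\rm span}\{h_0\} \oplus Z$. Hypothesis (3) says that $w_0 := \partial^2_{\lambda,g}\mathsf{F}(0,\lambda_0)h_0$ lies outside ${\rm Range}(L)$. Hence $Y = {\rm Range}(L) \oplus {\rm span}\{w_0\}$.

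\textbf{Blow-up map.} Seek solutions of the form $g = a(h_0 + z)$ with $z \in Z$ and $\lambda = \lambda_0 + \mu$. Using $\mathsf{F}(0,\lambda) = 0$, define
$$
\mathsf{G}(a,z,\mu) = \int_0^1 \partial_g \mathsf{F}\bigl(ta(h_0+z),\lambda_0+\mu\bigr)(h_0+z)\,dt .
$$
For $a \neq 0$ this equals $a^{-1}\mathsf{F}(a(h_0+z),\lambda_0+\mu)$, by the fundamental theorem of calculus. Since $\mathsf{F}\in C^k$ with $k \geq 2$, the map $\mathsf{G}$ is of class $C^{k-1}$, in particular $C^1$. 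Moreover $\mathsf{G}(0,0,0) = Lh_0 = 0$.

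The derivative of $\mathsf{G}$ with respect to $(z,\mu)$ at the origin is
$$
(\zeta,\nu) \mapsto L\zeta + \nu\, w_0 .
$$
This map is injective: $L\zeta + \nu w_0 = 0$ forces $\nu = 0$ because $w_0 \notin {\rm Range}(L)$, and then $\zeta \in \Ker(L) \cap Z = \{0\}$. It is surjective because $L(Z) = {\rm Range}(L)$ and ${\rm span}\{w_0\}$ supplies the remaining one-dimensional complement. Therefore it is a bounded isomorphism $Z\times\mathbb{R}\to Y$.

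\textbf{Construction of the branch.} The classical implicit function theorem yields $C^{k-1}$ functions $a\mapsto (z(a),\mu(a))$ on some interval $(-a_0,a_0)$ with $\mathsf{G}(a,z(a),\mu(a)) = 0$. Setting $\chi(a) = a(h_0 + z(a))$ and $\Lambda(a) = \lambda_0 + \mu(a)$ gives the branch. It satisfies $\mathsf{F}(\chi(a),\Lambda(a)) = 0$, together with $\chi(0) = 0 = x_0$ and $\Lambda(0) = \lambda_0$. If $\mathsf{F}$ is analytic, then so is $\mathsf{G}$, and the analytic implicit function theorem gives analyticity of $\chi$ and $\Lambda$.

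\textbf{Local uniqueness (main obstacle).} The delicate step is to show that every solution with $g \neq 0$ near $(0,\lambda_0)$ lies on this branch. One must rule out nontrivial solutions that are not of the form $a(h_0+z)$ with $z$ small.

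Let $P$ be the projection of $Y$ onto ${\rm Range}(L)$ along $w_0$. Decompose $g = a h_0 + w$ with $w \in Z$. The projected equation $P\mathsf{F}(ah_0 + w,\lambda) = 0$ can be solved for $w$ by the implicit function theorem, since $PL|_Z$ is an isomorphism onto ${\rm Range}(L)$. This gives a unique local solution $w = W(a,\lambda)$ of class $C^k$, with $W(0,\lambda) = 0$ by uniqueness and the trivial branch.

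Consequently $W(a,\lambda) = a\,\widetilde W(a,\lambda)$, where $\widetilde W$ is continuous and $\widetilde W(0,\lambda_0) = \partial_a W(0,\lambda_0) = 0$. Any nearby solution with $g \ne 0$ therefore has $a \neq 0$ and $g = a(h_0 + z)$ with $z = \widetilde W(a,\lambda)$ small. It then solves $\mathsf{G}(a,z,\mu) = 0$, so by the uniqueness part of the implicit function theorem it coincides with $(\chi(a),\Lambda(a))$. Taking $U_0$ as the intersection of the neighbourhoods produced by the two applications of the implicit function theorem completes the proof.
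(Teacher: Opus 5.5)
The paper does not prove this theorem. It quotes it from \cite[Theorem 8.3.1]{buffoni-toland}, with some transcription slips. Your argument is the classical Crandall--Rabinowitz proof: blow up along $h_0$ via $\mathsf{G}(a,z,\mu)=a^{-1}\mathsf{F}(a(h_0+z),\lambda_0+\mu)$, show that $(\zeta,\nu)\mapsto L\zeta+\nu w_0$ is an isomorphism $Z\times\mathbb{R}\to Y$, apply the implicit function theorem, and recover local uniqueness through a Lyapunov--Schmidt splitting. It is correct under the reading you adopt.

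That reading ($x_0=0$ and $\mathsf{F}(0,\lambda)=0$ for $\lambda$ near $\lambda_0$) is not optional. It is the standing hypothesis of the theorem being quoted, and the exclusion $g\neq 0$ in the last bullet only makes sense with it. Making it explicit also clarifies the paper's use of the theorem. In Proposition~\ref{prop1} the trivial branch is $g=\sqrt{\lambda}$, so your version must be applied to $\tilde g\mapsto\mathsf{F}(\sqrt{\lambda}+\tilde g,\lambda)$. Its transversality vector is $\frac{d}{d\lambda}\bigl[\partial_g\mathsf{F}(\sqrt{\lambda},\lambda)\bigr]\big|_{\lambda=\lambda_0}h_0$, not $\partial^2_{\lambda,g}\mathsf{F}(\sqrt{\lambda_0},\lambda_0)h_0$.

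Relative to the bare citation, your route is self-contained and makes the hypotheses transparent. For instance, hypothesis (1) is used only at the single point $(0,\lambda_0)$. Its requirement on all of $U$ belongs to the global continuation theorem invoked in Proposition~\ref{prop2}, not to the local result.

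Three points need tightening.

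\textbf{The last bullet.} It cannot hold as written, since $(\chi(0),\Lambda(0))=(0,\lambda_0)$ has $g=0$. The right-hand side must be restricted to $0<|a|<a_0$. Your final paragraph also proves only the inclusion $\subseteq$. For $\supseteq$, shrink $a_0$ so the branch stays in $U_0$. Then note that $\chi(a)=a(h_0+z(a))\neq 0$ for $a\neq 0$, because $h_0\notin Z$.

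\textbf{The claim $\partial_aW(0,\lambda_0)=0$.} This needs its one line of justification. Differentiate $P\mathsf{F}(ah_0+W(a,\lambda),\lambda)=0$ in $a$ at $(0,\lambda_0)$ to get $L\,\partial_aW(0,\lambda_0)=0$. Since $\partial_aW(0,\lambda_0)\in Z$ and $\Ker(L)\cap Z=\{0\}$, it vanishes.

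\textbf{The choice of $U_0$.} ``The intersection of the neighbourhoods'' is imprecise, because the second implicit-function neighbourhood lives in $(a,z,\mu)$-space. Instead, define $U_0$ as the set of $(ah_0+w,\lambda)$ satisfying three conditions:
\begin{enumerate}
\item $(a,w,\lambda)$ lies in the uniqueness box of the first application;
\item $|a|<a_0$ and $|\lambda-\lambda_0|$ is small;
\item $\|\widetilde W(a,\lambda)\|$ is below the uniqueness radius of the second application.
\end{enumerate}
This set is open by continuity of $\widetilde W$ together with $\widetilde W(0,\lambda_0)=0$.
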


	\subsection{Existence of (positive) even periodic waves }
	\label{sec-2-2}

	To prove the existence of even periodic waves, we consider the subspace $L_{\rm per,e}^2$ contained in $L_{\rm per}^2$, which consists of even periodic functions, that is, functions in the Hilbert space
	$$
	L_{\rm per,e}^2 = \left\{ f\in L_{\rm per}^2 : \quad f(-x)=f(x) \;\; {\rm a.e.} \;\; x \in \left[-\frac{L}{2},\frac{L}{2}\right] \right\},
	$$
	where the spatial period $L > 0$ is fixed. The first result establishes a local 
	bifurcation of small (positive) even periodic waves from the (positive) constant solution $\phi = \sqrt{\omega}$ of the second-order equation (\ref{ode}).
	
	\begin{proposition}\label{prop1}
		There exists $a_0>0$ such that for all $a\in (-a_0,a_0)$ there exists an even periodic solution $\phi \in H^2_{\rm per,e}$ to the second-order equation (\ref{ode}) given by  
		\begin{equation}\label{smallsol}
			\phi(x)=\sqrt{\omega}+\sum_{n=1}^{\infty}\phi_n(x)a^n,
		\end{equation}
		where $\{ \phi_n \}_{n \in \mathbb{N}}$ are uniquely determined in $H^2_{\rm per,e}$. The frequency $\omega$ of the $L$-periodic wave is given by
		\begin{equation}\label{freqwave}
			\omega = \omega_L + \sum_{n=1}^{\infty} \omega_{2n} a^{2n},
		\end{equation}
		where $\omega_L = \frac{2 \pi^2}{L^2 + 2 \pi^2}$ and $\{ \omega_{2n} \}_{n \in \mathbb{N}}$ are uniquely determined constants. Furthermore, we have $\omega > \omega_L$ for small $a \neq 0$.
	\end{proposition}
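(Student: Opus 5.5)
We apply the Crandall--Rabinowitz type result of Theorem \ref{TCR} to the map
$$
\mathsf{F}(g,\omega) = -(1-g^2) g'' + \omega g - g^3, \qquad \mathsf{F}: H^2_{\rm per,e}\cap\{\|g\|_{L^\infty}<1\} \times (0,1) \to L^2_{\rm per,e}.
$$
This map is analytic because $H^2_{\rm per}$ is an algebra embedded in $L^\infty$. The constant branch $g=\sqrt{\omega}$ solves $\mathsf{F}=0$. To remove this trivial branch, we write $g = \sqrt{\omega} + \psi$ and work with $\tilde{\mathsf{F}}(\psi,\omega) = \mathsf{F}(\sqrt\omega+\psi,\omega)$, for which $\psi=0$ is a solution for every $\omega$. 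Linearizing at $\psi=0$ gives
$$
L_\omega \psi = -(1-\omega)\psi'' + (\omega - 3\omega)\psi = -(1-\omega)\psi'' - 2\omega\psi .
$$
On even $L$-periodic functions, the eigenfunctions are $\cos(2\pi k x/L)$, $k\ge 0$, with eigenvalues $(1-\omega)(2\pi k/L)^2 - 2\omega$. The $k=0$ eigenvalue is $-2\omega\neq 0$. The kernel is nontrivial exactly when $(1-\omega)4\pi^2/L^2 = 2\omega$ for some $k$. For $k=1$ this gives $\omega = \frac{2\pi^2}{L^2+2\pi^2} = \omega_L$. At $\omega_L$ the kernel is spanned by $h_0=\cos(2\pi x/L)$, because the eigenvalues are strictly increasing in $k$. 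Since $L_\omega$ is self-adjoint with compact resolvent, it has closed range and finite kernel, so Remark \ref{rem1} shows it is Fredholm of index zero. This verifies hypotheses (1) and (2), and the same argument applies to $\partial_\psi\tilde{\mathsf F}$ near $(0,\omega_L)$ because it is a uniformly elliptic self-adjoint perturbation. For the transversality condition (3), we compute $\partial_\omega L_\omega h_0 = h_0'' - 2h_0 = -(4\pi^2/L^2+2)h_0$. This is a nonzero multiple of $h_0$, hence not in ${\rm Range}(L_{\omega_L}) = \{h_0\}^\perp$. Theorem \ref{TCR} then yields analytic branches $\psi(a)$ and $\omega(a)$ with $\psi(a)=a h_0+O(a^2)$, which gives the expansions (\ref{smallsol}) and (\ref{freqwave}) with $\phi_1 = h_0$.

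To show that only even powers of $a$ appear in $\omega$, we use the half-period translation symmetry. If $\phi(x)$ solves (\ref{ode}), then so does $\phi(x+L/2)$, which is again even. Near the bifurcation point, this shift maps the branch at parameter $a$ (with leading term $a\cos(2\pi x/L)$) to the branch with leading term $-a\cos(2\pi x/L)$. By the local uniqueness statement of Theorem \ref{TCR}, this gives $\omega(-a)=\omega(a)$, so $\omega$ is even in $a$. The coefficients $\phi_n$ and $\omega_{2n}$ are then uniquely determined recursively by substituting the expansions and projecting at each order onto $h_0$ and $\{h_0\}^\perp$; we normalize by $\langle \phi_n, h_0\rangle = 0$ for $n\ge2$.

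The main obstacle is proving $\omega>\omega_L$, which amounts to computing $\omega_2$ and showing $\omega_2>0$. This requires a second-order computation followed by a third-order solvability condition, which we carry out as follows.

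1. Expand $\mathsf{F}$ around $\sqrt\omega$ to cubic order:
$$
\mathsf F = L_\omega\psi + 2\sqrt\omega\,\psi\psi'' - 3\sqrt\omega\,\psi^2 + \psi^2\psi'' - \psi^3 .
$$

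2. At order $a^2$, solve $L_{\omega_L}\phi_2 = -\sqrt{\omega_L}\,(2h_0h_0'' - 3h_0^2)$. The right-hand side involves only $1$ and $\cos(4\pi x/L)$, both orthogonal to $h_0$, so it is solvable. Its solution is $\phi_2 = A + B\cos(4\pi x/L)$ with explicit constants $A$ and $B$.

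3. At order $a^3$, project onto $h_0$ and set the result to zero. This gives $\omega_2$ as $-\langle N_3,h_0\rangle/\langle \partial_\omega L_\omega h_0,h_0\rangle$, where $N_3$ collects the cubic terms and the cross terms between $\phi_1$ and $\phi_2$.

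This is a routine but delicate trigonometric computation. We would simplify it using $k^2=4\pi^2/L^2 = 2\omega_L/(1-\omega_L)$ to express everything in terms of $\omega_L$, and then verify that the resulting rational function of $\omega_L\in(0,1)$ is positive. The sign is not obvious a priori, so if the algebra becomes unwieldy we would cross-check it against the period function: small even waves near the center $(\sqrt\omega,0)$ have period close to $2\pi\sqrt{(1-\omega)/(2\omega)}$, and the increase of $T$ in $\mathcal E$ from Theorem \ref{th-period} forces $\omega>\omega_L$ at fixed $L$.
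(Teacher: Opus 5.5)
Your overall route is the paper's: Theorem \ref{TCR} at $(\sqrt{\omega_L},\omega_L)$ in $H^2_{\rm per,e}$, a one-dimensional kernel spanned by $\cos(2\pi x/L)$, and a third-order solvability condition for $\omega_2$. Two of your additions improve on the written proof.

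First, the shift $g=\sqrt{\omega}+\psi$ produces the trivial branch $\psi=0$ that a Crandall--Rabinowitz theorem requires. The paper works at the $\omega$-dependent constant and checks transversality with $\partial_\lambda\partial_g\mathsf{F}$ at fixed $g$, which gives $h_0$. The shifted problem actually requires $-(k^2+2)h_0$; both are nonzero multiples of $h_0$, so the conclusion is unchanged.

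Second, the half-period shift $x\mapsto x+L/2$ preserves $H^2_{\rm per,e}$ and maps $h_0\mapsto -h_0$. It therefore justifies the absence of odd powers in (\ref{freqwave}), which the paper simply assumes.

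One small correction: for nonconstant $g$, the operator $\partial_g\mathsf{F}(g,\lambda)=-(1-g^2)\partial_x^2+\lambda-3g^2+2gg''$ is not self-adjoint. Index zero still holds, for either of two reasons. As in the paper, $(1-g^2)^{-1}\partial_g\mathsf{F}(g,\lambda)$ is a Schr\"odinger operator. Alternatively, the Fredholm index is stable under small perturbations in $\mathcal{B}(H^2_{\rm per,e},L^2_{\rm per,e})$.

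What you leave undone is the one step that decides $\omega>\omega_L$. Set $s=\sqrt{\omega_L}$ and $k=2\pi/L$, so that $(1-s^2)k^2=2s^2$. Then $\phi_2=c(\cos 2kx-3)$ with $c=\frac{s^2+3}{12s(1-s^2)}$. The $\cos kx$-component of the $\mathcal{O}(a^3)$ equation gives $(k^2+2)\omega_2=sc(k^2+15)-\frac34(k^2+1)$, that is,
\[
\omega_2=\frac{9-6\omega_L-\omega_L^2}{6(1-\omega_L)}>0,
\]
since $9-6\omega-\omega^2>2$ on $(0,1)$. This is exactly the paper's computation.

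Your fallback is a different but legitimate proof. It is not circular, because Section \ref{sec-3} does not use Section \ref{sec-2}. To complete it you need two observations:
\begin{itemize}
\item The bifurcating profile has minimal period $L$, because its $\cos kx$-coefficient is $a\neq 0$.
\item It lies on a closed orbit around the center $(\sqrt{\omega},0)$ with $0<\mathcal{E}<\mathcal{E}_\omega$, because it is nonconstant and $C^1$-close to $\sqrt{\omega}$.
\end{itemize}
Proposition \ref{prop-even-period} then gives $L=T(\mathcal{E},\omega)>2\pi\sqrt{(1-\omega)/(2\omega)}$, which rearranges to $\omega>\omega_L$.

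This route avoids the trigonometric algebra, but it imports Chicone's criterion and Lemmas \ref{lemma1}--\ref{lemma3} by forward reference. It also yields only the inequality, not the coefficient $\omega_2$. Only the small-$\mathcal{E}$ behaviour of $T$ is really used, and the sign of $\partial_{\mathcal{E}}T$ at the center is that of $P^{(4)}(\omega)$ in Lemma \ref{lemma1}. That quantity carries the same factor $9-6\omega-\omega^2$.
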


	\begin{proof}
		We outline the steps used to prove the existence of small--amplitude periodic waves using Theorem $\ref{TCR}$. Let $\mathsf{F}:H_{\rm per,e}^2\times\mathbb{R}\rightarrow L_{\rm per,e}^2$ be the smooth map defined by
		$$
			\mathsf{F}(g,\lambda)=-(1-g^2)g''+\lambda g-g^3.
		$$
			We see that $\mathsf{F}(g,\lambda)=0$ if and only if $g\in H_{\rm per,e}^2$ satisfies $(\ref{ode})$ with corresponding frequency $\lambda = \omega \in \R$. Let $\lambda_0>0$ be fixed. The Fr\'echet derivative of the function $\mathsf{F}$ with respect to the first variable at $(\sqrt{\lambda_0},\lambda_0)$ is given by
		$$
			D_{g}\mathsf{F}(\sqrt{\lambda_0},\lambda_0)f=-(1-\lambda_0)f''-2\lambda_0f.
	$$
The nontrivial kernel of $	D_{g}\mathsf{F}(\sqrt{\lambda_0},\lambda_0)$ is determined by functions $h\in H_{\rm per,e}^2$ such that 
$$
\widehat{h}(n)\left((1-\lambda_0)\left(\frac{2\pi n}{L}\right)^2-2\lambda_0\right)=0, \quad n\in\mathbb{N}\backslash\{0\},
$$
where $\widehat{h}(n)$ is the $n$-th coefficient of the Fourier series of $h\in L_{\rm per}^2$. We see that 	$D_{g}\mathsf{F}(\sqrt{\lambda_0},\lambda_0)$ has nontrivial kernel if, and only if, $\lambda_0=\frac{\left(\frac{2\pi n}{L}\right)^2}{2+\left(\frac{2\pi n}{L}\right)^2}>0$ for some $n\in\mathbb{N}\backslash\{0\}$. In this case, we have 
$$	
\Ker D_{g}\mathsf{F}(\sqrt{\lambda_0},\lambda_0) = {\rm Span}(\widetilde{\varphi}_n), \quad \widetilde{\varphi}_n(x)=\cos\left(\frac{2\pi n}{L}x\right), \quad n\in\mathbb{N}\backslash\{0\}.
$$
In addition, since $D_{g}\mathsf{F}(\sqrt{\lambda_0},\lambda_0)$ is a self-adjoint operator defined in $L_{\rm per,e}^2$ with domain $H_{\rm per,e}^2$, the transversality condition is also satisfied:
$$
\cos\left(\frac{2\pi n}{L}x\right) \notin \Ker D_{g}\mathsf{F}(\sqrt{\lambda_0},\lambda_0)^{\bot} = {\rm Range}D_{g}\mathsf{F}(\sqrt{\lambda_0},\lambda_0).
$$ 

To obtain periodic solutions with minimal spatial period $L > 0$, we must consider $n=1$. Thus, we have $\lambda_0 = \frac{2 \pi^2}{L^2 + 2 \pi^2} = \omega_L$ and define the set $\mathcal{S}$ given by 
		$$
		\mathcal{S}=\{(g,\lambda)\in U;\ \mathsf{F}(g,\lambda)=0\},
		$$
		where 
			$$
			U=\left\{(g,\lambda)\in H_{per,e}^2\times \left(\lambda_0,1\right);\ 0<g<1\right\}.
			$$			
Let $(g, \lambda) \in \mathcal{S}$ be a solution of the equation $\mathsf{F}(g, \lambda) = 0$. First, we prove that the linear operator
$$
D_gF(g,\lambda)=-(1-g^2)\partial_x^2+\lambda-3g^2+2gg''
$$ 
is a Fredholm operator of index zero. In fact, in order to simplify the notation, let us denote 
$$
Q = D_gF(g,\lambda) \quad \mbox{\rm and} \quad P=(1-g^2)^{-1}Q.
$$ 
First, $P$ is clearly a self-adjoint operator. Thus,
$\sigma(P)=\sigma_{\rm disc}(P)\cup \sigma_{\rm ess}(P)$,  where $\sigma(P)$
denotes the spectrum of  $P$, and  $\sigma_{\rm disc}(P)$ and $\sigma_{\rm ess}(P)$
denote, respectively, the discrete and essential spectra of  $P$. Since $H^2_{\rm per,e}$ is compactly embedded in $L^2_{\rm per,e}$, the operator $P$ has compact resolvent. Consequently, $\sigma_{\rm ess}(P) = \emptyset$, and $\sigma(P)=\sigma_{\rm disc}(P)$ consists of isolated eigenvalues with finite algebraic multiplicities (see e.g., \cite[Section III.6]{kato}). Since $(g,\lambda)\in \mathcal{S}$, we see that $0$ is an eigenvalue for $P$ associated with the eigenfunction $g'$ and $z(P)$ is finite. A basis for the subspace $\Ker(P)$  can be taken as $\{v_1,\cdots,v_n\}$. 

On one hand, by Remark $\ref{rem1}$ we have 
$$
\bigslant{L_{\rm per,e}^2}{{\rm Range}(Q)}\cong {\rm Range}(Q)^{\bot}=\Ker(Q^{*})=\Ker(P(1-g^2)).
$$ 
Since $\{v_1(1-g^2)^{-1},\cdots,v_n(1-g^2)^{-1}\}$ is a basis for the subspace $\Ker(Q^{*})$, it follows that $z(P)=z(Q^{*})$, and $z(Q^{*})$ is finite. On the other hand, we have $\Ker(P)=\Ker(Q)$, so that $z(Q^{*})=z(P)=z(Q)$ and the index of the Fredholm operator $Q$ is zero as desired.

Thus, by defining $\lambda_0=\omega_L \in (0,1)$, the local bifurcation theory established in Theorem $\ref{TCR}$ guarantees the existence of an open interval $I\subset (0,1)$ near $\omega_L$, an open ball $B(\sqrt{\omega_L},r) \subset H_{\rm per,e}^{2}$, around the equilibrium solution $\sqrt{\omega_L}$, for some $r>0$ and a  smooth mapping
			\begin{equation}
			\label{localcurve}\omega \in I \mapsto \phi \in B(\sqrt{\omega_L},r) \subset H_{\rm per,e}^{2},
			\end{equation}
			such that $\mathsf{F}(\phi,\omega) = 0$ for all $\omega \in I$ and $\phi\in B(\sqrt{\omega_L},r)$.

Next, we determine the first terms in the expansions $(\ref{smallsol})$ and $(\ref{freqwave})$. To simplify the notation, let us define $s=\sqrt{\omega_L}$. The Taylor expansion of the square-root function at $\omega_L$ yields 
			$$
				\sqrt{\omega}=s+\frac{\omega_2}{2s}a^2+\mathcal{O}(a^4).
				$$
The correction terms $\omega_2$ and $\{ \phi_1,\phi_2,\phi_3\}$ are uniquely determined by the following recurrence relations
			 \begin{equation}\label{recurrence}\left\{\begin{array}{llllll}
					\mathcal{O}(a)\ : &-(1-s^2)\phi_1''-2s^2\phi_1=0,\\
					\mathcal{O}(a^2):&-(1-s^2)\phi_2''-2s^2\phi_2+2s\phi_1\phi_1''-3s\phi_1^2=0,\\
					\mathcal{O}(a^3): &-(1-s^2)\phi_3''-2s^2\phi_3+\omega_2(\phi_1''-2\phi_1)+2s(\phi_1\phi_2''+\phi_2\phi_1'')\\
					& \qquad -6s\phi_1\phi_2-\phi_1^3+\phi_1^2\phi_1''=0.
				\end{array}\right.\end{equation}
We see that $\phi_1(x)=\cos(kx)$ satisfies the equation containing the term $\mathcal{O}(a)$ for $k=\frac{2\pi}{L}$ since $\omega_L= \frac{2\pi^2}{L^2 + 2 \pi^2}$. Solving the inhomogeneous equation for $\mathcal{O}(a^2)$, we obtain
			$$
			\phi_2(x)=\frac{s^2+3}{12s(1-s^2)}(\cos(2kx)-3).
			$$
			We need to find the constant $\omega_2$ in the third equation of $(\ref{recurrence})$. The inhomogeneous equation at $\mathcal{O}(a^3)$ admits a solution $\phi_3 \in H^2_{\rm per,e}$ if, and only if, the right-hand side is orthogonal to $\phi_1$, which selects uniquely the
			correction 
			$$
			\omega_2 = \frac{s^4+6s^2-9}{6(s^2-1)} = \frac{9 - 6 \omega_L - \omega_L^2}{6(1-\omega_L)}.
			$$ 
Since $\omega_2 > 0$, the solution (\ref{smallsol}) with (\ref{freqwave}) exists for $\omega > \omega_L$ near $\omega_L$. This finishes the proof of the proposition.
	\end{proof}
	
	\begin{remark}
The bifurcating solution obtained in Proposition $\ref{prop1}$ is unique in $H^2_{\rm per,e}$, up to the parametrization provided by the bifurcation parameter. This uniqueness is ensured since the Lyapunov–Schmidt reduction requires the application of the implicit function theorem. In the case of a one-dimensional kernel, the bifurcation occurs along a single branch of solutions. The implicit function theorem then guarantees the existence of a unique smooth curve of solutions that bifurcates from the constant solution.
		\end{remark}
	
The next result establishes a global continuation of (positive) even periodic waves from the local bifurcating solution obtained in Proposition \ref{prop1}.
	
	\begin{proposition}\label{prop2}
		The local solution obtained in Proposition $\ref{prop1}$ is global, that is, $\phi$ exists for all $\omega\in \left(\omega_L,1\right)$. In addition, the pair $(\phi,\omega)\in H_{\rm per,e}^2\times\left(\omega_L,1\right)$ is continuous with respect to the parameter $\omega\in \left(\omega_L,1\right)$ and it satisfies $(\ref{ode})$.
		\end{proposition}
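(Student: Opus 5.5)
We plan to combine the analytic global bifurcation theory of Buffoni--Toland (the continuation counterpart of Theorem \ref{TCR}, cf. \cite[Theorem 9.1.1]{buffoni-toland}) with a priori information from the phase portrait of (\ref{odephi}). The local branch (\ref{localcurve}) from Proposition \ref{prop1} extends to a continuous curve $\mathcal{R}=\{(\phi(a),\omega(a)) : a\ge 0\}\subset \mathcal{S}$. Along it $\mathsf{F}$ is analytic and, as shown in the proof of Proposition \ref{prop1}, $D_g\mathsf{F}$ is Fredholm of index zero at every zero of $\mathsf{F}$ in $U$. The global alternative then says that $\mathcal{R}$ is unbounded in $H^2_{\rm per,e}\times\R$, or approaches $\partial U$, or is a closed loop. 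So we must rule out the loop and any escape to $\partial U$ with $\omega$ staying inside $(\omega_L,1)$. We then conclude that the projection $\omega(\mathcal{R})$ is all of $(\omega_L,1)$.

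\textbf{Step 1 (a priori bounds).} Take $(\phi,\omega)\in\mathcal{S}$ with $\omega\in[\omega_L+\delta,1-\delta]$. Then $\phi$ is an $L$-periodic even solution of (\ref{odephi}) with $0<\phi<1$. By the phase portrait, this orbit lies inside the homoclinic loop, so its energy satisfies $\mathcal{E}\in(0,\mathcal{E}_\omega)$. This gives $\phi\ge\phi_{\min}(\mathcal{E},\omega)>0$ and $\phi\le\phi_{\max}<\sqrt{1-c(\delta)}$.

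To see the upper bound, note that the turning point $\phi_{\max}$ solves $V(\phi_{\max})=\mathcal{E}<\mathcal{E}_\omega=V(0)$. Since $V\to+\infty$ as $\phi\to1$, this keeps $\phi_{\max}$ uniformly away from $1$. Away from $\phi=1$ the equation is uniformly elliptic, so from $\phi''=(\omega-\phi^2)\phi/(1-\phi^2)$ we get uniform $H^2$ (indeed $C^k$) bounds.

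To keep $\phi_{\min}$ away from $0$, we will use that the period tends to infinity as $\mathcal{E}\to\mathcal{E}_\omega$, while the period here is fixed and equal to $L$. Continuity of $T$ in $(\mathcal{E},\omega)$ then keeps $\mathcal{E}$ uniformly below $\mathcal{E}_\omega$ on compact $\omega$-sets. This rules out both unboundedness and reaching the boundary pieces $g=0$, $g=1$ of $U$ for interior $\omega$.

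\textbf{Step 2 (no loop, no return to a constant).} We must also exclude that the curve returns to the constant solution $\sqrt{\omega}$ at another bifurcation point. The candidates are $\lambda_0=\frac{(2\pi n/L)^2}{2+(2\pi n/L)^2}$ with $n\ge2$, or $\omega_L$ itself. Near $\sqrt\omega$ the orbit is close to the center, and its period tends to $2\pi\sqrt{(1-\omega)/(2\omega)}$, the linearized period. Along the branch the minimal period stays $L$: the number of zeros of $\phi'$ in a period is preserved, because $\phi'$ has simple zeros by the ODE and the nondegeneracy of turning points. So returning to a constant is possible only at $\omega=\omega_L$.

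Here we use that the mapping $\mathcal{E}\mapsto T(\mathcal{E},\omega)$ is monotonically increasing (Theorem \ref{th-period}, proved in Section \ref{sec-3}, which we take as available). For each $\omega$ the even $L$-periodic wave is then unique, namely $\mathcal{E}=\mathcal{E}_L(\omega)$. It exists exactly when $\lim_{\mathcal{E}\to0}T<L$, i.e.\ $\omega>\omega_L$. Hence $\mathcal{S}$ is a single graph over $(\omega_L,1)$, so no loop can form. In particular $\omega(a)$ is monotone along $\mathcal{R}$, and the alternative forces $\omega(a)\to1$.

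\textbf{Step 3 (continuity).} Continuity of $\omega\mapsto\phi$ in $H^2_{\rm per,e}$ follows from two facts: $\mathcal{R}$ is a continuous curve, and it is a graph over $\omega$. Alternatively, one can use continuous dependence of ODE solutions on $(\mathcal{E}_L(\omega),\omega)$ with initial data $\phi(0)=\phi_{\max}$, $\phi'(0)=0$.

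The main obstacle is Step 1's lower bound. We must exclude that the branch escapes to the homoclinic level while $\omega$ stays interior. This requires the divergence of $T$ as $\mathcal{E}\to\mathcal{E}_\omega$, uniformly in $\omega$ on compacts, which we would verify from the logarithmic divergence of (\ref{period-function}) near the saddle $(0,0)$.
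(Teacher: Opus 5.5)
Your argument is correct in substance, but it takes a different route from the paper.

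\textbf{The paper's route.} The paper puts its effort into the compactness hypothesis of \cite[Theorem 9.1.1]{buffoni-toland}. It takes a bounded sequence in $\mathcal{S}$, extracts a weak $H^2$ / strong $H^1$ limit, and passes to the limit in (\ref{ode}). It then argues that the limit frequency is neither $1$ nor $\omega_L$ and that $0<\phi<1$, notes from (\ref{freqwave}) that $\omega$ is non-constant on the local branch, and invokes the global theorem. It never uses the period function. It also does not explicitly exclude the closed-loop alternative or explain why the global curve is a graph over $(\omega_L,1)$. Its claim that only the constant solves (\ref{odeglobal}) at $\omega=\omega_L$ tacitly rests on the behaviour of $T(\cdot,\omega_L)$.

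\textbf{Your route.} You decide the global alternatives through the phase plane. Confinement of the energy to $(0,\mathcal{E}_\omega)$, together with the divergence of $T$ at the homoclinic level, gives uniform bounds on compact $\omega$-sets. Theorem \ref{th-period} gives uniqueness of the $L$-periodic orbit for each $\omega$, which excludes loops and gives the graph property needed for continuity in $\omega$. Using Theorem \ref{th-period} here is legitimate, since Section \ref{sec-3} relies only on $V$ and not on Proposition \ref{prop2}. The trade-off is that once you have it, the bifurcation machinery becomes essentially redundant. Your Step 3 alternative already proves the proposition: obtain $\mathcal{E}_L(\omega)$ by the intermediate value theorem on $T(\cdot,\omega)$, and use continuous dependence on $(\mathcal{E}_L(\omega),\omega)$.

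\textbf{Points to repair.}

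\emph{Uniqueness.} Uniqueness holds only up to a half-period shift. If $\phi$ is even and $L$-periodic, then $\phi(\cdot+L/2)$ is also even. These two functions are exactly the $a>0$ and $a<0$ halves of the local branch (\ref{smallsol}). In addition, $\mathcal{S}$ contains the constants $\sqrt{\omega}$ and, above the higher bifurcation values, waves of minimal period $L/n$. So $\mathcal{S}$ is not a single graph. You should restrict to the nonconstant waves of minimal period $L$ with a maximum at $x=0$. This sheet is preserved along continuous paths because $\phi''(0)=-V'(\phi(0))\neq 0$ cannot change sign.

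\emph{Compactness.} You did not list the compactness hypothesis (bounded closed subsets of $\mathcal{S}$ are compact), which must be checked before the global alternative applies. Your Step 1 bounds supply it: once $\phi$ stays away from $1$, the equation bounds $\phi''$ and higher derivatives, and Arzel\`a--Ascoli gives compactness.

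\emph{Monotonicity of $\omega$.} Monotonicity of $\omega$ along the curve does not follow from the graph property alone. It needs the injectivity of the Buffoni--Toland curve in the non-loop case, or you can bypass it with the direct construction.
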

	
		\begin{proof}
			To extend the local curve in $(\ref{localcurve})$ to a global curve, we need to prove that every bounded and closed subset $\mathcal{R}\subset \mathcal{S}$ is a compact set contained in $H_{\rm per,e}^2\times \left(\omega_L,1\right)$. To this end, we want to prove that $\mathcal{R}$ is sequentially compact, that is, if $\{(\phi_m,\omega_m)\}_{m\in\mathbb{N}}$ is a sequence in $\mathcal{R}$, there exists a subsequence of $\{(\phi_m,\omega_m)\}_{m\in\mathbb{N}}$ that converges to a point in $\mathcal{R}$. Up to a subsequence, we obtain 
			\begin{equation}\label{omegam}
				\omega_m\rightarrow \omega\ \ \mbox{in}\ \left[\omega_L,1\right],
				\end{equation}
				and 
			\begin{equation}\label{phim}
				\phi_m\rightharpoonup \phi\ \ \mbox{in}\ H_{\rm per,e}^2,
			\end{equation}	
			as $m\rightarrow +\infty$. Next, $\{\phi_m\}$ is a bounded sequence in $H_{\rm per,e}^2$ and it satisfies $0<\phi_m<1$. Since the embedding $H_{\rm per,e}^2\hookrightarrow H_{\rm per,e}^1$ is compact and $H_{\rm per,e}^1$ is a Banach algebra, we obtain $\phi_m^3\rightarrow \phi^3$ in $H_{\rm per,e}^1$ as $m \rightarrow +\infty$. In particular, we have $\phi_m^2\rightarrow \phi^2$ in $H_{\rm per,e}^1\hookrightarrow C_{\rm per,e}$ and by $(\ref{phim})$, we obtain
			\begin{equation}\label{phim1}
				(1-\phi_m^2)\phi_m''\rightharpoonup (1-\phi^2)\phi''\ \ \mbox{in}\ L_{\rm per,e}^2,
			\end{equation}
			as $m \rightarrow +\infty$. Since in particular $\phi_m^3\rightarrow \phi^3$ in $L_{\rm per,e}^2$ as $m \rightarrow +\infty$, we obtain by $(\ref{omegam})$ and $(\ref{phim1})$ that the pair $(\phi,\omega)\in H_{\rm per,e}^2\times \left[\omega_L,1\right]$ satisfies
			\begin{equation}\label{odeglobal}
			-(1-\phi^2)\phi''+\omega\phi-\phi^3=0.
		\end{equation}	
We see from $(\ref{odeglobal})$ that $\omega < 1$, since smooth periodic solutions to this equation do not exist when $\omega = 1$. Moreover, if $\omega = \omega_L$, the constant solution $\phi = \sqrt{\omega_L}$ is the only solution to $(\ref{odeglobal})$. Thus $\omega\in \left(\omega_L,1\right)$ as requested and we have $\phi > 0$. On the other hand, if there exists $t_0\in [0,L]$ such that $\lim_{t\rightarrow t_0}\phi(t)=1$, then $\omega=1$, which is a contradiction, since in this case there are no periodic solutions of $(\ref{odeglobal})$. Therefore, we obtain $0<\phi<1$ as requested. 

Finally, by $(\ref{freqwave})$ the frequency $\omega$ of the periodic wave is not constant. By applying \cite[Theorem 9.1.1]{buffoni-toland} we can extend globally the local bifurcation curve given in $(\ref{localcurve})$. More precisely, there is a continuous mapping
		$$
		\omega \in \left(\omega_L,1\right) \mapsto \phi \in  H_{\rm per,e}^{2},
		$$
		where $\phi$ solves equation $(\ref{odeglobal})$.
		\end{proof}

\begin{remark}\label{boundphi}
Since $\phi$ is continuous, satisfies $0 < \phi < 1$, and there is no $t_{\pm} \in [0, L]$ such that 
$\lim_{t \to t_-} \phi(t) = 0$ and $\lim_{t \to t_+} \phi(t) = 1$, we obtain that there exist $m$ and $M$ that depend on $\omega \in (\omega_L,1)$ such that $0 < m < M<1$ and $m < \phi(x) \leq M$ for every $x \in [0, L]$. In fact, since $\phi$ is continuous on the compact set $[0, L]$, we have 
$$
m = \min_{[0,L]} \phi(x), \quad M =\max_{[0,L]}\phi(x).
$$ 
\end{remark}	

Propositions \ref{prop1} and \ref{prop2}, as well as Remark \ref{boundphi}, justify the existence result stated in Theorem \ref{th-existence} for the even wave satisfying (\ref{even-wave}) with $x_0 = 0$.

\subsection{Existence of odd periodic waves}
		\label{sec-2-3}

		To prove the existence of odd periodic waves, we consider the subspace $L_{\rm per,o}^2$ contained in $L_{\rm per}^2$, which consists of odd periodic functions, that is, 
		$$
		L_{\rm per,o}^2 = \left\{ f\in L_{\rm per}^2 : \quad f(-x)=-f(x)\ {\rm a.e.} \;\; x \in \left[-\frac{L}{2},\frac{L}{2}\right] \right\},
		$$
		where the spatial period $L > 0$ is fixed. The first result gives a local bifurcation of small odd periodic waves from the zero solution $\phi = 0$ of the second-order equation (\ref{ode}).
		
		\begin{proposition}\label{prop11}
			There exists $a_0>0$ such that for all $a\in (-a_0,a_0)$ there exists an odd periodic solution $\phi \in H^2_{\rm per,o}$ to the second-order equation $(\ref{ode})$ given by
			\begin{equation}
			\label{smallsol1}
				\phi(x) = \sum_{n=1}^{\infty}\phi_{2n-1}(x)a^{2n-1},
			\end{equation}
			where $\{ \phi_n \}_{n \in \mathbb{N}}$ are uniquely determined functions in $L^2_{\rm per,o}$. The frequency $\omega$ of the $L$-periodic wave is given by
			\begin{equation}
			\label{freqwave1}
				\omega = \Omega_L +\sum_{n=1}^{\infty} \omega_{2n} a^{2n},
			\end{equation}
			where $\Omega_L = -\frac{4\pi^2}{L^2}$ and $\{ \omega_{2n} \}_{n \in \mathbb{N}}$ are uniquely determined constants. Furthermore, we have $\omega > \Omega_L$ for small $a \neq 0$.
		\end{proposition}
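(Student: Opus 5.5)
The plan is to follow the scheme of the proof of Proposition \ref{prop1}, now in the odd subspace and bifurcating from the trivial solution. Define $\mathsf{F}:H^2_{\rm per,o}\times\mathbb{R}\to L^2_{\rm per,o}$ by
$$
\mathsf{F}(g,\lambda)=-(1-g^2)g''+\lambda g-g^3 .
$$
This map is well defined because $g^2g''$ and $g^3$ are odd whenever $g$ is odd. It is smooth, and in fact analytic, since it is polynomial in $g$ and $g''$ and $H^2_{\rm per}$ is a Banach algebra. We have $\mathsf{F}(0,\lambda)=0$ for every $\lambda$, and the linearization at the trivial solution is
$$
D_g\mathsf{F}(0,\lambda_0)f=-f''+\lambda_0 f .
$$
On the odd Fourier modes $\sin(2\pi n x/L)$ this operator acts as multiplication by $(2\pi n/L)^2+\lambda_0$. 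Hence the kernel is nontrivial if and only if $\lambda_0=-(2\pi n/L)^2$.

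Taking $n=1$, so that the minimal period is $L$, gives $\lambda_0=\Omega_L=-4\pi^2/L^2$. The kernel is then ${\rm Span}(\sin(kx))$ with $k=2\pi/L$, and it is one-dimensional within the odd subspace. The transversality condition holds because $\partial^2_{\lambda g}\mathsf{F}(0,\Omega_L)[1,h_0]=h_0=\sin(kx)$, while ${\rm Range}\,D_g\mathsf{F}(0,\Omega_L)=\Ker^{\perp}$ by self-adjointness. The Fredholm index-zero property on a neighbourhood of solutions $(g,\lambda)$ with $|g|<1$ I would prove exactly as in Proposition \ref{prop1}. One writes $Q=D_g\mathsf{F}(g,\lambda)$ and $P=(1-g^2)^{-1}Q$, and notes that $P$ has compact resolvent on $L^2_{\rm per,o}$. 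Then $\Ker(Q^*)=(1-g^2)^{-1}\Ker(P)$, which yields $z(Q^*)=z(Q)$, and Remark \ref{rem1} gives index zero. Theorem \ref{TCR} now provides an analytic branch $(\chi(a),\Lambda(a))$ with $\chi(0)=0$ and $\Lambda(0)=\Omega_L$.

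Next I will establish the parity structure of the expansions (\ref{smallsol1}) and (\ref{freqwave1}). The map is odd in $g$: $\mathsf{F}(-g,\lambda)=-\mathsf{F}(g,\lambda)$. Consequently $(-\chi(a),\Lambda(a))$ is also a solution. By the local uniqueness in Theorem \ref{TCR}, with the parametrization normalized by the projection onto $\sin(kx)$, this solution coincides with $(\chi(-a),\Lambda(-a))$. Therefore $\chi$ is odd in $a$ and $\Lambda$ is even in $a$. This gives $\phi_{2n}=0$ and leaves only even powers in $\omega$.

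It remains to compute $\omega_2$ and show that it is positive. Substituting $\phi=a\phi_1+a^3\phi_3+\dots$ and $\omega=\Omega_L+\omega_2a^2+\dots$ and collecting terms gives the following. At order $\mathcal{O}(a)$ we get $-\phi_1''+\Omega_L\phi_1=0$, solved by $\phi_1=\sin(kx)$. At order $\mathcal{O}(a^3)$ we get
$$
-\phi_3''+\Omega_L\phi_3+\omega_2\phi_1-\phi_1^3+\phi_1^2\phi_1''=0,
$$
in which $-\phi_1^3+\phi_1^2\phi_1''=-(1+k^2)\sin^3(kx)$. Solvability in $H^2_{\rm per,o}$ requires orthogonality to $\sin(kx)$. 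Using $\sin^3=\tfrac34\sin-\tfrac14\sin(3\,\cdot)$, this gives
$$
\omega_2=\tfrac34\left(1+\tfrac{4\pi^2}{L^2}\right)>0,
$$
which implies $\omega>\Omega_L$ for small $a\neq0$. All higher coefficients are fixed uniquely by the same Fredholm alternative at each odd order.

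The main obstacle is the rigorous justification of the Fredholm property, since the principal coefficient $1-g^2$ must stay bounded away from zero. Near $g=0$ this is automatic from continuity and the Sobolev embedding. The symmetry/uniqueness argument giving the parity of the expansions also needs the parametrization to be chosen carefully, as described above.
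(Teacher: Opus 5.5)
Your proposal is correct and follows the same route as the paper. You apply Theorem~\ref{TCR} in $H^2_{\rm per,o}$ at the trivial solution with $\Omega_L=-4\pi^2/L^2$ and kernel ${\rm Span}(\sin(kx))$, borrow the Fredholm argument from Proposition~\ref{prop1}, and use the $\mathcal{O}(a^3)$ solvability condition to get $\omega_2=\frac34(1+k^2)>0$. Your symmetry argument via $\mathsf{F}(-g,\lambda)=-\mathsf{F}(g,\lambda)$, which gives the odd/even structure of the expansions in $a$, is a useful addition that the paper leaves implicit.
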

	
		\begin{proof} 
			The proof is similar to that of Proposition $\ref{prop1}$ and therefore, we only outline the main steps. Indeed, let $\mathsf{G}:H_{\rm per,o}^2 \times\mathbb{R} \rightarrow L_{\rm per,o}^2$ be the smooth map defined by
			$$
				\mathsf{G}(g,\lambda)=-(1-g^2)g''+\lambda g-g^3.
			$$
			We see that $\mathsf{G}(g,\lambda)=0$ if, and only if, $g\in H_{\rm per,o}^2$ satisfies $(\ref{ode})$ with corresponding frequency $\lambda = \omega \in \R$. Let $\lambda_0\in\mathbb{R}$ be fixed. The Fr\'echet derivative of the function $\mathsf{G}$ with respect to the first variable at $(0,\lambda_0)$ is then given by
			$$
				D_{g}\mathsf{G}(0,\lambda_0)f=-f''+\lambda_0f.
			$$
The nontrivial kernel of $	D_{g}\mathsf{G}(0,\lambda_0)$ is determined by functions $h\in H_{\rm per,o}^2$ such that 
$$
\widehat{h}(n)\left(-\left(\frac{2\pi n}{L}\right)^2+\lambda_0\right)=0, \quad 
n \in\mathbb{N}\backslash\{0\}.
$$ 
Since $D_{g}\mathsf{G}(0,\lambda_0)$ has nontrivial kernel if and only if $\lambda_0=-\left(\frac{2\pi n}{L}\right)^2<0$ for some $n\in\mathbb{N}\backslash\{0\}$, we have
$$	
\Ker D_{g}\mathsf{G}(0,\lambda_0) =[\widetilde{\varphi}_n], \quad \widetilde{\varphi}_n(x)=\sin\left(\frac{2\pi n}{L}x\right), 
\quad n\in\mathbb{N}\backslash\{0\}.
$$	
To obtain periodic solutions with the minimal spatial period $L > 0$, we must consider $n=1$. Thus, we have $\lambda_0 = -\frac{4 \pi^2}{L^2} = \Omega_L$. 
		
The remainder of the proof is identical to the one in Proposition $\ref{prop1}$ but in order to complete the proof, we shall compute the first terms in the expansions $(\ref{smallsol1})$ and $(\ref{freqwave1})$. Indeed, for $k=\frac{2\pi}{L}$ the corrections terms $\omega_2$ and $\{ \phi_1, \phi_3\}$ are uniquely determined by the following recurrence relations
\begin{equation}
\label{recurrence1}
\left\{\begin{array}{llllll}
\mathcal{O}(a)\ : &-\phi_1''-k^2\phi_1=0,\\
\mathcal{O}(a^3): &-\phi_3''-k^2\phi_3+\omega_2\phi_1+\phi_1^2\phi_1''-\phi_1^3=0.
\end{array}\right.
\end{equation}
We see that $\phi_1(x)=\sin(kx)$ satisfies the equation containing the term $\mathcal{O}(a)$. To find the constant $\omega_2$ in the second equation of $(\ref{recurrence1})$, we use the fact that the inhomogeneous equation at $\mathcal{O}(a^3)$ admits a solution $\phi_3 \in H^2_{\rm per,o}$ if, and only if, the right-hand side is orthogonal to $\phi_1$, which selects uniquely the
correction 
$$
\omega_2=\frac{3}{4}(1+k^2).
$$
Since $\omega_2 > 0$, the solution (\ref{smallsol1}) with (\ref{freqwave1}) 
exists for $\omega > \Omega_L$ near $\Omega_L$. Finally, we solve the inhomogeneous equation for $\mathcal{O}(a^3)$ and obtain 
$$
\phi_3(x)=-\frac{1+k^2}{32k^2}\sin(3kx).
$$
This finishes the proof of the proposition.
\end{proof}
			
\begin{remark}\label{remodd}
Using analogous computations as in Proposition $\ref{prop2}$, we can establish that the local solution obtained in Proposition $\ref{prop11}$ is global, that is, $\phi$ exists for all $\omega \in (\Omega_L,1)$. In addition, the pair $(\phi,\omega)\in H_{\rm per,o}^2\times (\Omega_L,1)$ is continuous with respect to the parameter $\omega \in (\Omega_L,1)$ and it satisfies $(\ref{ode})$. 
Furthermore, there exists $M$ that depends on $\omega \in (\Omega_L,1)$
such that $-1 < -M < 0 < M < 1$ and 
$$
M =\max_{[0,L]}\phi(x) = -\min_{[0,L]} \phi(x).
$$ 
\end{remark}

Proposition \ref{prop11} and Remark \ref{remodd} justify the existence result stated in Theorem \ref{th-existence} for the odd wave satisfying (\ref{odd-wave}) with $x_0 = 0$.
			
\section{Monotonicity of the period function}
\label{sec-3}

We prove Theorem \ref{th-period} by analysing the period function $T = T(\mathcal{E},\omega)$ introduced in (\ref{period-function}). The period function is associated with the periodic orbits on the phase plane for the system of ordinary differential equations
\begin{equation}
\label{sysode}
\left\{\begin{array}{lllll}
\phi' = \xi, \\\\
\xi'=\frac{\omega \phi}{1-\phi^2}-\frac{\phi^3}{1-\phi^2}.
\end{array}\right.
\end{equation}
It follows from the theory of ordinary differential equations that the solution $\phi$ depends smoothly on the parameter $\mathcal{E} = E(\phi,\phi')$,
where the energy function is 
\begin{equation}
\label{energy-again}
E(\phi,\phi') = \frac{1}{2} (\phi')^2 + V(\phi), \quad V(\phi) = \frac{1}{2} (\omega - \phi^2) + \frac{1}{2} (1-\omega) \log \frac{1-\omega}{1-\phi^2}.
\end{equation}
For $\omega \in (0,1)$, the even wave satisfying (\ref{even-wave}) corresponds to $\mathcal{E} \in (0,\mathcal{E}_{\omega})$ and the odd wave satisfying (\ref{odd-wave}) 
corresponds to $\mathcal{E} \in (\mathcal{E}_{\omega},\infty)$, 
where $\mathcal{E}_{\omega} = E(0,0)$ corresponds to the energy level 
of the pair of homoclinic orbits from the saddle point $(0,0)$ 
which surround the center points $(\pm \sqrt{\omega},0)$. 
We note that 
$$
V(\pm \sqrt{\omega}) = 0 \quad \mbox{\rm and} \quad 
\lim_{\phi \to \pm 1} V(\phi) = +\infty.
$$ 
Furthermore, $V(\phi) \geq 0$ for all $\phi  \in [-1,1]$ and $V'(\phi) > 0$ for all $\phi \in (\sqrt{\omega},1)$.

Section \ref{sec-3-1} gives the proof of $\partial_{\mathcal{E}} T(\mathcal{E},\omega) > 0$, $\mathcal{E} \in (0,\mathcal{E}_{\omega})$ for 
the periodic orbits inside the homoclinic orbit (the even waves). 
Section \ref{sec-3-2} gives the proof of $\partial_{\mathcal{E}} T(\mathcal{E},\omega) < 0$, $\mathcal{E} \in (\mathcal{E}_{\omega},\infty)$ for 
the periodic orbits outside the pair of homoclinic orbits (the odd waves). 
The latter result also holds for $\omega \in (-\infty,0)$ and $\mathcal{E} \in (\mathcal{E}_{\omega},\infty)$, for which $\mathcal{E}_{\omega} = E(0,0)$ corresponds to the energy level of the center point $(0,0)$.

\subsection{Monotonicity for even periodic waves}
\label{sec-3-1}

By the main theorem in \cite{chicone2}, the period function $T(\mathcal{E},\omega)$ is monotonically increasing in $\mathcal{E}$ in $(0,\mathcal{E}_{\omega})$ if $I''(\phi) > 0$ for $\phi \in (0,1)$, where 
\begin{equation}
\label{I}
I(\phi) = \frac{V(\phi)}{[V'(\phi)]^2}.
\end{equation}
Note that the theorem in \cite{chicone2} can be applied because $V(\sqrt{\omega}) = 0$ is properly normalized at the center point $(\sqrt{\omega},0)$. Computing
\begin{equation*}
V'(\phi) = -\frac{\phi (\omega-\phi^2)}{1-\phi^2} , \quad V''(\phi) = -\frac{\omega + (\omega-3) \phi^2 + \phi^4}{(1-\phi^2)^2} , \quad V'''(\phi) =  \frac{ 2 (1-\omega) \phi (\phi^2 + 3)}{(1-\phi^2)^3},
\end{equation*}
we obtain from (\ref{I}) that 
\begin{equation}
\label{I-second}
I''(\phi) = \frac{6V(V'')^2 - 2VV'V'''-3(V')^2V''}{(V')^4} =: \frac{P(\phi)}{(V'(\phi))^4 (1 -\phi^2)^4},
\end{equation}
where
\begin{equation*}
P(\phi) = 3 \phi^2  (\omega - \phi^2)^2 A(\phi) + \left[ 3 A(\phi)^2 + 2 (1-\omega) \phi^2 (3 + \phi^2) (\omega - \phi^2) \right] B(\phi),
\end{equation*}
with
\begin{equation*}
A(\phi) := \omega + (\omega-3) \phi^2 + \phi^4, \qquad B(\phi) := \omega-\phi^2 + (1 - \omega) \log \frac{1-\omega}{1 - \phi^2}.
\end{equation*}
Since $P$ depends on $\phi^2$, we introduce $t = \phi^2$ and redefine 
$P$, $A$, and $B$ as functions of $t$:
\begin{equation}
\label{N}
P(t) = 3 t (\omega - t)^2 A(t) + \left[  3A(t)^2 +  2  (1-\omega) t (3 + t) (\omega - t)  \right ] B(t), \quad t\in [0,1),
\end{equation}
with 
\begin{equation}\label{AB}
A(t) = \omega + (\omega-3) t + t^2, \qquad 
B(t) = \omega-t + (1 - \omega) \log \frac{1-\omega}{1 - t}.
\end{equation}
Figure \ref{fig-plot-P} shows the dependence of $P$ versus $t$ for $\omega = 0.5$. The plot suggests that 
\begin{itemize}
	\item $P(t)$ has a quadruple zero at $t = \omega$, 
	\item $P(t) > 0$ for $t \in (0,\omega) \cup (\omega,1)$. 
\end{itemize}
These facts are proven rigorously in Lemmas \ref{lemma1} and \ref{lemma3} below.

\begin{figure}[htb!]
	\centering
	\includegraphics[width=0.8\textwidth,height=0.4\textheight]{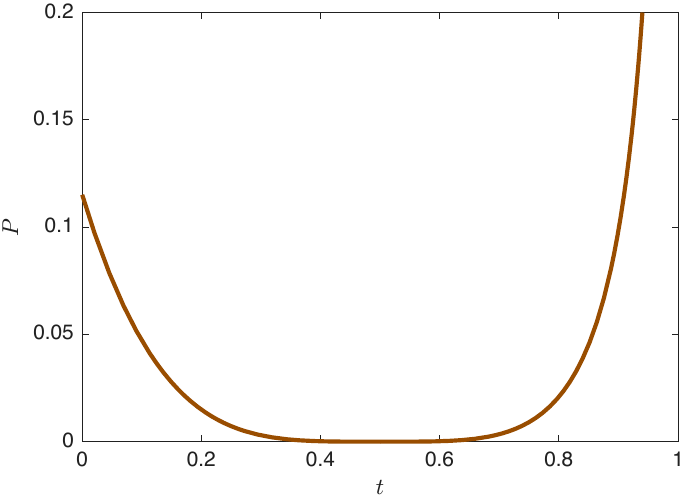} 
	\caption{The dependence of $P$ versus $t$ given by (\ref{N}) for $\omega = 0.5$.}
	\label{fig-plot-P}
\end{figure}

\begin{lemma}\label{lemma1}
	The function $P$ given by (\ref{N}) and (\ref{AB}) is real analytic on $(0,1)$ and it admits a zero of the quadruple order at $t=\omega$, such that
	\begin{equation}
	\label{Taylor-coeff}
	P'(\omega) = P''(\omega) = P'''(\omega) = 0, \quad \quad P^{(4)}(\omega) = \frac{4(9-6\omega-\omega^2)}{1-\omega} >0.
	\end{equation}
	Consequently, there exists $\delta>0$ such that $P(t)>0$ for $t\in [\omega-\delta,\omega +\delta] \backslash \{\omega \}$.
\end{lemma}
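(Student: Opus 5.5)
Analyticity is immediate: $A$ is a polynomial, and $B$ is a polynomial plus $(1-\omega)\log\frac{1-\omega}{1-t}$, which is real analytic for $t<1$. Hence $P$ in (\ref{N}) is real analytic on $(0,1)$, and even on $(-\infty,1)$. The plan is to Taylor expand $A$ and $B$ about $t=\omega$ and collect the lowest-order terms of $P$ in powers of $\tau := t-\omega$.

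First, the expansions. Since $B(\omega)=0$ and $B'(t) = -1 + \frac{1-\omega}{1-t}$, we have $B'(\omega)=0$. Writing $\frac{1-\omega}{1-t} = (1-\tau/(1-\omega))^{-1}$ gives
\[
B = \frac{\tau^2}{2(1-\omega)} + \frac{\tau^3}{3(1-\omega)^2} + \frac{\tau^4}{4(1-\omega)^3} + \mathcal{O}(\tau^5),
\]
so $B$ has a double zero at $t=\omega$. For $A$ we get $A(\omega) = 2\omega^2-2\omega = -2\omega(1-\omega)$ and $A'(\omega) = 3\omega-3$, so that
\[
A = -2\omega(1-\omega) - 3(1-\omega)\tau + \tau^2 .
\]
The bracket in $P$ is $C := 3A^2 - 2(1-\omega)t(3+t)\tau$, where $t = \omega+\tau$.

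Second, the cancellation. The first term of $P$ is $3t\tau^2 A$, which is $\mathcal{O}(\tau^2)$. The second term is $CB$, also $\mathcal{O}(\tau^2)$. At order $\tau^2$ the coefficients are $3\omega A(\omega) = -6\omega^2(1-\omega)$ and $3A(\omega)^2/(2(1-\omega)) = 6\omega^2(1-\omega)$, which cancel. At order $\tau^3$ the same bookkeeping is carried out with the next coefficients of $A$, $t$, $C$ and $B$, and we expect cancellation again; this gives $P'(\omega)=P''(\omega)=P'''(\omega)=0$. At order $\tau^4$ we collect all contributions, namely
\begin{itemize}
\item from $3t\tau^2A$: the $\tau^2$ coefficient of $3tA$;
\item from $CB$: the sums $C_0B_4 + C_1B_3 + C_2B_2$, where $C_j$ and $B_j$ denote the $\tau^j$ coefficients.
\end{itemize}
We then simplify the total to $\frac{9-6\omega-\omega^2}{6(1-\omega)}$, so that $P^{(4)}(\omega) = 24$ times this coefficient matches (\ref{Taylor-coeff}).

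The main obstacle is this order-$\tau^3$ and order-$\tau^4$ algebra. It is routine but error-prone, and it is best done symbolically or by careful grouping in powers of $(1-\omega)$.

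Positivity of the leading coefficient follows from $9-6\omega-\omega^2 = 9 - \omega(6+\omega) > 9-7 > 0$ for $\omega\in(0,1)$. So $P(t) = \frac{P^{(4)}(\omega)}{24}(t-\omega)^4 + \mathcal{O}((t-\omega)^5)$ with a positive leading coefficient. By continuity of the fifth derivative (analyticity), there is $\delta>0$ such that $P(t)>0$ on $[\omega-\delta,\omega+\delta]\setminus\{\omega\}$.

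It is a useful sanity check that this coefficient agrees, up to normalization, with $\omega_2$ from Proposition \ref{prop1}. This is consistent with the small-amplitude period expansion near the center $(\sqrt{\omega},0)$.
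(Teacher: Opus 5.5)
Your route is the same as the paper's. You expand $B$ (double zero at $t=\omega$), $3tA$, and the bracket $C$ (the paper's $G$) in powers of $\tau=t-\omega$, then collect the coefficients of $\tau^2,\tau^3,\tau^4$. Everything you actually computed is correct: the expansions of $A$ and $B$, the $\tau^2$ cancellation, the positivity of $9-6\omega-\omega^2$, and the local positivity argument.

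The shortfall is that the $\tau^3$ cancellation is only ``expected'' and the $\tau^4$ value is announced rather than derived. Since these two coefficients are the whole content of the lemma, they have to be written out.

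They close quickly. The bracket has coefficients $C_0=12\omega^2(1-\omega)^2$, $C_1=2\omega(1-\omega)(15-19\omega)$ and $C_2=(1-\omega)(21-43\omega)$. The first term expands as $3tA=-6\omega^2(1-\omega)-15\omega(1-\omega)\tau-(9-12\omega)\tau^2+\mathcal{O}(\tau^3)$. Hence
\begin{align*}
\tau^3&:\quad -15\omega(1-\omega)+\omega(15-19\omega)+4\omega^2=0,\\
\tau^4&:\quad -(9-12\omega)+\frac{21-43\omega}{2}+\frac{2\omega(15-19\omega)}{3(1-\omega)}+\frac{3\omega^2}{1-\omega}=\frac{9-6\omega-\omega^2}{6(1-\omega)},
\end{align*}
exactly as in the paper.

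Your closing remark is also right: this coefficient coincides with $\omega_2$ of Proposition \ref{prop1}, with $\omega_L$ replaced by $\omega$.
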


	\begin{proof}
		The function $P(t)$ is real analytic on $(0,1)$, because the logarithmic function in $B$ is analytic for $t < 1$ 
		and other functions are polynomials in $t$. The Taylor series 
		of $P$ at $t = \omega$ can be written as 
		\begin{equation*}
		P(t) = \sum_{n=0}^\infty \frac{P^{(n)}(\omega)}{n!}(t-\omega)^n.
		\end{equation*}
		We have $B(\omega) = 0$, and
\begin{equation}
\label{der-B}
B'(t) = -\frac{\omega - t}{1-t}, \quad B''(t) = \frac{1-\omega}{(1-t)^2},
\end{equation}	
which implies 
\begin{equation}
\label{B-exp}
B(t) = \frac{(t-\omega)^2}{2(1-\omega)} + \frac{(t-\omega)^3}{3 (1-\omega)^2} + \frac{(t-\omega)^4}{4 (1-\omega)^3} + \mathcal{O}((t-\omega)^5).
\end{equation}
Furthermore, we define
\begin{align}
G(t) := 3A(t)^2 +  2  (1-\omega) t (3 + t) (\omega - t) \label{G-def}
\end{align}
and expand
\begin{align}
G(t) &= 3 \omega^2 - 12 \omega t + (21 - 4 \omega + \omega^2) t^2 - (20 - 8 \omega) t^3 + 3 t^4  \notag \\
&= 12 \omega^2 (1 -  \omega)^2 + 2\omega (1 - \omega)(15 - 19\omega) (t-\omega) \notag \\
& \quad + 
(1-\omega) (21 - 43 \omega) (t-\omega)^2 + \mathcal{O}((t-\omega)^3). \label{G-exp}
\end{align}	
Similarly, we expand
\begin{align}
3t A(t) &= 3 t (\omega + (\omega-3) t + t^2) \notag \\
&= -6 \omega^2 (1 -  \omega) - 15 \omega (1 - \omega) (t-\omega) - 
(9 - 12 \omega) (t-\omega)^2 + \mathcal{O}((t-\omega)^3).
\label{A-exp}
\end{align}	
Substituting (\ref{B-exp}) into (\ref{N}) yields 
\begin{align*}
P(t) &= (t-\omega)^2 \left[ 3 t A(t) + G(t) \left( \frac{1}{2(1-\omega)} + \frac{(t-\omega)}{3 (1-\omega)^2} + \frac{(t-\omega)^2}{4 (1-\omega)^3} + \mathcal{O}((t-\omega)^3) \right) \right].
\end{align*}	
By using (\ref{G-exp}) and (\ref{A-exp}), we compute coefficients of powers $(t-\omega)$ in $P(t)$:
\begin{align*}
(t-\omega)^2 : \quad & -6 \omega^2 (1 -  \omega) + \frac{12 \omega^2 (1 -  \omega)^2}{2(1-\omega)} = 0, \\
(t-\omega)^3 : \quad & - 15 \omega (1 - \omega) + \frac{2\omega (1 - \omega)(15 - 19\omega)}{2(1-\omega)} + \frac{12 \omega^2 (1 -  \omega)^2}{3 (1-\omega)^2} = 0, \\
(t-\omega)^4 : \quad & - (9 - 12 \omega) + \frac{(1-\omega) (21 - 43 \omega)}{2(1-\omega)} + \frac{2\omega (1 - \omega)(15 - 19\omega)}{3 (1-\omega)^2} + \frac{12 \omega^2 (1 -  \omega)^2}{4 (1-\omega)^3} \\
& = \frac{9-6\omega-\omega^2}{6(1-\omega)},
\end{align*}
This yields (\ref{Taylor-coeff}).	

The remainder of $P(t)$ can be written in the integral form:
\begin{equation*}
P (t)=    \frac{1}{3!} (t-\omega)^4 \int_0^1 (1-s)^3P^{(4)}(\omega+s(t-\omega))\,ds. 
\end{equation*}
and there exists $\delta' >0$, such that $P^{(4)}(t)$ is continuous on $t\in (\omega-\delta',\omega + \delta')$. By taking $\epsilon = \frac{1}{2}P^{(4)}(\omega)$ and $\delta<\delta'$, there is a local strictly positive estimation 
\begin{equation*}
P (t) \geq  \frac{1}{48}P^{(4)}(\omega) (t-\omega)^4 >0, \quad \quad t\in [\omega-\delta,\omega ) \cup ( \omega, \omega + \delta], 
\end{equation*}
which yields the assertion on positivity of $P(t)$ near $t = \omega$. 
\end{proof}

To estimate the global behavior of the function $P(t)$ for $t \in (0,1)$, 
we use the following bounds on the function $B(t)$ obtained from (\ref{der-B}).

\begin{lemma}
	\label{lemma2}
	The function $B$ defined in (\ref{AB}) can be estimated as 
	\begin{equation}\label{B1}
	\frac{(\omega - t)^2}{2(1-t)} \leq B(t) \leq \frac{(\omega - t)^2}{2(1-\omega)}, \quad \quad  t\in (0,\omega), 
	\end{equation}
	and
	\begin{equation}\label{B2}
	B(t) \leq \frac{(\omega - t)^2}{2(1-t)}, \quad \quad t\in (\omega, 1). 
	\end{equation} 
\end{lemma}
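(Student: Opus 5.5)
The plan is to use the integral form of Taylor's theorem for $B$ around $t=\omega$. Since $B(\omega)=0$ and $B'(\omega)=0$ by (\ref{der-B}),
\begin{equation*}
B(t) = \int_\omega^t (t-s) B''(s)\, ds = (1-\omega)\int_\omega^t \frac{t-s}{(1-s)^2}\, ds .
\end{equation*}
This integral can also be evaluated in closed form, but the representation above is enough to get both bounds by comparing $B''(s)$ with constants. The term $(t-s)\,ds$ integrates to $\tfrac12 (t-\omega)^2$, whatever the sign of $t-\omega$, and it has a fixed sign on the interval of integration: it is nonnegative both when $t>\omega$ and when $t<\omega$, since then $s\in(t,\omega)$ and $ds$ is reversed. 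So the whole problem reduces to bounding $(1-s)^{-2}$ between the values at the endpoints $s=t$ and $s=\omega$. This works because $(1-s)^{-2}$ is increasing in $s$ on $(0,1)$.

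\textbf{Case $t\in(0,\omega)$.} Here $s\in[t,\omega]$, so
\begin{equation*}
(1-t)^{-2} \le (1-s)^{-2} \le (1-\omega)^{-2}.
\end{equation*}
Multiplying by the nonnegative weight and integrating gives
\begin{equation*}
\frac{(1-\omega)(\omega-t)^2}{2(1-t)^2} \le B(t) \le \frac{(\omega-t)^2}{2(1-\omega)} .
\end{equation*}
The upper bound is exactly the right-hand inequality in (\ref{B1}). The crude lower bound is weaker than the claimed $\frac{(\omega-t)^2}{2(1-t)}$, because $\frac{1-\omega}{1-t}<1$. So the left inequality of (\ref{B1}) needs a sharper argument, and I expect this to be the main (though mild) obstacle. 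I would handle it by a direct monotonicity argument. Define
\begin{equation*}
D(t) := B(t) - \frac{(\omega-t)^2}{2(1-t)}.
\end{equation*}
Then $D(\omega)=0$, and I would show $D'(t)\le 0$ on $(0,\omega)$, which forces $D\ge0$ there. Differentiating, using $B'(t)=-(\omega-t)/(1-t)$, and putting everything over the common denominator $2(1-t)^2$, the sign of $D'$ should reduce to that of an elementary polynomial in $(\omega-t)$ and $(1-t)$. I expect the outcome to be
\begin{equation*}
D'(t) = -\frac{(\omega-t)^2}{2(1-t)^2},
\end{equation*}
which is manifestly nonpositive; this should be checked in the write-up.

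\textbf{Case $t\in(\omega,1)$.} The same function $D$ gives (\ref{B2}). Here $D(\omega)=0$ and $D'(t)\le0$ on $(\omega,1)$ by the same formula, so $D(t)\le 0$, which is exactly (\ref{B2}). Alternatively, the Taylor representation gives (\ref{B2}) directly: for $s\in[\omega,t]$ we have $(1-s)^{-2}\le(1-t)^{-2}$, hence
\begin{equation*}
B(t) \le \frac{(1-\omega)(t-\omega)^2}{2(1-t)^2}.
\end{equation*}
This is not yet (\ref{B2}) either, since $\frac{1-\omega}{1-t}>1$ here. So the cleanest route for both the left bound in (\ref{B1}) and (\ref{B2}) is the single computation of $D'$, with $D(\omega)=0$. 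The Taylor argument is used only for the upper bound in (\ref{B1}).
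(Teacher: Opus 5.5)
Your proof is correct, and the derivative you flag for checking comes out exactly as you anticipate. Since $B'(t) = -\frac{\omega-t}{1-t}$ and $\frac{d}{dt}\frac{(\omega-t)^2}{2(1-t)} = -\frac{\omega-t}{1-t} + \frac{(\omega-t)^2}{2(1-t)^2}$, the first-order terms cancel and $D'(t) = -\frac{(\omega-t)^2}{2(1-t)^2}$. Hence $D(t) = \int_t^\omega \frac{(\omega-s)^2}{2(1-s)^2}\,ds$, which is nonnegative for $t<\omega$ and nonpositive for $t>\omega$.

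Your route differs from the paper's. The paper uses only the first-order representation $B(t) = \int_t^\omega \frac{\omega-s}{1-s}\,ds$. There the weight $\omega-s$ has a fixed sign on the interval of integration, and only the factor $(1-s)^{-1}$ needs to be bounded by its endpoint values $(1-t)^{-1}$ and $(1-\omega)^{-1}$. That single representation gives all three inequalities at once.

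Your second-order Taylor remainder bounds $(1-s)^{-2}$ instead. As you correctly observe, this reproduces the upper bound in (\ref{B1}) but loses a factor $\frac{1-\omega}{1-t}$ at the other endpoint, which forces your separate monotonicity argument. In exchange, your approach gives an exact integral formula for the gap $B(t) - \frac{(\omega-t)^2}{2(1-t)}$. This is precisely the bracket multiplying $G'(t)$ in (\ref{Pprime}) in the proof of Lemma \ref{lemma3}, whereas the paper's argument only controls it through inequalities.

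You could also drop the Taylor step entirely and obtain the upper bound in (\ref{B1}) by the same device. The difference $\frac{(\omega-t)^2}{2(1-\omega)} - B(t)$ vanishes at $t=\omega$ and has derivative $-\frac{(\omega-t)^2}{(1-t)(1-\omega)} \le 0$, so it is nonnegative on $(0,\omega)$.
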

	
	\begin{proof}
It follows from (\ref{der-B}) that $B$ can be written in the integral form:
		\begin{equation*}
		B(t) = \int_t^\omega \frac{\omega-s}{1-s}\,ds, \quad t \in (0,1).
		\end{equation*}
		For $t\in (0,\omega)$, let $0<t\leq s \leq \omega$, so that $ \frac{1}{1-t} \leq \frac{1}{1-s} \leq \frac{1}{1-\omega}$. Then, we have
		\begin{equation*}
		B(t) \leq \frac{1}{1-\omega} \int_t^\omega (\omega - s)\,ds = \frac{(\omega - t)^2}{2(1-\omega)}
		\end{equation*}
		and 
		\begin{equation*}
		B(t) \geq \frac{1}{1-t} \int_t^\omega (\omega - s)\,ds = \frac{(\omega-t)^2}{2(1-t)}.
		\end{equation*}
		This yields (\ref{B1}). Similarly, for $t\in (\omega,1)$, let $\omega \leq s \leq t < 1$, so that $\frac{1}{1-t} \geq \frac{1}{1-s}$. Then, we have
		\begin{equation*}
		B(t) \leq \frac{1}{1-t} \int_t^\omega (\omega - s)\,ds = \frac{(\omega-t)^2}{2(1-t)}.
		\end{equation*}
This yields (\ref{B2}).
	\end{proof}

We use Lemma \ref{lemma2} to extend Lemma \ref{lemma1} and to guarantee that $P(t)$ is positive for every $t \in (0,1)$. This is obtained by controlling the derivative of $P$ in $t$ separately for $t \in (0,\omega)$ and $t \in (\omega,1)$.

\begin{lemma}\label{lemma3}
	The function $P$ is monotonically decreasing on $(0,\omega)$ and  increasing on $(\omega,1)$. 
\end{lemma}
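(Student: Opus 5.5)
The plan is to differentiate $P$ directly and use the bounds on $B$ from Lemma \ref{lemma2} to fix the sign of $P'(t)$ on each of the intervals $(0,\omega)$ and $(\omega,1)$. Write $P(t) = 3t(\omega-t)^2A(t) + G(t)B(t)$ with $G$ as in (\ref{G-def}). By (\ref{der-B}), $B'(t) = -(\omega-t)/(1-t)$, so
\begin{equation*}
P'(t) = (\omega-t)\,R(t) + G'(t)\,B(t), \qquad
R(t) := 3(\omega-t)A(t) + 3t(\omega-t)A'(t) - 6tA(t) - \frac{G(t)}{1-t}.
\end{equation*}
Here $R$ is rational with only the pole at $t=1$, and $G'$ is an explicit quartic-derived cubic polynomial. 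The only transcendental ingredient left is the single factor $B(t)$ multiplying $G'(t)$. The goal is to show $P'<0$ on $(0,\omega)$ and $P'>0$ on $(\omega,1)$.

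\emph{Step 1: sign of $G'$.} First determine the sign of $G'(t)$ on $(0,\omega)$ and on $(\omega,1)$, splitting into subintervals if needed. On each piece, the sign decides which side of Lemma \ref{lemma2} to use.
\begin{itemize}
\item On $(0,\omega)$ we want an upper bound for $P'$. Where $G'>0$, replace $B$ by $(\omega-t)^2/(2(1-\omega))$; where $G'<0$, replace it by $(\omega-t)^2/(2(1-t))$.
\item On $(\omega,1)$ we want a lower bound for $P'$, but only the upper bound (\ref{B2}) is available. So this works directly where $G'<0$. Where $G'>0$, use $B\ge 0$, which holds since $V\ge 0$ and hence $B\ge0$.
\end{itemize}

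\emph{Step 2: reduce to polynomial inequalities.} After the substitution, $P'$ is bounded by $(\omega-t)$ times an explicit rational function of $(t,\omega)$ whose denominator is positive. The problem then becomes showing that a two-variable polynomial keeps a fixed sign on $\{0<t<\omega<1\}$, and separately on $\{0<\omega<t<1\}$. I would handle this by factoring out the powers of $(t-\omega)$ that must be present, then checking positivity of the quotient. For that check I would expand in $s=t-\omega$, or in $t$ and $1-t$, and show all coefficients have one sign. A cruder bound such as $t<1$ or $\omega<1$ can be used term by term where it is enough.

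\emph{Main obstacle.} Lemma \ref{lemma1} shows $P'$ vanishes to third order at $t=\omega$, while the two bounds in Lemma \ref{lemma2} agree only to order $(\omega-t)^2$. After substitution, the leading cubic term of $P'$ near $t=\omega$ may therefore pick up an error of the same order and with the wrong sign. If that happens, the bound is too lossy in a neighbourhood of $\omega$, and I would repair it in two ways.
\begin{itemize}
\item Use Lemma \ref{lemma1} to cover $[\omega-\delta,\omega+\delta]$, and apply the crude estimate only away from $\omega$.
\item Better, sharpen Lemma \ref{lemma2} by one order, integrating $(\omega-s)/(1-s)$ against the next Taylor term in (\ref{B-exp}). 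This gives bounds such as
\begin{equation*}
B(t)\ \ge\ \frac{(t-\omega)^2}{2(1-\omega)} + \frac{(t-\omega)^3}{3(1-\omega)^2},
\end{equation*}
with the appropriate direction on each side of $\omega$.
\end{itemize}
Alternatively, write $B$ exactly as $\int_t^\omega (\omega-s)/(1-s)\,ds$ and isolate the combination $G'B + (\omega-t)\,\cdot$(leading part). Differentiating once more, as in Chicone-type arguments, removes the logarithm entirely and leaves a polynomial inequality that can be checked directly.

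Once the sign of $P'$ is established on both intervals, Lemma \ref{lemma3} follows. Together with $P(\omega)=0$, it also yields $P>0$ on $(0,1)\setminus\{\omega\}$.
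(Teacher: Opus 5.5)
Your overall strategy matches the paper's: differentiate, substitute the bounds of Lemma~\ref{lemma2} according to the sign of $G'$, and reduce to polynomial sign checks. On $(0,\omega)$, and on the part of $(\omega,1)$ where $G'<0$, it works exactly as you describe. The cleanest bookkeeping is the identity
\[
(\omega-t)R(t)=-\frac{(\omega-t)^2}{2(1-t)}\,G'(t)-\frac{(\omega-t)^3}{1-t}\bigl[6+(1-\omega)t-6t^2\bigr],
\]
so that $P'=\bigl[B-\frac{(\omega-t)^2}{2(1-t)}\bigr]G'-\frac{(\omega-t)^3}{1-t}\bigl[6+(1-\omega)t-6t^2\bigr]$. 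Since $B-\frac{(\omega-t)^2}{2(1-t)}=\mathcal{O}((\omega-t)^3)$, the loss you feared in your ``main obstacle'' does not destroy the sign on $(0,\omega)$. There the case $G'>0$ reduces to $3(t-1)^3-(1-\omega)t(3t-1)<0$.

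The step that fails is your treatment of $\{t\in(\omega,1):G'(t)>0\}$ by $B\ge 0$. There $BG'$ is of exact order $(t-\omega)^2$, and it is needed to cancel the order-$(t-\omega)^2$ part of $(\omega-t)R$. Dropping it leaves
\[
P'(t)\ \ge\ (\omega-t)R(t)=\frac{t(t-\omega)^2}{1-t}\bigl[-12t^2+(31-7\omega)t+3\omega-15\bigr].
\]
At $t=\omega$ the bracket equals $-(1-\omega)(15-19\omega)$, while $G'(\omega)=2\omega(1-\omega)(15-19\omega)$. So for every $\omega<15/19$ your lower bound is negative just to the right of $\omega$, exactly where $G'>0$. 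For example, at $\omega=\tfrac12$, $t=0.6$ one has $G'\approx 2.41$ and $(\omega-t)R\approx-0.020$, whereas $P'\approx 0.008$.

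The failure is not local: for $\omega=0.1$ it persists on all of $(0.1,0.65)$. Hence the fallback ``Lemma~\ref{lemma1} near $\omega$, crude bound elsewhere'' cannot close it, and Lemma~\ref{lemma1} gives no explicit $\delta$ anyway.

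The missing ingredient is a lower bound for $B$ on $(\omega,1)$. Contrary to your remark that only (\ref{B2}) is available, it follows from the same integral representation. For $\omega<s<t$ one has $\frac{1}{1-s}\ge\frac{1}{1-\omega}$, hence $B(t)\ge\frac{(t-\omega)^2}{2(1-\omega)}$; equivalently $-\log(1-x)\ge x+\frac{x^2}{2}$ with $x=\frac{t-\omega}{1-\omega}$. Inserting this where $G'>0$ gives
\[
P'(t)\ge\frac{2(t-\omega)^3}{(1-t)(1-\omega)}\bigl[3(1-t)^3+(1-\omega)t(3t-1)\bigr]>0,
\]
which is the same polynomial as on $(0,\omega)$.

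The paper avoids the case split. It combines (\ref{B2}) with $G'<2(1-\omega)[6+(1-\omega)t-6t^2]$ to get $P'>[6+(1-\omega)t-6t^2]\,\{2(1-\omega)B-(t-\omega)^2\}$, and concludes from $\sum_{n\ge3}x^n/n>0$. Your ``sharpen Lemma~\ref{lemma2}'' remedy points in the right direction, since every Taylor truncation of $B$ is a lower bound for $t>\omega$. But the plan as written prescribes $B\ge0$ precisely in the case where it fails. Your last alternative is too vague to rely on: removing $B$ by differentiation would require dividing by $G'$, which has zeros in both $(0,\omega)$ and $(\omega,1)$ when $\omega<15/19$.
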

	
	\begin{proof}
To show that $P'(t) < 0$ for $t \in (0,\omega)$ and $P'(t) > 0$ for $t \in (\omega, 1)$, we use (\ref{N}) rewritten as 
		\begin{equation*}
		P(t) = Q(t) + B(t) G(t), 
		\end{equation*}
where $Q(t) := 3 t (\omega - t)^2 A(t)$ with $A$ and $B$ defined in (\ref{AB}) and $G$ defined in (\ref{G-def}). By using (\ref{der-B}) for $B'(t)$, as well as  
\begin{align*}
Q'(t) &=3(t-\omega) \left[ 5t^3+(\omega-12)t^2+ \omega (9-2\omega) t-\omega^2 \right], \\
G'(t) &= 2 \left[ 6 t^3 + 6 (2\omega - 5) t^2 + (\omega^2 - 4 \omega + 21) t - 6\omega \right],  
\end{align*}
we obtain 	
\begin{align}
P'(t) &= Q'(t) + B'(t)G(t) + B(t)G'(t)  \notag \\
&= \left[ B(t) - \frac{(\omega-t)^2}{2(1-t)} \right] G'(t) + \frac{(\omega-t)^2}{1-t} \left[ \frac{1}{2} G'(t) + \frac{1-t}{(\omega - t)^2} (Q'(t) + B'(t) G(t)) \right] \notag \\
&= \left[ B(t) - \frac{(\omega-t)^2}{2(1-t)} \right ]G'(t) -\frac{(\omega-t)^3}{1-t} \left[ 6 + (1-\omega) t - 6t^2 \right], \label{Pprime}
\end{align}
where the last identity is derived directly from 
\begin{align*}
& \frac{1-t}{(\omega - t)^2} (Q'(t) + B'(t) G(t)) \\
&= 
\frac{1}{t - \omega} \left[ 3 (1-t) \left[ 5t^3+(\omega-12)t^2+ \omega (9-2\omega) t-\omega^2 \right] + G(t) \right]  \\
&= \frac{1}{t - \omega} \left[ -12 t^4 + (31 + 5 \omega) t^3 + (7 \omega^2 - 28 \omega - 15) t^2 + 3 \omega (5 - \omega) t \right] \\
&= -12 t^3 + (31 - 7 \omega) t^2 + 3 (\omega - 5) t 
\end{align*}
and
\begin{align*}
\frac{1}{2} G'(t) + \frac{1-t}{(\omega - t)^2} (Q'(t) + B'(t) G(t))
&= -6 t^3 + (5 \omega + 1) t^2 + (\omega^2 - \omega + 6) t - 6\omega \\
&= (\omega - t) \left[ 6 t^2 + (\omega - 1) t - 6 \right].
\end{align*}

\underline{For $t\in (0,\omega)$,} we use the estimate (\ref{B1})  and obtain 
		\begin{equation}\label{B3}
		0 \leq  B(t) - \frac{(\omega-t)^2}{2(1-t)} \leq \frac{(\omega -t)^2}{2(1-\omega)} - \frac{(\omega-t)^2}{2(1-t)} = \frac{(\omega-t)^3}{2(1-t)(1-\omega)}.  
		\end{equation}
Since 
$$
6 + (1-\omega) t - 6t^2 \geq \min\{ 6, 1-\omega\} > 0, \quad  t \in [0,1],
$$ 
for every $\omega \in (0,1)$, it follows from (\ref{Pprime}) and the lower bound in (\ref{B3}) that $P'(t) < 0$ for $t \in (0,\omega)$ if $G'(t) < 0$. 
On the other hand, if $G'(t) > 0$, then we use the upper bound in (\ref{B3}) and obtain 
\begin{align*}
P'(t) &\leq \frac{(\omega-t)^3}{(1-t)(1-\omega)} \left[ \frac{1}{2} G'(t) - (1-\omega)(6+t-\omega t - 6t^2)  \right] \\
&= \frac{2 (\omega-t)^3}{(1-t)(1-\omega)} \left[ 3 t^3 + 3 (\omega - 4) t^2 + (10-\omega) t - 3 \right].
\end{align*}
We show that the last expression in the brackets is negative, which yields 
$P'(t) < 0$ for $t \in (0,\omega)$ if $G'(t) > 0$. Indeed, we have 
\begin{align*}
3 t^3 + 3 (\omega - 4) t^2 + (10-\omega) t - 3 = 3(t-1)^3 - (1-\omega) t (3t - 1)
\end{align*}
which implies 
\begin{align*}
3(t-1)^3 - (1-\omega) t (3t - 1) \leq 3 (t-1)^3 < 0, \quad  \frac{1}{3} \leq t < 1
\end{align*}
and 
\begin{align*}
3(t-1)^3 - (1-\omega) t (3t - 1) = (t-1) \left[ 3 (t-1)^2 + \frac{1-\omega}{1-t} t (3t - 1) \right] < 0, \quad  0 < t \leq \frac{1}{3}, 
\end{align*}
since $\frac{1-\omega}{1-t} < 1$ for $t \in (0,\omega)$ and 
\begin{align*}
 3 (t-1)^2 + \frac{1-\omega}{1-t} t (3t - 1) \geq 
  3 (t-1)^2 + t (3t - 1) = 3 - 7t + 6 t^2 \geq \frac{4}{3}, \qquad 0 \leq t \leq \frac{1}{3}.
\end{align*}

\underline{For $t\in (\omega,1)$,} we use again that 
		\begin{equation*}
\frac{1}{2} G'(t) - (1-\omega)(6+ (1-\omega) t - 6t^2) = 2 \left [ 3(t-1)^3 - (1-\omega ) t (3t- 1) \right ] < 0, 
		\end{equation*}
which yields 
		\begin{equation*}
		\frac{G'(t)}{6+ (1-\omega) t - 6t^2} < 2(1-\omega). 
		\end{equation*}
By using (\ref{B2}), we know that $B(t) -  \frac{(\omega - t)^2}{2(1-t)}\leq 0$, 
so that we can estimate (\ref{Pprime}) for $t \in (\omega,1)$ as follows: 
		\begin{align*}
		P'(t) &= (6 + (1-\omega) t  - 6t^2) \left \{ \left [ B(t) - \frac{(\omega-t)^2}{2(1-t)} \right ] \frac{G'(t)}{6+ (1-\omega) t - 6t^2}  -\frac{(\omega-t)^3}{1-t} \right \} \\
		& > (6 + (1-\omega) t  - 6t^2) \left \{  2(1-\omega) \left [ B(t) - \frac{(\omega-t)^2}{2(1-t)} \right ] -\frac{(\omega-t)^3}{1-t} \right \} \\
				& =  (6 + (1-\omega) t  - 6t^2) \left \{  2 (1-\omega) B(t) - (t-\omega)^2 \right \}. 
		\end{align*}
By using the definition of $B$ in (\ref{AB}) and the variable $x := \frac{t- \omega}{1-\omega} \in (0,1)$, we get Taylor series expansion 
		\begin{align*}
2 (1-\omega) B(t) - (t-\omega)^2 &= 2 (1-\omega)^2 \log\frac{1-\omega}{1-t} - 2(1-\omega) (t-\omega) - (t-\omega)^2 \\
& =  2 (1-\omega)^2 \left[ -\log(1-x) - x -\frac{1}{2} x^2 \right] \\
&= 2 (1-\omega)^2 \sum_{n=3}^{\infty} \frac{x^n}{n},
\end{align*}
which is strictly positive for $x \in (0,1)$. Hence, $P'(t) > 0$ for $t\in (\omega, 1)$. 
\end{proof}

The period function $T(\mathcal{E},\omega)$ given by (\ref{period-function}) 
can be rewritten for the even periodic waves explicitly by
\begin{equation}
\label{even-period}
T_\mathrm{even}(\mathcal E,\omega) = 2 \int_{m}^{M}\frac{d \phi }{\sqrt{2\mathcal E + (1-\omega)\log(1-\phi^2) - (1-\omega) \log(1-\omega) + \phi^2 - \omega}},
\end{equation}
where 
$$
m := \min\limits_{x\in [-\frac{L}{2},\frac{L}{2}]} \phi (x) \in (0,\sqrt{\omega}) \quad \mbox{\rm and} \quad 
M:= \max\limits_{x\in [-\frac{L}{2},\frac{L}{2}]} \phi(x)\in (\sqrt{\omega},1)
$$ 
are given by roots of 
$V(\phi) = \mathcal{E}$ for $\mathcal{E}\in (0,\mathcal{E}_{\omega})$, see Remark \ref{boundphi}. By using Lemma \ref{lemma3}, we prove monotonicity of the period function in $\mathcal{E}$ stated in Theorem \ref{th-period}.

\begin{proposition}
	\label{prop-even-period}
	For every $\omega \in (0,1)$, the period function $T_\mathrm{even}(\mathcal E,\omega)$ given by (\ref{even-period}) is monotonically increasing 
	in $\mathcal{E} \in (0,\mathcal{E}_{\omega})$ such that 
	$$
\lim_{\mathcal{E}\to 0} 	T_\mathrm{even}(\mathcal E,\omega) = 2\pi \sqrt{\frac{1-\omega}{2\omega}}, \quad 
\lim_{\mathcal{E}\to \mathcal{E}_{\omega}} T_\mathrm{even}(\mathcal E,\omega) = +\infty,
	$$  
\end{proposition}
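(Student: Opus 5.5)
Monotonicity follows from Chicone's criterion, and the two limits from local analysis near the center and the saddle. By Lemmas \ref{lemma1} and \ref{lemma3}, $P(t)>0$ for all $t\in(0,\omega)\cup(\omega,1)$. Indeed, $P(\omega)=0$ and $P$ is strictly decreasing on $(0,\omega)$ and strictly increasing on $(\omega,1)$, so it is positive away from $t=\omega$. Lemma \ref{lemma1} shows in addition that the zero at $t=\omega$ is isolated and of even order with positive leading coefficient.

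Since $I''(\phi)=P(\phi^2)/[(V'(\phi))^4(1-\phi^2)^4]$ and $V'(\phi)\neq 0$ for $\phi\in(0,1)\setminus\{\sqrt{\omega}\}$, we get $I''>0$ on $(0,\sqrt{\omega})\cup(\sqrt{\omega},1)$. At $\phi=\sqrt{\omega}$ the expression is of the form $0/0$, but $I$ extends analytically there because $V$ has a nondegenerate minimum: $V''(\sqrt{\omega})=2\omega/(1-\omega)>0$. I would check $I''(\sqrt{\omega})>0$ from the Taylor expansion. Both $P$ and $(V')^4$ vanish to fourth order in $(\phi-\sqrt\omega)$, so $I''(\sqrt\omega)$ is the ratio of the leading coefficients, and the positive value $P^{(4)}(\omega)$ from (\ref{Taylor-coeff}) shows this ratio is positive. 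Since $V(\sqrt{\omega})=0$ and $V'(\sqrt{\omega})=0$, the normalization required by \cite{chicone2} holds. Hence Chicone's theorem gives $\partial_{\mathcal{E}}T_{\rm even}>0$ on $(0,\mathcal{E}_\omega)$: every orbit inside the homoclinic loop has $\phi\in(0,1)$, so the whole range $[m,M]$ lies where $I''>0$.

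For the limit as $\mathcal{E}\to0$, the orbits shrink to the center $(\sqrt{\omega},0)$. The period tends to $2\pi/\sqrt{V''(\sqrt{\omega})}$, the period of the linearization $\phi''=-V''(\sqrt\omega)(\phi-\sqrt\omega)$. With $V''(\sqrt\omega)=2\omega/(1-\omega)$ this equals $2\pi\sqrt{(1-\omega)/(2\omega)}$. This also agrees with $\omega_L$, since $L=2\pi\sqrt{(1-\omega_L)/(2\omega_L)}$ is equivalent to $\omega_L=2\pi^2/(L^2+2\pi^2)$. To make the limit rigorous, I would substitute $\phi=\sqrt\omega+\sqrt{2\mathcal E/V''(\sqrt\omega)}\,\sin\theta$ (or use the standard result for nondegenerate centers) in (\ref{even-period}) and pass to the limit by dominated convergence.

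For the limit as $\mathcal{E}\to\mathcal{E}_\omega$, we have $m\to0$, and $\mathcal{E}_\omega-V(\phi)\approx\frac12|V''(0)|\phi^2$ near $\phi=0$ with $V''(0)=-\omega/(1-\omega)<0$. Then $\int_m \frac{d\phi}{\sqrt{2(\mathcal E-V)}}$ behaves like $\int_m d\phi/\sqrt{c\phi^2+2(\mathcal E-\mathcal E_\omega)}$ with $c=|V''(0)|$, which diverges logarithmically. So $T_{\rm even}\to+\infty$; a lower bound by Fatou's lemma suffices. The main obstacle is the rigorous verification of $I''>0$ at $\phi=\sqrt{\omega}$ itself, which needs the careful matching of the fourth-order vanishing of $P$ and of $(V')^4(1-\phi^2)^4$ described above.
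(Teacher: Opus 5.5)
Your proposal is correct and follows the paper's own proof. Positivity of $P$ on $(0,1)\setminus\{\omega\}$ from Lemmas \ref{lemma1} and \ref{lemma3} gives $I''>0$, Chicone's theorem then gives monotonicity, and the two limits come from the linearization at the center $(\sqrt{\omega},0)$ and the infinite period of the homoclinic orbit to the saddle $(0,0)$. Your handling of $\phi=\sqrt{\omega}$ via $P^{(4)}(\omega)>0$ and your Fatou lower bound simply make explicit what the paper asserts; note the small slip $V''(0)=-\omega$ (not $-\omega/(1-\omega)$), which is harmless since only $V''(0)<0$ is used.
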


	\begin{proof}
		Lemma \ref{lemma3} implies that $P(t) > 0$ for $t \in (0,1) \backslash \{\omega\}$, which yields $I''(\phi) > 0$ for $\phi \in (0,1)$ by (\ref{I-second}). Since $V(\phi) \geq 0$ and $V(\sqrt{\omega}) = 0$, we can apply the main theorem from \cite{chicone2} by using the translated coordinate $\varphi = \phi - \sqrt{\omega}$. Since $I''(\phi) > 0$, the main theorem of \cite{chicone2} states that the period function $T_\mathrm{even}(\mathcal E,\omega)$ is monotonically increasing in $\mathcal{E} \in (0,\mathcal{E}_{\omega})$ for every $\omega \in (0,1)$. The limit for $T_{\mathrm{even}}(\mathcal E,\omega)$ as 
		$\mathcal{E} \to 0$ follows from the linearization of the center 
		point $(\sqrt{\omega},0)$.
		The divergence of $T_\mathrm{even}(\mathcal E,\omega)$ as $\mathcal{E} \to \mathcal{E}_{\omega}$ follows from the infinite period of the homoclinic orbit to the saddle equilibrium point $(0,0)$. 
	\end{proof} 

\subsection{Monotonicity for odd periodic waves}
\label{sec-3-2}

The period function $T(\mathcal{E},\omega)$ given by (\ref{period-function}) 
can be rewritten for the odd periodic waves explicitly by
\begin{equation}
\label{odd-period}
T_\mathrm{odd}(\mathcal E, \omega) = 4 \int_0^{M} \frac{d \phi }{\sqrt{2\mathcal E + (1-\omega)\log(1-\phi^2) - (1-\omega) \log(1-\omega) + \phi^2 - \omega}},
\end{equation}
where 
$$
M := -\min\limits_{x\in [-\frac{L}{2},\frac{L}{2}]} \phi (x) \in (0,\sqrt{\omega}) = \max\limits_{x\in [-\frac{L}{2},\frac{L}{2}]} \phi(x)\in (\sqrt{\omega},1)
$$ 
is a positive root of 
$V(\phi) = \mathcal{E}$ for $\mathcal{E}\in (\mathcal{E}_{\omega},\infty)$, see Remark \ref{remodd}. 
The proof of monotonicity of the period function in $\mathcal{E}$ is easier for the odd periodic waves. The following proposition justifies the result stated in Theorem \ref{th-period}.

\begin{proposition}
	\label{prop-odd-period}
		For every $\omega \in (0,1)$, the period function $T_\mathrm{odd}(\mathcal E,\omega)$ given by (\ref{odd-period}) is monotonically decreasing in $\mathcal{E} \in (\mathcal{E}_{\omega},\infty)$ such that 
		$$
\lim_{\mathcal{E}\to \mathcal{E}_{\omega}} T_\mathrm{odd}(\mathcal E,\omega) = +\infty,  \quad \lim_{\mathcal{E} \to \infty} T_\mathrm{odd}(\mathcal E,\omega) = 0.
		$$
\end{proposition}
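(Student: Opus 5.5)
The plan is to remove the $\mathcal{E}$-dependence of the integration limit by rescaling, and then read off monotonicity from convexity of the potential written in the variable $t=\phi^2$. First, since $V'(\phi)>0$ for $\phi\in(\sqrt{\omega},1)$ (and for $\phi\in(0,1)$ if $\omega\le 0$), the turning point $M$ is a strictly increasing $C^1$ function of $\mathcal{E}$ on $(\mathcal{E}_\omega,\infty)$, with $\mathcal{E}=V(M)$. Moreover, $M\to 1$ as $\mathcal{E}\to\infty$. It therefore suffices to prove that $T_{\rm odd}$ is strictly decreasing as a function of $M$. Substituting $\phi=Ms$ with $s\in(0,1)$ in (\ref{odd-period}) gives
\begin{equation*}
T_{\rm odd}=4\int_0^1\frac{ds}{\sqrt{2\,g(M^2;s)}},\qquad g(\tau;s):=\frac{F(\tau)-F(s^2\tau)}{\tau},
\end{equation*}
where $F(t):=-\frac{t}{2}-\frac{1-\omega}{2}\log(1-t)$. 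Here I use that $V(\phi)=F(\phi^2)+{\rm const}$, so $\mathcal{E}-V(Ms)=F(M^2)-F(M^2s^2)$.

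The key step is to write $g$ as an average of $F'$:
\begin{equation*}
g(\tau;s)=\frac{1}{\tau}\int_{s^2\tau}^{\tau}F'(\sigma)\,d\sigma=\int_{s^2}^{1}F'(\tau r)\,dr .
\end{equation*}
Differentiating in $\tau$ then gives
\begin{equation*}
\partial_\tau g(\tau;s)=\int_{s^2}^1 r\,F''(\tau r)\,dr,\qquad F''(t)=\frac{1-\omega}{2(1-t)^2}>0,
\end{equation*}
which is valid for every $\omega<1$, including $\omega<0$. Hence, for each fixed $s\in(0,1)$, the integrand $(2g(M^2;s))^{-1/2}$ is strictly decreasing in $M$, and so $T_{\rm odd}$ is strictly decreasing in $M$, and therefore in $\mathcal{E}$. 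Differentiation under the integral sign would be justified by dominated convergence on compact subsets of $M$-values. The bound $g(\tau;s)\ge c(1-s^2)$ would control the endpoint $s\to1$, but it requires care when $F'$ changes sign: for $\omega>0$, $F'$ is negative on $(0,\omega)$. The same argument would also give the $C^1$ regularity claimed in Theorem \ref{th-period}.

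For the limits, as $\mathcal{E}\to\infty$ we have $\tau=M^2\to1$. Then $F'(\tau r)\to F'(r)$, which is nonintegrable at $r=1$ because $F(t)\to+\infty$ as $t\to1$. So $g(\tau;s)\to+\infty$ for each $s<1$, and since $g$ is increasing in $\tau$, monotone convergence yields $T_{\rm odd}\to0$. As $\mathcal{E}\to\mathcal{E}_\omega$ with $\omega\in(0,1)$, the orbit approaches the pair of homoclinic orbits through the saddle $(0,0)$. In the rescaled form, $g(\tau;s)\to\tau^{-1}(V(0)-V(\sqrt{\tau}s))+{\rm const}$, which vanishes quadratically as $s\to0$ because $V'(0)=0$, and this gives a logarithmically divergent integral, i.e. $T_{\rm odd}\to+\infty$. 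I expect the main obstacle to be not the monotonicity itself, which the averaging identity handles cleanly, but making these uniform integrability and limit arguments rigorous near $s=0$ and near $s=1$.
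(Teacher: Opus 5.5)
Your proof is correct and follows the paper's route. Both rescale to a fixed interval ($t=qu$ in the paper, $\phi=Ms$ in yours, so $u=s^2$) and use convexity of $W$ to show that $(W(q)-W(qu))/q$ is increasing in $q$; the paper does this by differentiating under the integral and using that $tW'(t)-W(t)$ is increasing, while you use the equivalent averaging identity $\partial_\tau g=\int_{s^2}^1 rF''(\tau r)\,dr$.

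Your pointwise monotonicity in $\tau$ avoids differentiating under the integral sign, and it lets monotone convergence settle both limits directly. One slip: the limiting function as $\mathcal{E}\to\mathcal{E}_\omega$ is exactly $\tau_0^{-1}\bigl(V(0)-V(\sqrt{\tau_0}\,s)\bigr)$, with $\tau_0=M^2$ at $\mathcal{E}=\mathcal{E}_\omega$, and there is no additive constant.
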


\begin{proof}
Using the same transformation $t = \phi^2$ as in Section \ref{sec-3-1}, we
redefine $V(\phi)$ in (\ref{energy-again}) as 
		\begin{equation*}
		W(t) := \frac{1}{2} (\omega - t) + \frac{1}{2}(1 - \omega) 
		\log \frac{1-\omega}{1-t}, \quad t\in (0,1).
		\end{equation*} 
Similarly, we redefine $M \in (\sqrt{\omega},1)$ as $q := M^2 \in (\omega, 1)$. Since $\mathcal E = V(M) = W(q)$, we use the change of variables $t = \phi^2$ for $t \in (0,q)$ and $t = q u$ for $u \in (0,1)$ and rewrite the integral (\ref{odd-period}) in the equivalent form:
		\begin{align*}
T_\mathrm{odd}(\mathcal E, \omega) &= 4 \int_0^{M} \frac{d\phi}{\sqrt{2[V(M) - V(\phi) ]}} \\
&= \sqrt{2}\int_0^{q} \frac{dt}{\sqrt{t[W(q) - W(t)]}} \\
		&=  \int_0^{1} \frac{\sqrt{2q}}{\sqrt{u[W(q) - W(qu)]}}\,du.
		\end{align*} 
Since $V'(\phi) > 0$ for $\phi \in (\sqrt{\omega},1)$, we have $W'(q) > 0$. The chain rule
\begin{equation*}
\frac{\partial T_{\rm odd}}{\partial \mathcal E} = \frac{\partial T_{\rm odd}}{\partial q} \left ( \frac{\partial \mathcal E }{\partial q} \right )^{-1} = \frac{1}{W'(q)} \frac{\partial T_{\rm odd}}{\partial q}
\end{equation*} 	
implies that for a fixed $\omega \in (0,1)$, monotonicity of $T_{\rm odd}(\mathcal{E},\omega)$ in $\mathcal{E}$ and $q$ coincide. 
Although the integral for $T_{\rm odd}(\mathcal{E},\omega)$ is weakly singular at $u = 0$ and $u = 1$, the derivative of $T_{\rm odd}(\mathcal{E},\omega)$ in $q$ yields also weakly singular integrals and, hence, it can be computed by pointwise differentiation as in 
		\begin{equation*}
		\frac{\partial T_{\rm odd}}{\partial q} = \frac{1}{\sqrt{2q}}  \int_0^{1} \frac{du}{\sqrt{u[W(q) - W(qu)]}} - \frac{\sqrt{2q}}{2}  \int_0^{1} \frac{W'(q) - uW'(qu)}{\sqrt{u [W(q) - W(qu)]^3}}\,du, 
		\end{equation*} 
where the second integral remains weakly singular at $u = 1$ since 
$W(q) - W(uq) = \mathcal{O}(1-u)$ and $W'(q) - uW'(qu) = \mathcal O(1-u)$ as $u\to 1$. The function $W(t)$ is strictly convex since
		\begin{equation*}
		W'(t) = \frac{t-\omega}{2(1-t)} \quad \quad W''(t) = \frac{1-\omega}{2(1-t)^2} > 0. 
		\end{equation*} 
If $F(t):=tW'(t)-W(t)$, then $F'(t) = tW''(t) > 0$ for $t \in (0,1)$, so that $F(q) > F(qu)$ for every $u\in (0,1)$. This implies for $u \in (0,1)$ that 
		\begin{equation*} 
qW'(q)-W(q) >  quW'(qu)-W(qu), \quad \Rightarrow \quad 
		W(q) - W(qu) < q [ W'(q) - u W'(qu)].
		\end{equation*} 
		Since $\sqrt{u [W(q) - W(qu)]} > 0$ for $u \in (0,1)$, it follows that 
		\begin{equation*}
		\frac{1}{\sqrt{q}} \frac{1}{\sqrt{u[W(q) - W(qu)]}} < \sqrt{q} \frac{W'(q) - uW'(qu)}{\sqrt{u}\left [W(q) - W(qu) \right ]^{3/2}}, \quad u \in (0,1),
		\end{equation*} 
which proves that 
$$
		\frac{\partial T_{\rm odd}}{\partial q} < 0, \quad q \in (\omega,1),
$$
This yields the desired monotonicity in $\mathcal{E}$ by the chain rule. 		The divergence of $T_\mathrm{odd}(\mathcal E,\omega)$ as $\mathcal{E} \to \mathcal{E}_{\omega}$ follows from the infinite period of the homoclinic orbit to the saddle equilibrium point $(0,0)$.
The zero limit of $T_\mathrm{odd}(\mathcal E,\omega)$ as $\mathcal{E} \to \infty$  follows from (\ref{odd-period}) by the dominated convergence theorem since $M \in (\sqrt{\omega},1)$ is finite.
	\end{proof}

\begin{remark}
	The result of Proposition \ref{prop-odd-period} is true for $\omega \in (-\infty,0)$ with the only change 
	$$
	\lim_{\mathcal{E} \to \mathcal{E}_{\omega}} T_{\rm odd}(\mathcal{E},\omega) = \frac{2\pi}{\sqrt{|\omega|}}. 
	$$
	which is computed from the linearization of the center point $(0,0)$ for $\omega \in (-\infty,0)$. All other computations are identical to the proof of Proposition \ref{prop-odd-period}. 
\end{remark}
			
\section{Spectral analysis near the periodic waves}
\label{sec-4}
		
Consider the Hessian operator $\mathcal{L} = H''(\phi) + \omega Q''(\phi)$ defined in $(\ref{matrixop})$ as an operator on $\mathbb{L}^2_{\rm per}$ with the domain in $\mathbb{H}^2_{\rm per}$. Since $\mathcal{L}$ is a diagonal composition 
of the Schr\"{o}dinger operators $\mathcal{L}_{+}$ and $\mathcal{L}_{-}$ in $L^2_{\rm per}$ with the domain in $H^2_{\rm per}$, 
the spectrum of $\mathcal{L}$ is a superposition of the spectra 
of $\mathcal{L}_+$ and $\mathcal{L}_-$. According to \cite{Magnus}, 
the spectrum of either $\mathcal{L}_+$ or $\mathcal{L}_-$ consists
of an unbounded sequence of real eigenvalues
		\[
		\lambda_0 < \lambda_1 \leq \lambda_2 < \lambda_3 \leq \lambda_4\;\; ...\; < \lambda_{2n-1} \leq
		\lambda_{2n}\; \cdots, 
		\]
where equality means that $\lambda_{2n-1} = \lambda_{2n}$  is a
double eigenvalue. By \cite[Theorem 3.1.2(ii)]{eastham}, 
if $\varphi$ is an eigenfunction associated to the eigenvalue $\lambda_{2n-1}$ or
$\lambda_{2n}$, then $\varphi$ has exactly $2n$ zeroes on the periodic domain. 

To characterize the Morse index of $\mathcal{L}_{\pm}$ denoted by $n(\mathcal{L}_{\pm})$ and the nullity index of $\mathcal{L}_{\pm}$ denoted by $z(\mathcal{L}_{\pm})$, we use the following theorem, see \cite[Theorem 3.1]{neves}.
		
\begin{theorem}
	\label{teo12}
	Let $\mathcal{M} = -\partial_x^2+Q(x)$ be a linear Schr\"{o}dinger operator with the even, $L$-periodic, bounded potential $Q$ and let $\{ \varphi_1, \varphi_2\}$ be linearly independent solutions of $\mathcal{M} \varphi = 0$ satisfying 
\begin{equation}
\label{ivp}
	\left\{ \begin{array}{l} \varphi_1(0) = 1, \\
	\varphi_1'(0) = 0, \end{array} \right. \quad \mbox{\rm and} \quad 
	\left\{ \begin{array}{l} \varphi_2(0) = 0, \\
\varphi_2'(0) = 1. \end{array} \right.	
\end{equation}
Assume that there exists $\theta \in \mathbb{R}$ such that 
\begin{equation}
\label{theta}
	\varphi_1(x+L) = \varphi_1(x) + \theta \varphi_2(x), \quad 
\mbox{\rm and} \quad  \varphi_2(x+L) = \varphi_2(x), 
\end{equation}
and that the $L$-periodic eigenfunction $\varphi_2$ has two zeros on the periodic domain. The zero eigenvalue of $\mathcal{M}$ in $L^2_{\rm per}$ is simple if $\theta \neq 0$ and double if $\theta = 0$. It is the second eigenvalue of $\mathcal{M}$ if $\theta \geq 0$ and the third eigenvalue of $\mathcal{M}$ if $\theta < 0$. 
\end{theorem}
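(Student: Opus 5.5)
The plan is to reduce the statement to Floquet theory for the Hill equation $\mathcal{M}\varphi = 0$ together with Sturm oscillation, which is the standard route to \cite[Theorem 3.1]{neves}.

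\textbf{Step 1 (monodromy structure).} Since $Q$ is even and $L$-periodic, $\varphi_1$ is even and $\varphi_2$ is odd. The Wronskian of $\varphi_1,\varphi_2$ is identically $1$, so the monodromy matrix has determinant $1$. Because $\varphi_2$ is $L$-periodic, $1$ is a Floquet multiplier, hence both multipliers equal $1$. With respect to the basis $\{\varphi_1,\varphi_2\}$ the monodromy matrix therefore takes the Jordan form
$$\begin{pmatrix} 1 & 0 \\ \theta & 1 \end{pmatrix},$$
which is exactly the content of (\ref{theta}).

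\textbf{Step 2 (multiplicity).} If $\theta = 0$, both $\varphi_1$ and $\varphi_2$ are $L$-periodic, so $z(\mathcal{M}) = 2$. If $\theta \neq 0$, any periodic solution $c_1\varphi_1 + c_2\varphi_2$ must satisfy $c_1\theta\varphi_2 = 0$, which forces $c_1 = 0$, so $z(\mathcal{M}) = 1$.

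\textbf{Step 3 (position of the eigenvalue).} By the oscillation result quoted from \cite{eastham}, an eigenfunction with two zeros on the period corresponds to $\lambda_1$ or $\lambda_2$. Since $\varphi_2$ has exactly two zeros, $0 \in \{\lambda_1,\lambda_2\}$. When $\theta = 0$ we get $\lambda_1 = \lambda_2 = 0$, and $0$ is the second eigenvalue (counting $\lambda_0$ as the first).

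When $\theta \neq 0$, I must decide whether $0 = \lambda_1$ or $0 = \lambda_2$. For this I use the discriminant $\Delta(\lambda) = \varphi_1(L;\lambda) + \varphi_2'(L;\lambda)$ of the family $\mathcal{M} - \lambda$. At $\lambda = 0$ we have $\Delta(0) = 2$, since the trace of the monodromy matrix is $2$.

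\begin{itemize}
\item Within $(\lambda_0,\lambda_1)$, i.e. the first instability gap, we have $\Delta < 2$, and $\Delta$ decreases through the band $(\lambda_0,\lambda_1)$.
\item It is $\le -2$ on $(\lambda_1,\lambda_2)$? No: the ordering is that $\Delta = 2$ at $\lambda_0$ and $\Delta(\lambda) > 2$ in $(\lambda_1,\lambda_2)$.
\end{itemize}

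So I will not rely on this sign bookkeeping for the discriminant. The cleaner way is to track the off-diagonal entry directly. At an edge $\lambda_1$ or $\lambda_2$ of the periodic gap where the odd eigenfunction is periodic, the even solution acquires the drift $\theta(\lambda)$, where $\theta(\lambda) = \varphi_1'(L;\lambda)$ after normalizing $\varphi_2'(0) = 1$. Differentiating the monodromy in $\lambda$ and using the variation of constants formula shows that $\frac{d}{d\lambda}\varphi_2(L;\lambda)$ has a definite sign determined by $\int_0^L \varphi_2^2$. Evenness of $Q$ factors $\Delta - 2$ as a product involving $\varphi_2(L;\lambda)$ and $\varphi_1'(L;\lambda)$. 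This yields the key relation: $\theta$ has the sign of $\Delta(\lambda) - 2$ on the side of the gap interior.

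Concretely, $0 = \lambda_1$ (the lower edge, so the gap $(\lambda_1,\lambda_2)$ lies above $0$) corresponds to $\theta > 0$, and $0 = \lambda_2$ corresponds to $\theta < 0$. Consequently $0$ is the second eigenvalue when $\theta \ge 0$ and the third when $\theta < 0$, as claimed.

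\textbf{Main obstacle.} The main obstacle is fixing the sign convention linking $\theta$ to the side of the gap. I would first attempt it through the identity
$$\frac{d}{d\lambda}\varphi_2(L;\lambda)\Big|_{\lambda=0} = -\int_0^L \varphi_2^2\,dx \cdot (\text{const}),$$
combined with the factorization $\Delta^2 - 4 = 4\varphi_2(L)\varphi_1'(L)$, which holds for even potentials because $\varphi_1(L) = \varphi_2'(L)$. This factorization gives a sign relation between $\theta$ and the location of the instability interval adjacent to $0$.
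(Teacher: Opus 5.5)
The paper gives no proof of this statement; it quotes it from [Neves, Theorem 3.1]. Your Floquet-discriminant route is the standard argument behind that result. Steps 1 and 2 are fine, and so is the oscillation reduction to $0\in\{\lambda_1,\lambda_2\}$.

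\textbf{The gap.} It lies in the only nontrivial part: deciding \emph{which} edge $0$ is. You never actually determine the sign that settles this.
Your identity for $\partial_\lambda\varphi_2(L;\lambda)$ carries an unspecified ``(const)''. If that constant had the opposite sign, your argument would give the opposite conclusion.
The ``key relation'' that $\theta$ has the sign of $\Delta-2$ on the gap side is vacuous, since $\Delta>2$ throughout $(\lambda_1,\lambda_2)$.
The first bullet is wrong: $(\lambda_0,\lambda_1)$ is not a gap. It consists of the first two bands, separated by the (possibly degenerate) antiperiodic gap where $\Delta\le -2$, so $\Delta$ is not monotone there.
As written, ``$\theta>0$ iff $0=\lambda_1$'' is asserted rather than derived.

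\textbf{How to close it.} Let $\varphi_j(x;\lambda)$ solve $\mathcal{M}\varphi=\lambda\varphi$ with the data (\ref{ivp}). Differentiate in $\lambda$ and use variation of constants (the Wronskian is $1$):
\[
\partial_\lambda\varphi_j(x;\lambda)\big|_{\lambda=0}=\int_0^x\bigl[\varphi_1(x)\varphi_2(s)-\varphi_1(s)\varphi_2(x)\bigr]\varphi_j(s)\,ds .
\]
Insert $\varphi_1(L)=\varphi_2'(L)=1$, $\varphi_2(L)=0$, $\varphi_1'(L)=\theta$. This gives $\partial_\lambda\varphi_2(L;0)=\int_0^L\varphi_2^2\,dx>0$. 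Your factorization then yields
\[
\Delta^2-4=4\varphi_1'(L;\lambda)\varphi_2(L;\lambda)=4\theta\lambda\int_0^L\varphi_2^2\,dx+O(\lambda^2).
\]
Equivalently, $\Delta'(0)=\theta\int_0^L\varphi_2^2\,dx$, which can be obtained directly from the same formula and needs no evenness.

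Since $\Delta(0)=2$, for $\theta>0$ one has $\Delta>2$ just above $0$ and $|\Delta|<2$ just below. Now $|\Delta|<2$ immediately below $\lambda_1$ and immediately above $\lambda_2$, while $\Delta>2$ on $(\lambda_1,\lambda_2)$. This forces $0=\lambda_1$ (the second eigenvalue) for $\theta>0$, and $0=\lambda_2$ (the third) for $\theta<0$.

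Note that the conclusion depends on the normalization $\varphi_1\varphi_2'-\varphi_1'\varphi_2=1$ fixed by (\ref{ivp}): replacing $\varphi_1$ by $-\varphi_1$ flips the sign of $\theta$. This is why the sign bookkeeping has to be done explicitly rather than asserted.
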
	

\begin{remark}
Since the linear operator $\mathcal{L}_+$ is related to the linearization 
of the second-order equation (\ref{odephi}) on the periodic orbit with the profile $\phi$, the two solutions in Theorem \ref{teo12} are constructed from the first invariant (\ref{energy}) and the 
parameter $\theta$ can be computed from the derivative of the period 
function $T(\mathcal{E},\omega)$ with respect to $\mathcal{E}$. See \cite[Section 3.2]{GeyerPel25}. 	
\end{remark}
		
\subsection{Spectral analysis of even periodic waves}
		
We proceed separately with the analysis of the Schr\"{o}dinger operators $\mathcal{L}_+$ and $\mathcal{L}_-$ defined in $(\ref{matrixop})$ and computed at the even waves of Theorem \ref{th-existence} with the profile $\phi$ satisfying (\ref{even-wave}).

\begin{proposition}\label{propR0}
$n(\mathcal{L}_+) = z(\mathcal{L}_+) = 1$, that is, 
$0$ is a simple eigenvalue of $\mathcal{L}_{+}$ associated with the eigenfunction $\phi'$, and there is only one negative eigenvalue, which is simple. In addition, the remainder of the spectrum of $\mathcal{L}_+$ in $L^2_{\rm per}$ consists of a discrete set of positive eigenvalues with finite multiplicities.
\end{proposition}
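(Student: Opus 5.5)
Our plan is to apply Theorem \ref{teo12} to $\mathcal{M}=\mathcal{L}_+$, with the even wave translated so that $x_0=0$ and $\phi(0)=M$ is its maximum, so that the potential $1+(\omega-1)\frac{1+\phi^2}{(1-\phi^2)^2}$ is even, $L$-periodic and bounded thanks to Remark \ref{boundphi} ($\phi\le M<1$). The first task is to identify the derivative of (\ref{odephi}) with $\mathcal{L}_+$. Differentiating $\phi''=-V'(\phi)$ in $x$ gives $-\partial_x^2\phi'-V''(\phi)\phi'=0$. A direct computation from the formula for $V''$ in Section \ref{sec-3-1} gives $-V''(\phi)=1+(\omega-1)\frac{1+\phi^2}{(1-\phi^2)^2}$, so $\mathcal{L}_+\phi'=0$. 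Hence $\varphi_2=\phi'/\phi''(0)$ is the odd solution normalized by (\ref{ivp}); note $\phi''(0)=-V'(M)\neq0$ since $M\in(\sqrt{\omega},1)$. It is $L$-periodic and has exactly two zeros on $\mathbb{T}_L$, at the maximum and minimum of $\phi$, because $\phi$ is monotone between $m$ and $M$ on each half-period.

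The second step is to build $\varphi_1$ and compute $\theta$. We take the family of periodic orbits parametrized by the energy $\mathcal{E}$ at fixed $\omega$, with $\phi(\cdot;\mathcal{E})$ satisfying $\phi(0)=M(\mathcal{E})$ and $\phi'(0)=0$, whose period is $T(\mathcal{E},\omega)$. By the $C^1$ smoothness in Theorem \ref{th-period}, we may differentiate $\phi(x+T(\mathcal{E},\omega);\mathcal{E})=\phi(x;\mathcal{E})$ with respect to $\mathcal{E}$. The function $\partial_{\mathcal{E}}\phi$ is even and solves $\mathcal{L}_+ \partial_{\mathcal{E}}\phi=0$, and it satisfies
\[
\partial_{\mathcal{E}}\phi(x+L)+\partial_{\mathcal{E}}T\,\phi'(x+L)=\partial_{\mathcal{E}}\phi(x).
\]
Normalizing with $\partial_{\mathcal{E}}\phi(0)=M'(\mathcal{E})=1/V'(M)>0$, we set $\varphi_1=V'(M)\,\partial_{\mathcal{E}}\phi$. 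Using $\phi'=\phi''(0)\varphi_2=-V'(M)\varphi_2$ and the periodicity of $\phi'$, we obtain $\varphi_1(x+L)=\varphi_1(x)+\theta\varphi_2(x)$ with
\[
\theta=V'(M)^2\,\partial_{\mathcal{E}}T(\mathcal{E},\omega).
\]
We will double-check this sign bookkeeping carefully, since the conclusion hinges on it. This step is the main delicate point, and we would follow \cite[Section 3.2]{GeyerPel25} for it.

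Third, Theorem \ref{th-period} (Proposition \ref{prop-even-period}) gives $\partial_{\mathcal{E}}T>0$ on $(0,\mathcal{E}_\omega)$. Therefore $\theta>0$, and Theorem \ref{teo12} yields that $0$ is a simple eigenvalue of $\mathcal{L}_+$ and is its second eigenvalue. It follows that exactly one eigenvalue, $\lambda_0$, is negative, and it is simple by the ordering $\lambda_0<\lambda_1$. So $n(\mathcal{L}_+)=z(\mathcal{L}_+)=1$ with kernel spanned by $\phi'$.

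Finally, $\mathcal{L}_+$ is self-adjoint on $L^2_{\rm per}$ with domain $H^2_{\rm per}$ and has a bounded potential. Its resolvent is compact by the compact embedding $H^2_{\rm per}\hookrightarrow L^2_{\rm per}$, exactly as in the proof of Proposition \ref{prop1}. Hence the rest of the spectrum consists of isolated positive eigenvalues $\lambda_2\le\lambda_3\le\cdots\to\infty$ of finite multiplicity, consistent with \cite{Magnus}.
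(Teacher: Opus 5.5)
Your proposal is correct and follows the paper's proof: you apply Theorem~\ref{teo12} with $\varphi_2=-\phi'/V'(\phi_L(\omega))$ and $\varphi_1=V'(\phi_L(\omega))\,\partial_{\mathcal{E}}\phi$, obtain $\theta=[V'(\phi_L(\omega))]^2\,\partial_{\mathcal{E}}T$, and conclude from $\partial_{\mathcal{E}}T>0$ (Proposition~\ref{prop-even-period}), and your sign bookkeeping for $\theta$ is right. Your extra checks fill in details the paper leaves implicit: that $-V''(\phi)$ is exactly the potential of $\mathcal{L}_+$, that $\phi'$ has exactly two zeros, and that the resolvent is compact.
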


\begin{proof} 
	On comparison with $\mathcal{M}$ in Theorem \ref{teo12}, we have 
\begin{equation}
\label{Q-def}
	Q = 1 + (\omega - 1) \frac{1+ \phi^2}{(1-\phi^2)^2},
\end{equation}
	where $0 < \phi < 1$ is the spatial profile of the $L$-periodic orbit in Theorem \ref{th-existence} satisfying (\ref{even-wave}) with $x_0 = 0$ and $\omega \in (\omega_L,1)$. Hence, $Q$ is even, $L$-periodic, and bounded. 
	
	Consider the family of periodic orbits of the second-order equation (\ref{odephi}) associated with the period function $T(\mathcal{E},\omega)$ for the energy level $\mathcal{E} = E(\phi,\phi')$ given by the first invariant (\ref{energy}) with $\mathcal{E}\in (0,\mathcal{E}_{\omega})$. Due to monotonicity 
	of the mapping $\mathcal{E}\to T(\mathcal{E},\omega)$ for fixed $\omega \in (0,1)$ in Theorem \ref{th-period}, there exists a unique $\mathcal{E} = \mathcal{E}_L(\omega)$ of $T(\mathcal{E}_L(\omega),\omega) = L$ for a fixed spatial period $L > 0$ and $\omega \in (\omega_L,1)$. We further define $\phi_L(\omega) \in (0,1)$ as a root of $V(\phi) = \mathcal{E}$ for $\mathcal{E} = \mathcal{E}_L(\omega)$. Two roots exist for the maximum and minimum of the spatial profile $\phi$, see Remark \ref{boundphi}. Since 
	$$
	V'(\phi) = -\frac{\phi(\omega - \phi^2)}{1-\phi^2},
	$$ 
	we have $V'(\phi_L(\omega)) \neq 0$ for either choice for $\phi_L(\omega)$. 
	Equations (\ref{odephi}) and (\ref{energy}) imply that 
\begin{equation}
\label{property-energy}
	\phi''(0) = -V'(\phi_L(\omega)) \quad \mbox{\rm and} \quad 
	\frac{\partial \phi(0)}{\partial \mathcal{E}} \biggr|_{\mathcal{E} = \mathcal{E}_L(\omega)} = \frac{1}{V'(\phi_L(\omega))},
\end{equation}
where the family of periodic orbits parameterized by $\mathcal{E}$ is restricted to even functions by using 
the translational invariance of the second-order equation (\ref{odephi}).
	
Since $\mathcal{L}_+$ is a linearized operator for (\ref{odephi}), we obtain two linearly independent solutions of $\mathcal{L}_+ \varphi = 0$ in Theorem \ref{teo12} by using 
\begin{equation}
\label{two-solutions}
	\varphi_1(x) = \frac{\partial \phi(x)}{\partial \mathcal{E}} \biggr|_{\mathcal{E} = \mathcal{E}_L(\omega)} V'(\phi_L(\omega)), \quad \varphi_2(x) = -\frac{\phi'(x)}{V'(\phi_L(\omega))}.
\end{equation}
Since $\phi$ is even, we obtain (\ref{ivp}) from (\ref{property-energy}).
The second solution $\varphi_2$ is $L$-periodic and has two zeros on the periodic domain according to the assumption of Theorem \ref{teo12}. Computing the first solution $\varphi_1$ after the period $L$, we obtain 
$$
	\varphi_1(L) =   \frac{\partial \phi(L)}{\partial \mathcal{E}} \biggr|_{\mathcal{E} = \mathcal{E}_L(\omega)} V'(\phi_L(\omega)) \quad \mbox{\rm and} \quad \varphi_1'(L) =  \frac{\partial \phi'(L)}{\partial \mathcal{E}} \biggr|_{\mathcal{E} = \mathcal{E}_L(\omega)} V'(\phi_L(\omega))
	=: \theta.
$$
Since $\phi(T(\mathcal{E},\omega)) = \phi(0)$ and 
$\phi'(T(\mathcal{E},\omega)) = 0$, taking derivative of these equations in $\mathcal{E}$ at the energy level $\mathcal{E} = \mathcal{E}_L(\omega)$ 
implies that $\varphi_1(L) = 1$ and 
$$
\theta = - \frac{\partial T}{\partial \mathcal{E}} \biggr|_{\mathcal{E} = \mathcal{E}_L(\omega)} \phi''(0) V'(\phi_L(\omega)) = 
\frac{\partial T}{\partial \mathcal{E}} \biggr|_{\mathcal{E} = \mathcal{E}_L(\omega)} \left[ V'(\phi_L(\omega)) \right]^2, 
$$
where we have used (\ref{property-energy}) again. The $L$-periodicity of $Q$ implies that $\varphi_1$ satisfies (\ref{theta}) with the sign of $\theta$ given by the sign of the derivative of the mapping $\mathcal{E} \to T(\mathcal{E},\omega)$ at $\mathcal{E} = \mathcal{E}_L(\omega)$. Since $\theta > 0$ by Proposition \ref{prop-even-period}, Theorem \ref{teo12} proves the assertion. 	
\end{proof}

\begin{remark}
	\label{smoothcurve1}		 	
Let $L>0$ be fixed. Using the implicit function theorem and the fact that $\Ker(\mathcal{L}_{+})={\rm Span}(\phi')$ with $\phi'$ being odd, it is possible to prove that the mapping $\omega \mapsto \phi \in H_{\rm per,e}^2$ is $C^1$ for every $\omega \in (\omega_L,1)$. In addition, differentiating  (\ref{odephi}) with respect to $\omega$ yields the derivative equation:
\begin{equation}
\label{der-omega}
\mathcal{L}_{+} \frac{d\phi}{d\omega} = - \frac{\phi}{1-\phi^2}.
\end{equation}
This improves Proposition \ref{prop2}, where the mapping 
$\omega \mapsto \phi \in H_{\rm per,e}^2$ is only stated to be continuous 
for every $\omega \in (\omega_L,1)$.
\end{remark}
		 
\begin{proposition}
	\label{propL2} 
$n(\mathcal{L}_-) = 0$ and $z(\mathcal{L}_-) = 1$, that is, 
	$0$ is a simple eigenvalue of $\mathcal{L}_{-}$ associated with the eigenfunction $\phi$ and the remainder of the spectrum of $\mathcal{L}_-$ in $L^2_{\rm per}$ consists of a discrete set of positive eigenvalues with finite multiplicities.
\end{proposition}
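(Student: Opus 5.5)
First I will check that $\phi$ lies in the kernel of $\mathcal{L}_-$. Since $\frac{1}{1-\phi^2}\phi = \phi + \frac{\phi^3}{1-\phi^2}$, we get
$$
\mathcal{L}_- \phi = -\phi'' + \phi + (\omega-1)\frac{\phi}{1-\phi^2} = -\phi'' + \frac{\omega - \phi^2}{1-\phi^2}\,\phi ,
$$
and this vanishes by (\ref{odephi}). Hence $\phi \in \Ker(\mathcal{L}_-)$.

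By Remark \ref{boundphi} the even wave satisfies $0<m\le \phi \le M<1$, so $\phi$ is strictly positive on $\mathbb{T}_L$. The potential $1+(\omega-1)(1-\phi^2)^{-1}$ is bounded, smooth, even and $L$-periodic. The spectrum of $\mathcal{L}_-$ in $L^2_{\rm per}$ is therefore the ordered sequence $\lambda_0<\lambda_1\le\lambda_2<\cdots$ recalled at the beginning of Section \ref{sec-4}, citing \cite{Magnus}. Moreover, an eigenfunction for $\lambda_{2n-1}$ or $\lambda_{2n}$ has exactly $2n$ zeros by \cite[Theorem 3.1.2(ii)]{eastham}, so only the ground state $\lambda_0$ can have a sign-definite eigenfunction. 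Since $\phi>0$ has no zeros, $0=\lambda_0$. The ground state is simple, which gives $z(\mathcal{L}_-)=1$ and $n(\mathcal{L}_-)=0$. The remaining eigenvalues satisfy $\lambda_k>\lambda_0=0$ for $k\ge1$. They are isolated with finite multiplicity because $H^2_{\rm per}$ embeds compactly into $L^2_{\rm per}$, so $\mathcal{L}_-$ has compact resolvent.

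As an independent check of positivity, I would use the factorization $\mathcal{L}_- = -\phi^{-1}\partial_x\,\phi^2\,\partial_x\,\phi^{-1}$, which is valid because $\phi>0$ and $\mathcal{L}_-\phi=0$. For $f\in H^1_{\rm per}$, writing $f=\phi g$ gives
$$
\langle \mathcal{L}_- f, f\rangle_{L^2_{\rm per}} = \int_{\mathbb{T}_L} \phi^2 (g')^2\,dx \ge 0 .
$$
Equality holds only if $g$ is constant, which again yields $n(\mathcal{L}_-)=0$ and $\Ker(\mathcal{L}_-)={\rm Span}(\phi)$.

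I do not expect a real obstacle in this argument. The only point that needs care is strict positivity of $\phi$, which is what makes the ground-state argument and the factorization legitimate. Remark \ref{boundphi} guarantees it for the even family; for the odd family it fails, and that case has to be treated differently.
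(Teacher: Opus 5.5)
Your proof is correct and follows the paper's argument: $\mathcal{L}_-\phi=0$ with $\phi>0$ forces $0$ to be the simple lowest eigenvalue by Floquet/oscillation theory, and the discreteness of the spectrum follows from the compact embedding $H^2_{\rm per}\hookrightarrow L^2_{\rm per}$. Your extra ground-state factorization, $\langle \mathcal{L}_- f,f\rangle_{L^2_{\rm per}}=\int_{\mathbb{T}_L}\phi^2 (g')^2\,dx$ for $f=\phi g$, is also valid. It gives both $n(\mathcal{L}_-)=0$ and $\Ker(\mathcal{L}_-)={\rm Span}(\phi)$ directly, without using the oscillation theorem.
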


\begin{proof} 
	Since $0<\phi<1$ we obtain from the definition of $\mathcal{L}_{-}$ that 
	$$
\mathcal{L}_{-}=- \partial_x^2+ \frac{\omega-\phi^2}{1-\phi^2}.
	$$ 
	Since $\phi$ is positive and satisfies $(\ref{ode})$, we obtain that $\mathcal{L}_{-}\phi=0$. By standard Floquet theory in \cite{Magnus}, we deduce that zero is the first eigenvalue of $\mathcal{L}_{-}$ which is simple.  Again, the last part of the proposition is obtained from the fact that $\mathcal{L}_-$ is a self-adjoint operator and the compact embedding $H_{per}^2\hookrightarrow L_{per}^2.$
\end{proof}

Propositions $\ref{propR0}$ and $\ref{propL2}$ imply the following result for the case of even periodic waves.

\begin{corollary}
	\label{propL} 
The Hessian operator $\mathcal{L}$ defined by (\ref{matrixop}) in  $\mathbb{L}^2_{\rm per}$ with domain $\mathbb{H}^2_{\rm per}$ has one negative eigenvalue which is simple. Zero is a double eigenvalue with associated eigenfunctions $(\phi',0)$ and $(0,\phi)$.  In addition, the remainder of the spectrum consists of a discrete set of positive eigenvalues with finite multiplicities.
\end{corollary}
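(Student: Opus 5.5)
This follows directly from the block-diagonal structure of $\mathcal{L}$ in (\ref{matrixop}). Since $\mathcal{L}=\mathrm{diag}(\mathcal{L}_+,\mathcal{L}_-)$ acts on $\mathbb{L}^2_{\rm per}=L^2_{\rm per}\times L^2_{\rm per}$ with domain $\mathbb{H}^2_{\rm per}$, I will first note that $(f,g)$ is an eigenvector of $\mathcal{L}$ with eigenvalue $\lambda$ if and only if $\mathcal{L}_+f=\lambda f$ and $\mathcal{L}_-g=\lambda g$. Hence $\sigma(\mathcal{L})=\sigma(\mathcal{L}_+)\cup\sigma(\mathcal{L}_-)$. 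The eigenspace of $\mathcal{L}$ for $\lambda$ is the direct sum of $\Ker(\mathcal{L}_+-\lambda)\times\{0\}$ and $\{0\}\times\Ker(\mathcal{L}_--\lambda)$, so multiplicities add. In particular,
$$n(\mathcal{L})=n(\mathcal{L}_+)+n(\mathcal{L}_-),\qquad z(\mathcal{L})=z(\mathcal{L}_+)+z(\mathcal{L}_-).$$

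Next I will substitute the indices already obtained. Proposition \ref{propR0} gives $n(\mathcal{L}_+)=z(\mathcal{L}_+)=1$ with $\Ker(\mathcal{L}_+)={\rm Span}(\phi')$. Proposition \ref{propL2} gives $n(\mathcal{L}_-)=0$ and $z(\mathcal{L}_-)=1$ with $\Ker(\mathcal{L}_-)={\rm Span}(\phi)$. Therefore $\mathcal{L}$ has exactly one negative eigenvalue, namely the simple negative eigenvalue of $\mathcal{L}_+$, with eigenvector of the form $(\chi,0)$. Zero is a double eigenvalue with eigenspace spanned by $(\phi',0)$ and $(0,\phi)$.

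For the rest of the spectrum, both propositions state that the remaining spectra of $\mathcal{L}_\pm$ consist of positive isolated eigenvalues with finite multiplicities. This holds because of self-adjointness and the compact embedding $H^2_{\rm per}\hookrightarrow L^2_{\rm per}$, which gives compact resolvent. A union of two such discrete sets is again discrete: any bounded interval contains finitely many eigenvalues of each operator, and each has finite multiplicity. Hence $\mathcal{L}$ also has compact resolvent, and its remaining spectrum is a discrete set of positive eigenvalues with finite multiplicities.

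There is no real obstacle here. The only point to state explicitly is that the diagonal structure makes multiplicities additive, so that no cancellation or coupling can occur between the two components.
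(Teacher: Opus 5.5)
Your proof is correct and follows the same route as the paper, which obtains the corollary directly from the diagonal structure $\mathcal{L}=\mathrm{diag}(\mathcal{L}_+,\mathcal{L}_-)$ (so that the spectrum of $\mathcal{L}$ is the superposition of the spectra of $\mathcal{L}_\pm$) together with Propositions \ref{propR0} and \ref{propL2}. You also state explicitly that multiplicities add, giving $n(\mathcal{L})=1+0$ and $z(\mathcal{L})=1+1$; the paper leaves this bookkeeping implicit.
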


\subsection{Spectral analysis of odd periodic waves}

We proceed separately with the analysis of the Schr\"{o}dinger operators $\mathcal{L}_+$ and $\mathcal{L}_-$ defined in $(\ref{matrixop})$ and computed at the odd waves of Theorem \ref{th-existence} with the profile $\phi$ satisfying (\ref{odd-wave}). 
		 
\begin{proposition}\label{propL11} 
$n(\mathcal{L}_+) = 2$ and $z(\mathcal{L}_+) = 1$, that is, 
	$0$ is a simple eigenvalue of $\mathcal{L}_{+}$ associated with the eigenfunction $\phi'$, and there are two negative simple eigenvalues. The remainder of the spectrum of $\mathcal{L}_+$ in $L^2_{\rm per}$ consists of a discrete set of positive eigenvalues with finite multiplicities.
\end{proposition}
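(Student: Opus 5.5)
The plan is to mirror the proof of Proposition \ref{propR0}, applying Theorem \ref{teo12} to $\mathcal{L}_+$ at the odd wave. There is one twist. With $x_0=0$ the odd profile satisfies $\phi(0)=0$ and $\phi'(0)=M'\neq 0$, so the natural normalization point is shifted. The potential
$$
Q = 1+(\omega-1)\frac{1+\phi^2}{(1-\phi^2)^2}
$$
depends only on $\phi^2$, so it is even about $x=0$ and also about $x=L/4$, where $\phi$ attains its maximum $M$. I would therefore translate $x \mapsto x + L/4$, which leaves the spectrum unchanged. In this frame $\phi$ is even with $\phi(0)=M$, $\phi'(0)=0$, and $Q$ is even, $L$-periodic and bounded, since $|\phi|\le M<1$ by Remark \ref{remodd}.

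Next I construct the two solutions of $\mathcal{L}_+\varphi=0$ as in (\ref{two-solutions}):
$$
\varphi_1 = V'(M)\,\partial_{\mathcal{E}}\phi\big|_{\mathcal{E}=\mathcal{E}_L(\omega)}, \qquad \varphi_2=-\frac{\phi'}{V'(M)}.
$$
Here the even family of translated odd orbits is parameterized by $\mathcal{E}\in(\mathcal{E}_\omega,\infty)$ with $\phi(0)=M(\mathcal{E})$, the positive root of $V=\mathcal{E}$, and $V'(M)\neq 0$ because $M>\sqrt{\omega}$. For $\omega\le 0$ we have $V'>0$ on $(0,1)$, so this holds there as well. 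The normalization (\ref{ivp}) then follows from (\ref{property-energy}). Differentiating $\phi(T(\mathcal{E},\omega))=\phi(0)$ and $\phi'(T)=0$ in $\mathcal{E}$ gives $\varphi_1(L)=1$ and
$$
\theta=\partial_{\mathcal{E}}T\,[V'(M)]^2.
$$
By Proposition \ref{prop-odd-period} and the remark following it, $\theta<0$ for every $\omega\in(\Omega_L,1)$. Theorem \ref{teo12} would then say that zero is simple with eigenfunction $\phi'$ and is the third eigenvalue, so $n(\mathcal{L}_+)=2$. Simplicity of $\lambda_0$ is automatic. The second negative eigenvalue $\lambda_1$ is simple because $\lambda_1<\lambda_2=0$, given that $0$ is simple and sits in position three.

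The main obstacle is the hypothesis in Theorem \ref{teo12} that $\varphi_2$ has exactly two zeros on the periodic domain. For the odd wave, $\phi'$ vanishes only at the extrema $x=\pm L/4$, which are two points per period. The reason is that along the outer orbit $\phi'=0$ forces $V(\phi)=\mathcal{E}$, i.e. $\phi=\pm M$, and each is reached once per period. So the count is fine, but I would state this explicitly, since the orbit encircles both centers and the saddle.

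A consistency check supports the count. $\phi'$ with two zeros must belong to the pair $\{\lambda_1,\lambda_2\}$, and having $\theta<0$ place it at $\lambda_2$ is consistent with that. The discreteness and positivity of the rest of the spectrum follow from self-adjointness and the compact embedding $H^2_{\rm per}\hookrightarrow L^2_{\rm per}$, as in Proposition \ref{propL2}.
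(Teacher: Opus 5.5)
Your proposal is correct and is essentially the paper's Proof I: you translate by a quarter period so the profile is even about its extremum, build $\varphi_1,\varphi_2$ as in (\ref{two-solutions}), obtain $\theta=\partial_{\mathcal{E}}T\,[V'(\phi_L(\omega))]^2<0$ from Proposition \ref{prop-odd-period}, and apply Theorem \ref{teo12} to place zero as the third eigenvalue. Your explicit checks that $\phi'$ has exactly two zeros per period, and that $\lambda_1<\lambda_2=0$ forces the second negative eigenvalue to be simple, fill in details the paper leaves implicit; the paper also gives an alternative parity-based Proof II.
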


\begin{proof} 
We can prove the assertion in two different ways.

\underline{Proof I.} The potential $Q$ in the linear operator $\mathcal{M}$ of Theorem \ref{teo12} is defined by the same expression (\ref{Q-def}), 
where $-1 < \phi < 1$ is the spatial profile of the $L$-periodic orbit in Theorem \ref{th-existence} satisfying (\ref{odd-wave}) with $x_0 = 0$ and $\omega \in (\Omega_L,1)$. Hence, $Q$ is even, $L$-periodic, and bounded. 
Since $\phi$ is even with respect to $x = \frac{L}{4}$ due to the second property in (\ref{odd-wave}), $Q$ has the minimum period $\frac{L}{2}$ and it is also even with respect to $x = \frac{L}{4}$. Therefore, we can repeat the proof of Proposition \ref{propR0} and introduce the family of odd periodic orbits for the energy level $\mathcal{E} = E(\phi,\phi')$ with $\mathcal{E}\in (\mathcal{E}_{\omega},\infty)$. Again, due to monotonicity of the mapping $\mathcal{E}\to T(\mathcal{E},\omega)$ for fixed $\omega \in (-\infty,1)$ in Theorem \ref{th-period}, there exists a unique $\mathcal{E} = \mathcal{E}_L(\omega)$ of $T(\mathcal{E}_L(\omega),\omega) = L$ for a fixed spatial period $L > 0$ and $\omega \in (\Omega_L,1)$. We further define $\phi_L(\omega) \in (0,1)$ as a unique root of $V(\phi) = \mathcal{E}$ for $\mathcal{E} = \mathcal{E}_L(\omega)$, see Remark \ref{remodd}, with the same 
property (\ref{property-energy}) and the same definition (\ref{two-solutions}) of 
two solutions of $\mathcal{L}_+ \varphi = 0$. 

To satisfy the initial data in (\ref{ivp}) for the two solutions, 
we can use the translational invariance of the second-order equation (\ref{odephi}) and translate the family of odd periodic orbits to the family of even periodic orbits by 
\begin{equation}
\label{translation-quarter}
\phi(x) \to \phi\left(x - \frac{1}{4} T(\mathcal{E},\omega)\right).
\end{equation}
Then, assumptions of Theorem \ref{teo12} are satisfied and 
the second solution $\varphi_2$ is $L$-periodic and has two zeros on the periodic domain, whereas the first solution $\varphi_1$ satisfies (\ref{theta}) with the same definition of $\theta$:
$$
\theta = \frac{\partial T}{\partial \mathcal{E}} \biggr|_{\mathcal{E} = \mathcal{E}_L(\omega)} \left[ V'(\phi_L(\omega)) \right]^2.
$$
Since $\theta < 0$ by Proposition \ref{prop-odd-period}, Theorem \ref{teo12} proves the assertion for every $\omega \in (\Omega_L,1)$.

\underline{Proof II.} We define the restrictions of $\mathcal{L}_{+}$ to the odd and even subspaces $L_{\rm per,o}^2\subset L_{\rm per}^2$ and $L_{\rm per,e}^2\subset L_{\rm per}^2$ and denote them by  $\mathcal{L}_{+,\rm o}$ and $\mathcal{L}_{+,\rm e}$, respectively. Since $\phi$ is odd,  $\phi'$ is an element of $\Ker(\mathcal{L}_{+,\rm e})$ but is not an element of $\Ker(\mathcal{L}_{+,\rm o})$. Using (\ref{odephi}), we have for any $\omega \in (\Omega_L,1)$,
$$
		 	(\mathcal{L}_{+,\rm o}\phi,\phi)_{L_{\rm per}^2} = 2(\omega-1)\int_0^L\frac{\phi^4}{(1-\phi^2)^2}dx<0.
$$
This implies by Courant's minimax characterization of eigenvalues of the self-adjoint operator $\mathcal{L}_{+,\rm o}$ that $n(\mathcal{L}_{+,\rm o}) \geq 1$.

By  Krein-Rutman's Theorem, the first eigenvalue of $\mathcal{L}_{+}$ is simple and it is associated to a sign-definite eigenfunction which needs to be even.  Since $0$ is an eigenvalue of $\mathcal{L}_{+,\rm e}$ associated with the 
sign-varying eigenfunction $\phi'$, this implies that $n(\mathcal{L}_{+,\rm e}) \geq 1$. Thus, we have $n(\mathcal{L}_{+}) = n(\mathcal{L}_{+,\rm o}) + n(\mathcal{L}_{+,\rm e}) \geq 2$, but since $\phi'$ has only two zeros on the periodic domain, the zero eigenvalue is nothing but the third eigenvalue of $\mathcal{L}_+$ by Theorem \ref{teo12} which further implies the assertion for every $\omega \in (\Omega_L,1)$.
\end{proof}

		  \begin{remark}
		  	\label{smoothcurve2}
Let $L>0$ be fixed. Using the implicit function theorem and the fact that $\Ker(\mathcal{L}_+)={\rm Span}(\phi')$ with $\phi'$ being even, it is possible to prove again that the mapping $\omega \mapsto \phi \in H_{\rm per,o}^2$ is $C^1$ for every $\omega \in (\Omega_L,1)$ with the same derivative equation (\ref{der-omega}). 
		 \end{remark}
		 
\begin{proposition}
	\label{propL22} 
	$n(\mathcal{L}_-) = 1$ and $z(\mathcal{L}_-) = 1$, that is, 
	$0$ is a simple eigenvalue of $\mathcal{L}_{-}$ associated with the eigenfunction $\phi$, and there is only one negative eigenvalue, which is simple. The remainder of the spectrum of $\mathcal{L}_-$ in $L^2_{\rm per}$ consists of a discrete set of positive eigenvalues with finite multiplicities.
\end{proposition}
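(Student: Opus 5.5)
Since $-1<\phi<1$, the operator $\mathcal{L}_-$ can be rewritten, exactly as in the proof of Proposition \ref{propL2}, in the form
$$
\mathcal{L}_- = -\partial_x^2 + \frac{\omega-\phi^2}{1-\phi^2},
$$
and equation (\ref{ode}) gives $\mathcal{L}_-\phi = 0$. So $\phi$ is an $L$-periodic element of the kernel.

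We then count zeros of $\phi$. For the odd wave satisfying (\ref{odd-wave}) with $x_0=0$, the profile $\phi$ is odd and symmetric about $x=L/4$. It is positive on $(0,L/2)$ and negative on $(L/2,L)$, because it corresponds to a single loop outside the homoclinic orbits on the phase plane and crosses $\phi=0$ only twice per period. Hence $\phi$ has exactly two zeros on the periodic domain. By the oscillation result quoted from \cite{eastham}, an eigenfunction with exactly two zeros corresponds to $\lambda_1$ or $\lambda_2$. Therefore $0\in\{\lambda_1,\lambda_2\}$, and since $\lambda_0<\lambda_1$, we get $n(\mathcal{L}_-)\ge 1$.

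The key step is to decide whether $0=\lambda_1$ with $\lambda_2>0$, or $0=\lambda_2$, or $\lambda_1=\lambda_2=0$ is a double eigenvalue. We would apply Theorem \ref{teo12} to $\mathcal{M}=\mathcal{L}_-$. The potential is even and $L$-periodic, so we translate by the quarter period as in (\ref{translation-quarter}), which makes the periodic solution $\varphi$ proportional to $\phi(x+L/4)$, an even function. Here the roles are reversed compared with Theorem \ref{teo12}: the known periodic solution is even, not odd. We would therefore work directly in the original coordinate, where $\phi$ is odd. We take $\varphi_2=\phi/\phi'(0)$, which is $L$-periodic with two zeros, and compute $\varphi_1$ by reduction of order:
$$
\varphi_1 = \varphi_2\int \frac{dx}{\varphi_2^2}.
$$
Because this integral is singular at the zeros of $\phi$, it has to be regularized. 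The monodromy parameter $\theta$ then equals a regularized integral of $\phi^{-2}$ over a period.

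Rather than evaluate $\theta$, we expect a simpler route, sketched in two steps.

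\textbf{Upper bound on the Morse index by a Sturm argument.} Any eigenfunction for $\lambda_0$ is sign-definite and even, and any eigenfunction for $\lambda_1$ or $\lambda_2$ has two zeros. Restrict $\mathcal{L}_-$ to the odd subspace. There, $\phi$ is positive on $(0,L/2)$ and vanishes at the endpoints, so $\phi$ is the ground state of the Dirichlet problem on $(0,L/2)$. Consequently $0$ is the lowest eigenvalue of $\mathcal{L}_-|_{L^2_{\rm per,o}}$, it is simple, and $n(\mathcal{L}_-|_{\rm o})=0$.

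\textbf{Even subspace.} The ground state is even, so $n(\mathcal{L}_-|_{\rm e})\ge1$. The remaining task is to show that the next even eigenvalue is positive. Even periodic eigenfunctions are odd about $x=L/2$, so on $(0,L/2)$ they satisfy the Neumann condition at $0$ and at $L/2$. The Neumann spectrum on $(0,L/2)$ interlaces with the Dirichlet spectrum, whose lowest value is $0$. The first Neumann eigenvalue is therefore below $0$ and the second is above $0$, giving $n(\mathcal{L}_-|_{\rm e})=1$ and $z(\mathcal{L}_-|_{\rm e})=0$. We expect the strictness of this interlacing to be the main technical point. To handle it, we would write the Neumann and Dirichlet eigenvalue problems via a Prüfer angle: the Neumann eigenvalues below the second Dirichlet eigenvalue are at most one, and the second Neumann eigenvalue lies strictly between the first and second Dirichlet eigenvalues.

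Combining the two restrictions gives $n(\mathcal{L}_-)=1$ and $z(\mathcal{L}_-)=1$ with kernel spanned by $\phi$. The statement about the rest of the spectrum follows from the compact embedding $H^2_{\rm per}\hookrightarrow L^2_{\rm per}$, as in Proposition \ref{propL2}.
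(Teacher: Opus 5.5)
There is a genuine gap, and it sits exactly at the step you flag as the main technical point. Your odd-subspace argument is fine: $\phi>0$ on $(0,L/2)$ with Dirichlet data is the Dirichlet ground state. Your conclusion $\lambda_0<0$ is also fine. But the claim that the second Neumann eigenvalue on $(0,L/2)$ lies strictly above the first Dirichlet eigenvalue is false for general potentials:
\begin{itemize}
\item for a constant potential the two coincide;
\item for the Mathieu potential $2q\cos 2x$ on $(0,\pi)$ they are $a_1(q)$ and $b_1(q)=a_1(-q)$, whose order reverses with the sign of $q$.
\end{itemize}
General comparison only gives that the $k$-th Neumann eigenvalue lies below the $k$-th Dirichlet eigenvalue. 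That yields $\lambda_0<0$ but says nothing about the order of the second Neumann and the first Dirichlet eigenvalues. Your side remark that at most one Neumann eigenvalue lies below the second Dirichlet eigenvalue is also wrong, since the first two always do.

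In fact, for an even $L$-periodic potential, $\lambda_1$ and $\lambda_2$ are precisely the first Dirichlet and the second Neumann eigenvalues on the half-period, in a potential-dependent order. So your interlacing step is equivalent to the statement $0=\lambda_1<\lambda_2$ you are trying to prove. No Pr\"ufer-angle argument can deliver it without using the specific potential. (Minor: even $L$-periodic functions are even, not odd, about $x=L/2$; the Neumann conditions you wrote are nevertheless the right ones.)

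The missing ingredient, which the paper uses, is the identity $\mathcal{L}_-=\mathcal{L}_++\frac{2(1-\omega)\phi^2}{(1-\phi^2)^2}$ together with $\mathcal{L}_+\phi'=0$.
\begin{enumerate}
\item In your coordinates, $\phi'$ is even and satisfies the Neumann conditions at $0$ and $L/2$ (since $\phi''=0$ where $\phi=0$). It has exactly one zero in $(0,L/2)$, at $x=L/4$. Hence $0$ is the second, simple Neumann eigenvalue of $\mathcal{L}_+$.
\item Suppose the second Neumann eigenvalue of $\mathcal{L}_-$ were $\le 0$. On the span of the first two Neumann eigenfunctions of $\mathcal{L}_-$, the quadratic form of $\mathcal{L}_+$ would then be strictly negative, because the added potential is positive a.e. This contradicts the min--max characterization for $\mathcal{L}_+$.
\item Hence the second even eigenvalue of $\mathcal{L}_-$ is positive, giving $n(\mathcal{L}_-|_{\mathrm{e}})=1$ and $z(\mathcal{L}_-|_{\mathrm{e}})=0$, and your argument closes.
\end{enumerate}
The paper runs the same comparison after the quarter-period shift. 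There the shifted $\phi'$ is the sign-definite ground state of $\mathcal{L}_+$ on odd functions, so $\mathcal{L}_-$ is strictly positive on odd functions. On even functions, the shifted $\phi$ has two zeros per period, which makes $0$ the second, simple eigenvalue.
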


\begin{proof}
On comparison with $\mathcal{M}$ in Theorem \ref{teo12}, we have 
	\begin{equation}
	\label{Q-def-minus}
	Q = 1 + \frac{\omega-1}{1-\phi^2},
	\end{equation}
	where $-1 < \phi < 1$ for every $\omega \in (\Omega_L,1)$. 
	Similarly to Proof I of Proposition \ref{propL11}, the $L$-periodic and bounded 
	$Q$ in (\ref{Q-def-minus}) is even with respect to both $x = 0$ and $x = \frac{L}{4}$ and has the minimal period $\frac{L}{2}$. After the translation (\ref{translation-quarter}) with $\mathcal{E}= \mathcal{E}_L(\omega)$, 
the lowest eigenvalue of $\mathcal{L}_+$ in $L^2_{\rm per,o}$ is at $0$, associated with the translated eigenfunction 
\begin{equation*}
\phi'(x) \to \phi'\left(x - \frac{L}{4} \right),
\end{equation*}	
which is now odd. It follows from the relation between $\mathcal{L}_-$ and $\mathcal{L}_+$:
$$
\mathcal{L}_- = \mathcal{L}_+ + \frac{2 (1-\omega) \phi^2}{(1-\phi^2)^2}, \quad \omega < 1, 
$$
that the lowest eigenvalue of $\mathcal{L}_-$ in $L^2_{\rm per,o}$ is greater than 
the lowest eigenvalue of $\mathcal{L}_+$ in $L^2_{\rm per,o}$. Therefore, $\mathcal{L}_-$ is strictly positive in $L^2_{\rm per,o}$. 

To study eigenvalues of $\mathcal{L}_-$ in $L^2_{\rm per,e}$ after the translation (\ref{translation-quarter}) with $\mathcal{E}= \mathcal{E}_L(\omega)$, we note that 
$\mathcal{L}_-$ has the zero eigenvalue in $L^2_{\rm per,e}$ 
associated with the translated eigenfunction 
\begin{equation*}
\phi(x) \to \phi\left(x - \frac{L}{4} \right),
\end{equation*}	
which is now even. Since this eigenfunction for the zero eigenvalue of $\mathcal{L}_-$ in $L^2_{\rm per,e}$ has two zeros on the periodic domain, 
there exists a negative eigenvalue of $\mathcal{L}_-$ in $L^2_{\rm per,e}$ 
and by Theorem \ref{teo12}, $0$ is the second simple eigenvalue of $\mathcal{L}_-$ in $L^2_{\rm per,e}$. Combining with positivity of $\mathcal{L}_-$ in $L^2_{\rm per,o}$, we have the assertion.
\end{proof}

Propositions $\ref{propL11}$ and $\ref{propL22}$ imply the following result for the case of odd periodic waves.

\begin{corollary}
	\label{propLL} 
	The Hessian operator $\mathcal{L}$ defined by (\ref{matrixop}) in  $\mathbb{L}^2_{\rm per}$ with domain $\mathbb{H}^2_{\rm per}$ has three negative eigenvalues, which are semi-simple. Zero is a double eigenvalue with associated eigenfunctions $(\phi',0)$ and $(0,\phi)$.  In addition, the remainder of the spectrum consists of a discrete set of positive eigenvalues with finite multiplicities.
\end{corollary}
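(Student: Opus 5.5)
The corollary follows by combining Propositions \ref{propL11} and \ref{propL22} through the diagonal structure of $\mathcal{L}$ in (\ref{matrixop}). Since $\mathcal{L} = \mathrm{diag}(\mathcal{L}_+,\mathcal{L}_-)$ acts on $\mathbb{L}^2_{\rm per}$ with domain $\mathbb{H}^2_{\rm per}$, for every $\lambda$ we have
\[
\Ker(\mathcal{L}-\lambda) = \Ker(\mathcal{L}_+-\lambda)\times\{0\} \oplus \{0\}\times\Ker(\mathcal{L}_--\lambda),
\]
so the spectrum of $\mathcal{L}$ is the union of the spectra of $\mathcal{L}_\pm$. Multiplicities add. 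Because $\mathcal{L}$ is self-adjoint, every eigenvalue is semi-simple (algebraic and geometric multiplicities coincide), which is what ``semi-simple'' means in the statement.

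The counting then goes as follows.
\begin{itemize}
\item Proposition \ref{propL11} gives two simple negative eigenvalues of $\mathcal{L}_+$.
\item Proposition \ref{propL22} gives one simple negative eigenvalue of $\mathcal{L}_-$.
\item Hence $n(\mathcal{L}) = n(\mathcal{L}_+)+n(\mathcal{L}_-) = 3$, counted with multiplicity. If a negative eigenvalue of $\mathcal{L}_+$ happens to coincide with that of $\mathcal{L}_-$, the corresponding eigenvalue of $\mathcal{L}$ is double, but it is still semi-simple.
\item For the kernel, $z(\mathcal{L}) = z(\mathcal{L}_+)+z(\mathcal{L}_-) = 2$, spanned by $(\phi',0)$ and $(0,\phi)$. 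This uses $\mathcal{L}_+\phi'=0$, obtained by differentiating (\ref{ode}) in $x$, and $\mathcal{L}_-\phi=0$, which is (\ref{ode}) rewritten.
\end{itemize}

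For the remainder of the spectrum, each $\mathcal{L}_\pm$ has compact resolvent because $H^2_{\rm per}\hookrightarrow L^2_{\rm per}$ is compact and the potentials are bounded: $|\phi|\le M<1$ by Remark \ref{remodd}. Therefore $\mathcal{L}$ has compact resolvent on $\mathbb{L}^2_{\rm per}$, and the rest of its spectrum is a discrete set of positive eigenvalues of finite multiplicity.

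There is no real obstacle here. The only point needing care is the remark about possible coincidence of negative eigenvalues of $\mathcal{L}_+$ and $\mathcal{L}_-$, and this is why the statement says semi-simple rather than simple.
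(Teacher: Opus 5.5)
Your proposal is correct and follows the paper's route: the paper obtains this corollary by combining Propositions \ref{propL11} and \ref{propL22} through the diagonal structure of $\mathcal{L}$. Your extra details (additivity of kernels, semi-simplicity from self-adjointness allowing possible coincidence of negative eigenvalues, and compact resolvent from the bounded potentials) are the routine justifications the paper leaves implicit.
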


\section{Constrained energy minimization of periodic waves}
\label{sec-5}

For the wave profile $\phi \in H^1_{\rm per}$ given by either even or odd periodic wave in Theorem \ref{th-existence}, we can define the energy $H(\phi)$ and mass $Q(\phi)$ computed from (\ref{Eu}) and (\ref{Fu}). We recall from Remarks  \ref{smoothcurve1} and \ref{smoothcurve2} that the mapping $\omega \to \phi \in H^1_{\rm per}$ is $C^1$ for either even or odd periodic wave. Since $\phi \in H^1_{\rm per}$ is a critical point of the augmented energy functional $G(u)$ given by (\ref{Gu}), we have
$$
\frac{d}{d\omega} G(\phi) = \frac{d}{d\omega} H(\phi) + \omega \frac{d}{d\omega} Q(\phi) + Q(\phi) = Q(\phi),
$$
which implies that the mapping $\omega \to G(\phi)$ is $C^2$ and 
$$
\frac{d^2}{d\omega^2} G(\phi) = \frac{d}{d\omega} Q(\phi) = 2 \langle \frac{\phi}{1 - \phi^2}, \frac{d \phi}{d\omega}  \rangle_{L^2_{\rm per}}.
$$
By Corollaries \ref{propL} and \ref{propLL}, the Morse index for the Hessian operator $\mathcal{L} = H''(\phi) + \omega Q''(\phi)$ given by  (\ref{matrixop}) 
is nonzero so that $\phi \in H^1_{\rm per}$ is a saddle point of $G(u)$. We further clarify if $\phi \in H^1_{\rm per}$ is a local minimizer of energy $H(u)$ under the constraint of fixed mass $Q(u)$, which is degenerate only due to symmetries. 

The NLS--IDD equation (\ref{NLS-IDD}) can be formulated as 
a Hamiltonian system in the coordinate $u = p + i q$ with $(p,q) \in \mathbb{H}^1_{\rm per}$. The two basic symmetries of the NLS--IDD equation (\ref{NLS-IDD}) are the translation and rotation symmetries. If $u = u(t,x)$ is a solution, so are $e^{-i\theta} u(t,x)$ and $u(x-\xi,t)$ for any  $\theta, \xi \in \mathbb{R}$. Considering $u = p + i q$, this yields the invariance under the two transformations given by 
\begin{equation}\label{S1}
S_1(\theta) \left(
\begin{array}{c}
p \\ q
\end{array}
\right) := \left(
\begin{array}{cc}
\cos{\theta} & -\sin{\theta} \\
\sin{\theta} & \cos{\theta}
\end{array}
\right) \left(
\begin{array}{c}
p \\ q
\end{array}
\right)
\end{equation}
and
\begin{equation}\label{S2}
S_2(\xi) \left(
\begin{array}{c}
p \\ q
\end{array}
\right) := \left(
\begin{array}{c}
p(\cdot - \xi, \cdot) \\
q(\cdot - \xi, \cdot)
\end{array}
\right).
\end{equation}
A standing wave solution of the form $u(t,x)=e^{i\omega t}\phi(x)$ is given by
\begin{equation*}
S_1(\omega t)  \left(
\begin{array}{c}
\phi(x) \\
0
\end{array}
\right) = \left(
\begin{array}{c}
\cos(\omega t) \\
\sin(\omega t)
\end{array}
\right) \phi(x).
\end{equation*}
The actions $S_1$ and $S_2$ in (\ref{S1}) and (\ref{S2}) define unitary groups in $\mathbb{H}^1_{\rm per}$ with infinitesimal generators given by
\begin{equation*}
S_1'(0) := \left(
\begin{array}{cc}
0 & -1 \\
1 & 0
\end{array}
\right) \quad \mbox{\rm and} \quad 
S_2'(0) = \left(
\begin{array}{cc}
1 & 0 \\
0 & 1
\end{array}
\right) \partial_x.
\end{equation*}
Separating the variables for the perturbation as 
$$
u(t,x)=e^{i\omega t} \left( \phi(x) + p(x,t) + i q(x,t) \right)
$$
we obtain the two-dimensional kernel of the Hessian operator 
(\ref{matrixop}) spanned by the two symmetry transformations:
$$
S_1'(0) \left(
\begin{array}{c}
\phi \\ 0
\end{array}
\right) = \left(
\begin{array}{c}
0 \\ \phi
\end{array}
\right) \quad \mbox{\rm and} \quad 
S_2'(0) \left(
\begin{array}{c}
\phi \\ 0
\end{array}
\right) = \left(
\begin{array}{c}
\phi' \\ 0
\end{array}
\right).
$$
These symmetry modes agree with the eigenfunctions in $\Ker(\mathcal{L})$ given by Corollaries \ref{propL} and \ref{propLL}. 

If we consider variation of energy $H(u)$ under fixed mass $Q(u)$, then we define the linear constraint on the real part of the perturbation:
\begin{equation}
\label{constraint}
\langle \phi_0, p \rangle_{L^2_{\rm per}} = 0, \quad \phi_0 \equiv  \frac{\phi}{1-\phi^2}.
\end{equation}
The Morse index of $\mathcal{L}_+$ acting on $p$ changes under the constraint
and we study how it changes separately for the even and odd periodic waves. 

\subsection{Constrained energy minimization of even periodic solutions} 

Under the constraint (\ref{constraint}), we define the Morse and nullity indices 
of the constrained operator $\mathcal{L}_+ |_{\{ \phi_0\}^{\perp}}$ 
and denote them by $n(\mathcal{L}_+ |_{\{ \phi_0\}^{\perp}})$ and $z(\mathcal{L}_+ |_{\{ \phi_0\}^{\perp}})$. 

\begin{proposition}\label{propR0-constraint}
	$n(\mathcal{L}_+ |_{\{ \phi_0\}^{\perp}}) = 0$ and $z(\mathcal{L}_+ |_{\{ \phi_0\}^{\perp}}) = 1$ if and only if the mapping $\omega \to Q(\phi)$ is monotonically increasing at $\omega \in (\omega_L,1)$.
\end{proposition}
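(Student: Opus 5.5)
The plan is to use the standard Vakhitov--Kolokolov/Weinstein argument for a self-adjoint operator with one negative eigenvalue under a single linear constraint. By Proposition \ref{propR0}, $\mathcal{L}_+$ has $n(\mathcal{L}_+)=1$ and $\Ker(\mathcal{L}_+)={\rm Span}(\phi')$. Everything else in $L^2_{\rm per}$ is strictly positive and separated from zero, because the spectrum is discrete.

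By Remark \ref{smoothcurve1}, $\partial_\omega\phi\in H^2_{\rm per,e}$ exists and solves (\ref{der-omega}), namely $\mathcal{L}_+\partial_\omega\phi=-\phi_0$. Since $\phi_0$ is even and $\phi'$ is odd, we have $\phi_0\perp\Ker(\mathcal{L}_+)$, so $\phi_0\in{\rm Range}(\mathcal{L}_+)$. This is consistent with the existence of the preimage $-\partial_\omega\phi$. The constrained index formula (e.g. \cite{GeyerPel25}) then gives
$$
n(\mathcal{L}_+|_{\{\phi_0\}^\perp}) = n(\mathcal{L}_+) - n(\sigma), \qquad z(\mathcal{L}_+|_{\{\phi_0\}^\perp}) = z(\mathcal{L}_+) + z(\sigma),
$$
where
$$
\sigma := \langle \mathcal{L}_+^{-1}\phi_0,\phi_0\rangle_{L^2_{\rm per}} = -\langle \partial_\omega\phi,\phi_0\rangle_{L^2_{\rm per}} = -\tfrac12 \frac{d}{d\omega}Q(\phi),
$$
using the identity for $\frac{d}{d\omega}Q(\phi)$ derived at the start of Section \ref{sec-5}. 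Here $\mathcal{L}_+^{-1}$ denotes the inverse on $\{\phi'\}^\perp$.

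To make this self-contained rather than citing the formula, I will argue directly. \emph{Lower bound on the negative index.} The constrained Morse index is at least $n(\mathcal{L}_+)-1=0$, by min-max with a codimension-one constraint. \emph{When $\sigma<0$.} Suppose $p\perp\phi_0$, $p\perp\phi'$, and write $p=a\chi_0+p_+$, where $\chi_0$ is the negative eigenfunction with $\mathcal{L}_+\chi_0=\lambda_0\chi_0$, $\lambda_0<0$. Let $\psi=\partial_\omega\phi$, so $\mathcal{L}_+\psi=-\phi_0$. A Cauchy--Schwarz argument applied to the positive part of the quadratic form, combined with $\langle \mathcal{L}_+\psi,p\rangle=0$, gives $\langle\mathcal{L}_+p,p\rangle\ge 0$. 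Equality holds only when $p\parallel\psi$, and that is excluded because $\langle\psi,\phi_0\rangle\neq0$. Hence the constrained operator is nonnegative, with kernel exactly ${\rm Span}(\phi')$.

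\emph{When $\sigma\ge 0$.} The test function $\psi$ itself satisfies $\langle \mathcal{L}_+\psi,\psi\rangle=\sigma$. If $\sigma>0$, the vector $\psi-\frac{\langle\psi,\phi_0\rangle}{\|\phi_0\|^2}\phi_0$ lies in $\{\phi_0\}^\perp$. Taking the appropriate combination with $\chi_0$ projected onto the constraint yields a negative direction; equivalently, the function $f(\lambda)=\langle(\mathcal{L}_+-\lambda)^{-1}\phi_0,\phi_0\rangle$ has a zero in $(\lambda_0,0)$, which gives a negative constrained eigenvalue. I will use this eigenvalue-counting function $f(\lambda)$ on $(\lambda_0,\lambda_1)$. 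It is strictly increasing, tends to $-\infty$ as $\lambda\to\lambda_0^+$, and equals $\sigma$ at $\lambda=0$. Its zeros give the constrained eigenvalues in that interval. If $\sigma=0$, then $\psi\in\{\phi_0\}^\perp$ is an additional kernel element, so $z=2$.

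The conclusion is that $n=0$ and $z=1$ hold exactly when $\sigma<0$, i.e. exactly when $\frac{d}{d\omega}Q(\phi)>0$. This is the monotone increase, interpreted pointwise at $\omega$.

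The main obstacle is bookkeeping rather than substance. First, one has to justify that $\langle\phi_0,\chi_0\rangle\neq0$ so that $f(\lambda)\to-\infty$; this holds because $\chi_0$ is positive by Krein--Rutman and $\phi_0>0$. Second, one has to handle the orthogonality to $\phi'$ properly, which follows from parity. Third, one has to read ``monotonically increasing at $\omega$'' as the strict inequality $\frac{d}{d\omega}Q>0$; the degenerate case $\frac{d}{d\omega}Q=0$ is then excluded because it produces the extra kernel element $\partial_\omega\phi$.
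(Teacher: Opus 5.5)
Your proposal is correct and follows the paper's proof. Both apply the constrained index count of \cite[Theorem 2.7]{GeyerPel25}, using that $\phi'\perp\phi_0$ lies in the kernel, and both identify $\langle \mathcal{L}_+^{-1}\phi_0,\phi_0\rangle_{L^2_{\rm per}} = -\langle \phi_0, \partial_\omega\phi\rangle_{L^2_{\rm per}} = -\frac{1}{2}\frac{d}{d\omega}Q(\phi)$ via (\ref{der-omega}). Your Weinstein-type argument and the function $f(\lambda)$ simply make that cited count self-contained.

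One small slip: the general formula should read $n(\mathcal{L}_+|_{\{\phi_0\}^{\perp}}) = n(\mathcal{L}_+) - n(\sigma) - z(\sigma)$. For $\sigma=0$ the constrained operator has $n=0$ and $z=2$, not $n=1$, as your own monotonicity of $f$ on $(\lambda_0,0]$ shows. This is harmless here, because $z=2$ already excludes that case.
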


\begin{proof}
Since $\langle \phi_0, \phi' \rangle_{L^2_{\rm per}} = 0$, we have $\phi' \in \Ker(\mathcal{L}_+ |_{\{ \phi_0\}^{\perp}})$ by Proposition \ref{propR0}. It follows by \cite[Theorem 2.7]{GeyerPel25} that 
$$
n(\mathcal{L}_+ |_{\{ \phi_0\}^{\perp}}) = n(\mathcal{L}_+) - 1 = 0, \quad 
z(\mathcal{L}_+ |_{\{ \phi_0\}^{\perp}}) = z(\mathcal{L}_+) = 1
$$
if and only if 
$$
\langle \mathcal{L}_+^{-1} \phi_0, \phi_0 \rangle_{L^2_{\rm per}} < 0, 
$$
where equation (\ref{der-omega}) implies that 
$$
\langle \mathcal{L}_+^{-1} \phi_0, \phi_0 \rangle_{L^2_{\rm per}} = 
- \langle \phi_0, \frac{d \phi}{d \omega} \rangle_{L^2_{\rm per}} =- \frac{1}{2} \frac{d}{d\omega} Q(\phi) = - \frac{1}{2} \frac{d^2}{d \omega^2} G(\phi).
$$ 
This completes the proof of the assertion.
\end{proof}	

Propositions \ref{propL2} and \ref{propR0-constraint} imply the following result, which yields the assertion of Theorem \ref{th-stability} for even periodic waves.

\begin{corollary}
		The Hessian operator $\mathcal{L}$ defined by (\ref{matrixop}) in  $\mathbb{L}^2_{\rm per}$ with domain $\mathbb{H}^2_{\rm per}$ under the constraint (\ref{constraint}) is non-negative and admits a double zero eigenvalue with associated eigenfunctions $(\phi',0)$ and $(0,\phi)$ if and only if the mapping $\omega \to Q(\phi)$ is monotonically increasing at $\omega \in (\omega_L,1)$. 
\end{corollary}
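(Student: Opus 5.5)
Since $\mathcal{L}$ is block diagonal with blocks $\mathcal{L}_+$ and $\mathcal{L}_-$, and the constraint (\ref{constraint}) acts only on the first component $p$, the constrained Hessian splits as $\mathcal{L}_+|_{\{\phi_0\}^{\perp}} \oplus \mathcal{L}_-$. The Morse and nullity indices then add:
$$
n(\mathcal{L}|_{\rm constr}) = n(\mathcal{L}_+|_{\{\phi_0\}^{\perp}}) + n(\mathcal{L}_-), \qquad
z(\mathcal{L}|_{\rm constr}) = z(\mathcal{L}_+|_{\{\phi_0\}^{\perp}}) + z(\mathcal{L}_-).
$$
I would therefore combine Proposition \ref{propL2} with Proposition \ref{propR0-constraint} directly.

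The argument runs in three steps.

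\textbf{Step 1.} By Proposition \ref{propL2} we have $n(\mathcal{L}_-)=0$ and $z(\mathcal{L}_-)=1$, with kernel spanned by $\phi$. This contributes the eigenfunction $(0,\phi)$, and there is no constraint on $q$.

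\textbf{Step 2.} By Proposition \ref{propR0-constraint}, $n(\mathcal{L}_+|_{\{\phi_0\}^{\perp}})=0$ and $z(\mathcal{L}_+|_{\{\phi_0\}^{\perp}})=1$ if and only if $\frac{d}{d\omega}Q(\phi)>0$. The kernel element is $\phi'$, which lies in $\{\phi_0\}^{\perp}$. This holds because $\phi_0\phi'=\frac{d}{dx}\bigl(-\tfrac12\log(1-\phi^2)\bigr)$ integrates to zero over the period.

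\textbf{Step 3.} Summing gives non-negativity together with a double zero eigenvalue spanned by $(\phi',0)$ and $(0,\phi)$ exactly when $Q$ is increasing. This proves the "if" direction.

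For the converse, suppose the constrained $\mathcal{L}$ is non-negative with a double kernel. Restricting to vectors $(p,0)$ shows that $\mathcal{L}_+|_{\{\phi_0\}^{\perp}}$ has zero Morse index. Its nullity equals the total nullity minus $z(\mathcal{L}_-)=1$, so it is one. Proposition \ref{propR0-constraint} then yields monotone increase.

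The only step needing care is interpreting "monotonically increasing at $\omega$" as the strict inequality $\frac{d}{d\omega}Q(\phi)>0$. In the degenerate case $\frac{d}{d\omega}Q(\phi)=0$, the element $\frac{d\phi}{d\omega}$ enters the constrained kernel of $\mathcal{L}_+$ via (\ref{der-omega}) and the kernel becomes triple. That case is excluded on both sides, consistently with \cite[Theorem 2.7]{GeyerPel25}. The remaining spectrum is discrete and positive by the compact resolvent property already established.
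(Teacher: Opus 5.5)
Your proposal is correct and follows the paper's route exactly: the paper obtains this corollary by combining Proposition \ref{propL2} ($n(\mathcal{L}_-)=0$, $z(\mathcal{L}_-)=1$) with Proposition \ref{propR0-constraint}, using the block-diagonal splitting and the fact that the constraint (\ref{constraint}) acts only on $p$. Your additional details are valid and make explicit what the paper leaves implicit: the check $\phi'\perp\phi_0$ via $\phi_0\phi'=\frac{d}{dx}\bigl(-\tfrac12\log(1-\phi^2)\bigr)$, the argument for the converse direction, and the remark that the kernel becomes triple when $\frac{d}{d\omega}Q(\phi)=0$.
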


\subsection{Constrained energy minimization of odd periodic solutions} 

We recall the definition (\ref{Y-space}) for $\mathcal{Y}\subset H^1_{\rm per}$ spanned by functions which are odd with respect to the half-period. 
We define the Morse and nullity indices of the constrained operator $\mathcal{L}_- |_{\mathcal{Y}}$ and denote them by $n(\mathcal{L}_- |_{\mathcal{Y}})$ and $z(\mathcal{L}_- |_{\mathcal{Y}})$.
Under the additional constraint (\ref{constraint}), we define the Morse and nullity indices of the constrained operator $\mathcal{L}_+ |_{\{ \phi_0 \}^{\perp}  \cap \mathcal{Y}}$ 
and denote them by $n(\mathcal{L}_+ |_{\{ \phi_0\}^{\perp}  \cap \mathcal{Y}})$ and $z(\mathcal{L}_+ |_{\{ \phi_0\}^{\perp}  \cap \mathcal{Y}})$. 

\begin{proposition}\label{propRR-constraint}
	$n(\mathcal{L}_+ |_{\{ \phi_0\}^{\perp} \cap \mathcal{Y}}) = z(\mathcal{L}_+ |_{\{ \phi_0\}^{\perp}  \cap \mathcal{Y}}) = 0$ if and only if the mapping $\omega \to Q(\phi)$ is monotonically increasing at $\omega \in (\Omega_L,1)$. Furthermore, $n(\mathcal{L}_- |_{\mathcal{Y}}) = 0$ and  $z(\mathcal{L}_- |_{\mathcal{Y}}) = 1$.
\end{proposition}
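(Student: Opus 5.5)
The plan rests on identifying $\mathcal{Y}$ with the odd subspace. Combining the defining relation $u(\frac{L}{2}-x) = -u(x-\frac{L}{2})$ with $L$-periodicity, $u(x-\frac{L}{2}) = u(x+\frac{L}{2})$, gives $u(\frac{L}{2}-x) = -u(\frac{L}{2}+x)$. Substituting $x \to x+\frac{L}{2}$ then gives $u(-x) = -u(x+L) = -u(x)$. Conversely, every odd $L$-periodic function satisfies the relation. Hence $\mathcal{Y} = H^1_{\rm per,o}$, and the restricted operators are exactly $\mathcal{L}_{\pm,\rm o}$, the restrictions of $\mathcal{L}_\pm$ to $L^2_{\rm per,o}$ with domain $H^2_{\rm per,o}$. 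With $x_0=0$, the profile $\phi$ is odd, so $\phi_0 = \phi/(1-\phi^2) \in L^2_{\rm per,o}$ and $\phi' \in L^2_{\rm per,e}$. Moreover $\frac{d\phi}{d\omega} \in H^2_{\rm per,o}$ by Remark \ref{smoothcurve2}.

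\textbf{Step 1: unconstrained indices of $\mathcal{L}_{+,\rm o}$.} By Proof II of Proposition \ref{propL11}, $n(\mathcal{L}_{+,\rm o}) \geq 1$ and $n(\mathcal{L}_{+,\rm e}) \geq 1$, while $n(\mathcal{L}_+) = n(\mathcal{L}_{+,\rm o}) + n(\mathcal{L}_{+,\rm e}) = 2$. Therefore $n(\mathcal{L}_{+,\rm o}) = 1$. Since $z(\mathcal{L}_+) = 1$ with kernel spanned by the even function $\phi'$, we get $z(\mathcal{L}_{+,\rm o}) = 0$. Thus $\mathcal{L}_{+,\rm o}$ is invertible on $L^2_{\rm per,o}$.

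\textbf{Step 2: the constrained operator.} I apply \cite[Theorem 2.7]{GeyerPel25} inside the Hilbert space $L^2_{\rm per,o}$, with the single constraint $\phi_0 \in L^2_{\rm per,o}$. Because $\mathcal{L}_{+,\rm o}$ has trivial kernel, that theorem gives
\[
n(\mathcal{L}_+|_{\{\phi_0\}^\perp\cap\mathcal{Y}}) = n(\mathcal{L}_{+,\rm o}) - 1 = 0
\quad \text{and} \quad
z(\mathcal{L}_+|_{\{\phi_0\}^\perp\cap\mathcal{Y}}) = 0
\]
if and only if $\langle \mathcal{L}_{+,\rm o}^{-1}\phi_0,\phi_0\rangle_{L^2_{\rm per}} < 0$. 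If this quantity is zero, the nullity becomes $1$; if it is positive, the Morse index stays $1$.

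By (\ref{der-omega}), which holds in the odd space, $\mathcal{L}_{+,\rm o}^{-1}\phi_0 = -\frac{d\phi}{d\omega}$. Consequently
\[
\langle \mathcal{L}_{+,\rm o}^{-1}\phi_0,\phi_0\rangle_{L^2_{\rm per}} = -\frac{1}{2}\frac{d}{d\omega}Q(\phi),
\]
exactly as in Proposition \ref{propR0-constraint}. This proves the first equivalence, where "monotonically increasing at $\omega$" is read as $\frac{d}{d\omega}Q(\phi) > 0$, the same convention as in the even case. The main point of care here is that the cited theorem is applied to the restricted operator on the invariant subspace $L^2_{\rm per,o}$, and not to $\mathcal{L}_+$ on all of $L^2_{\rm per}$.

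\textbf{Step 3: $\mathcal{L}_-$ on $\mathcal{Y}$.} By Proposition \ref{propL22}, $n(\mathcal{L}_-) = 1$ and $z(\mathcal{L}_-) = 1$, with kernel spanned by the odd function $\phi$, so $\phi \in \mathcal{Y}$. The lowest eigenvalue of $\mathcal{L}_-$ is simple with a sign-definite eigenfunction, by Floquet theory \cite{Magnus} or Krein--Rutman. Since the potential of $\mathcal{L}_-$ is even, this positive eigenfunction must itself be even, because an odd function changes sign. Hence the unique negative eigenvalue lies in $L^2_{\rm per,e}$. It follows that $n(\mathcal{L}_-|_{\mathcal{Y}}) = 0$ and $z(\mathcal{L}_-|_{\mathcal{Y}}) = 1$.
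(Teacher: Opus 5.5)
Your proposal is correct and follows the paper's proof. You restrict to $\mathcal{Y}$ (which, as you observe, is just $H^1_{\rm per,o}$), read off $n(\mathcal{L}_+|_{\mathcal{Y}})=1$, $z(\mathcal{L}_+|_{\mathcal{Y}})=0$, $n(\mathcal{L}_-|_{\mathcal{Y}})=0$ and $z(\mathcal{L}_-|_{\mathcal{Y}})=1$ from the parity of the ground states and kernels, and then apply \cite[Theorem 2.7]{GeyerPel25} with $\phi_0\in\mathcal{Y}$ and (\ref{der-omega}). Your Step 1 also makes explicit something the paper only asserts: by combining $n(\mathcal{L}_{+,\rm o})\ge 1$ and $n(\mathcal{L}_{+,\rm e})\ge 1$ from Proof II of Proposition \ref{propL11} with $n(\mathcal{L}_+)=2$, you show that the second negative eigenvalue of $\mathcal{L}_+$ lies in the odd subspace.
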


\begin{proof}
	Since $\phi' \notin \mathcal{Y}$ and $\phi \in \mathcal{Y}$, we have $\phi' \notin \Ker(\mathcal{L}_+ |_{\mathcal{Y}})$ and $\phi \in \Ker(\mathcal{L}_- |_{\mathcal{Y}})$ so that $z(\mathcal{L}_+ |_{\mathcal{Y}}) = 0$ and $z(\mathcal{L}_- |_{\mathcal{Y}}) = 1$. Since the eigenfunctions of $\mathcal{L}_+$ and $\mathcal{L}_-$ for the smallest (negative) eigenvalue are even with respect to the half-period, we also have 
	$n(\mathcal{L}_+ |_{\mathcal{Y}}) = 1$ and 	$n(\mathcal{L}_- |_{\mathcal{Y}}) = 0$. In addition, we have $\phi_0 \in \mathcal{Y}$. It follows by \cite[Theorem 2.7]{GeyerPel25} that 
	$$
	n(\mathcal{L}_+ |_{\{ \phi_0 \cap \mathcal{Y} \}^{\perp}}) = n(\mathcal{L}_+ |_{\mathcal{Y}}) - 1 = 0, \quad 
	z(\mathcal{L}_+ |_{\{ \phi_0 \cap \mathcal{Y} \}^{\perp}}) = z(\mathcal{L}_+ |_{\mathcal{Y}})  = 0
	$$
	if and only if 
	$$
	\langle \mathcal{L}_+^{-1} \phi_0, \phi_0 \rangle_{L^2_{\rm per}} < 0, 
	$$
	where equation (\ref{der-omega}) implies again that 
	$$
	\langle \mathcal{L}_+^{-1} \phi_0, \phi_0 \rangle_{L^2_{\rm per}} = 
	- \langle \phi_0, \frac{d \phi}{d \omega} \rangle_{L^2_{\rm per}} =- \frac{1}{2} \frac{d}{d\omega} Q(\phi) = - \frac{1}{2} \frac{d^2}{d \omega^2} G(\phi).
	$$ 
	This completes the proof of the assertion.
\end{proof}	

Proposition \ref{propRR-constraint} implies the following result, which yields the assertion of Theorem \ref{th-stability} for odd periodic waves.

\begin{corollary}
	
	The Hessian operator $\mathcal{L}$ defined by (\ref{matrixop}) in  $\mathbb{L}^2_{\rm per}$ with domain $\mathbb{H}^2_{\rm per} \cap \mathcal{Y}$ under the constraint (\ref{constraint}) is non-negative and admits a simple zero eigenvalue with the associated eigenfunction $(0,\phi)$ if and only if the mapping $\omega \to Q(\phi)$ is monotonically increasing at $\omega \in (\Omega_L,1)$. 
\end{corollary}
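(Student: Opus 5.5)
The corollary follows by combining Proposition \ref{propRR-constraint} with the block-diagonal structure of $\mathcal{L}$ in (\ref{matrixop}). First I will check that $\mathcal{Y}$ is invariant under both $\mathcal{L}_+$ and $\mathcal{L}_-$. The potentials in (\ref{Q-def}) and (\ref{Q-def-minus}) depend only on $\phi^2$. By (\ref{odd-wave}) with $x_0=0$, $\phi$ is odd with respect to $x = L/2$, so $\phi^2$ is even about the half-period. Multiplication by an even function and $\partial_x^2$ both preserve oddness about $x=L/2$. Hence $\mathcal{L}_\pm$ map $H^2_{\rm per}\cap\mathcal{Y}$ into the $L^2_{\rm per}$-closure of $\mathcal{Y}$, and the restricted operators are self-adjoint there with compact resolvent.

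Next, because $\mathcal{L}$ is diagonal, the quadratic form splits as
$$
\langle \mathcal{L}(p,q),(p,q)\rangle = \langle \mathcal{L}_+ p,p\rangle_{L^2_{\rm per}} + \langle \mathcal{L}_- q,q\rangle_{L^2_{\rm per}}.
$$
The constraint (\ref{constraint}) acts only on $p$, and $\phi_0 = \phi/(1-\phi^2) \in \mathcal{Y}$. So the constrained spectrum is the union of the spectrum of $\mathcal{L}_+|_{\{\phi_0\}^\perp\cap\mathcal{Y}}$, acting on $p$, and that of $\mathcal{L}_-|_{\mathcal{Y}}$, acting on $q$. Negative and zero eigenvalues therefore add: the Morse index equals $n(\mathcal{L}_+|_{\{\phi_0\}^\perp\cap\mathcal{Y}}) + n(\mathcal{L}_-|_{\mathcal{Y}})$, and likewise for the nullity.

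Now I apply Proposition \ref{propRR-constraint}. It gives $n(\mathcal{L}_-|_{\mathcal{Y}})=0$ and $z(\mathcal{L}_-|_{\mathcal{Y}})=1$ unconditionally, with kernel spanned by $\phi\in\mathcal{Y}$. It also gives that the $p$-part has $n=z=0$ if and only if $\omega\to Q(\phi)$ is increasing. In the increasing case, $\mathcal{L}$ is non-negative with kernel exactly ${\rm Span}\{(0,\phi)\}$. Conversely, if $Q$ is not increasing, Proposition \ref{propRR-constraint} says the $p$-part has a negative or a zero eigenvalue. In the first case non-negativity fails. In the second, the kernel acquires a vector $(p,0)$ with $p\neq 0$, so the zero eigenvalue is no longer simple. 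Either way the stated property fails, which proves the equivalence. The translational mode $(\phi',0)$ is absent because $\phi'$ is even about the half-period and so $\phi'\notin\mathcal{Y}$.

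The only delicate point is the interpretation of the constrained operator. The form-splitting argument has to be justified in the sense of \cite[Theorem 2.7]{GeyerPel25}, as the compression $\Pi\mathcal{L}_+\Pi$ with $\Pi$ the orthogonal projection onto $\{\phi_0\}^\perp$ inside $\mathcal{Y}$. I will also verify that $\mathcal{L}_+^{-1}\phi_0$ is well defined in $\mathcal{Y}$. This holds because $z(\mathcal{L}_+|_{\mathcal{Y}})=0$, and by (\ref{der-omega}) it equals $-\,d\phi/d\omega$, which lies in $\mathcal{Y}$ since $\phi(\cdot;\omega)$ stays in $\mathcal{Y}$ for all $\omega$. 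This is routine once the invariance of $\mathcal{Y}$ is established.
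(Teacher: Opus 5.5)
Your proposal is correct and follows essentially the paper's route: the corollary is read off from Proposition \ref{propRR-constraint} through the block-diagonal structure of $\mathcal{L}$. The $q$-block contributes the unconditional simple kernel $(0,\phi)$, and the constrained $p$-block has no negative or zero eigenvalues exactly when $\omega \to Q(\phi)$ is increasing. Your extra checks (invariance of $\mathcal{Y}$ under $\mathcal{L}_\pm$, $\phi_0 \in \mathcal{Y}$, $\phi' \notin \mathcal{Y}$, and the case split for the converse direction) make explicit what the paper leaves implicit in its one-line deduction.
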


\section{Numerical approximations}
\label{sec-6}

Given a fixed $\omega \in (0,1)$, the energy level of homoclinic orbit $\mathcal E_\omega \in (0,\infty)$ is computed, and then  the period function $T(\mathcal{E},\omega)$ for the even and odd periodic waves is approximated separately by using (\ref{even-period}) and (\ref{odd-period}), respectively. 
The period function is plotted on Figure \ref{fig-period}.

For the even waves, since the period function diverges as $\mathcal E \to \mathcal E_\omega^- $, the grid on $(0,\mathcal E_\omega)$ are defined in two regions $(\mathcal E_\omega-10^{-3},\mathcal E_\omega)$ with $2000$ equally spaced grid points and $(0, \mathcal E_\omega-10^{-3})$ with $300$ 
equally spaced grid points. For the odd waves, the grids are defined analogously as on $(\mathcal E_\omega, \mathcal E_\omega +10^{-2})$ with $100$ grid points and $( \mathcal E_\omega +10^{-2}, 0.5)$ with $300$ grid points. We evaluate the integrals with the absolute and relative tolerances given by  $\epsilon_\mathrm{abs}=10^{-10}$ and $\epsilon_\mathrm{rel}=10^{-8}$ respectively. Selected values $\omega =0.3,0.5,0.7,0.9$ are plotted in Figure \ref{fig-period} with $T = 2\pi \sqrt{\frac{1-\omega}{2\omega}}$ at $\mathcal{E}= 0$ represented by solid dots.

Once the period function $T(\mathcal{E},\omega)$ is computed, we fix the spatial 
period $L > 0$ and find the uniquely defined energy level $\mathcal{E}_L(\omega)$ from a root of $T(\mathcal{E}_L(\omega), \omega) = L$. This is possible due to monotonicity of the period function with respect to $\mathcal{E}$ in Theorem \ref{th-period}. We use Newton's root-finding method for a grid $\{\omega_j\}_{j=1}^{M}$ of values of $\omega$ in either $(\omega_L,1)$ or $(\Omega_L,1)$, see Theorems \ref{th-existence} and \ref{th-period}. We thus obtain the values $\{ \mathcal{E}_j \}_{j=1}^M$ for $\mathcal{E}_L(\omega_j)$, which are plotted on the left panels of Figures \ref{fig-even} and \ref{fig-odd} relative to $\mathcal{E}_{\omega}$, for $\tilde{\mathcal{E}}_L(\omega) = \mathcal{E}_L(\omega) - \mathcal{E}_{\omega}$. Thus, the solid dots for $\omega = \omega_L$ correspond to $\tilde{\mathcal{E}}_L(\omega) = - \mathcal{E}_{\omega}$ in Figure \ref{fig-even} 
and the solid dots for $\omega = \Omega_L$ correspond to $\tilde{\mathcal{E}}_L(\omega) = 0$ on Figure \ref{fig-odd}.

Numerical inaccuracies occur in the computations of $\mathcal{E}_L(\omega)$ near $\omega = 1$ due to the wave profiles becoming steep, and this is independent of the grid $\{\omega_j\}_{j=1}^{M}$. The solid dots on the left panels of Figures \ref{fig-even} and \ref{fig-odd} show the end points for which the accuracy is verified within $10^{-8}$ computational error. The limiting values of  $\mathcal{E}_L(\omega)$ at $\omega = 1$ obtained from  (\ref{E-limeven}) and (\ref{E-limodd}) are shown by open dots on the left panels 
of  Figures \ref{fig-even} and \ref{fig-odd}. An interpolation is performed between the last numerical data for $\mathcal{E}_L(\omega)$ and the value of 
$\mathcal{E}_L(\omega = 1)$ and it is shown by the dotted line on the left panels  of Figures \ref{fig-even} and \ref{fig-odd}. 

For the computed set $\{ (\mathcal{E}_i,\omega_i)\}_{i = 1}^M$, the profile $\phi = \phi(x)$ of the even periodic wave satisfying (\ref{even-wave}) with $x_0 = 0$ 
is obtained by numerical integration of  
\begin{equation}
\label{F-even}
x =  F_{\rm even}(\phi) = \int^{M}_{\phi} \frac{d \phi}{\sqrt{2\mathcal E_L(\omega) - (\omega - \phi^2) - (1-\omega)\log\frac{1-\omega}{1-\phi^2}}}, \quad \phi \in [m,M],
\end{equation}
	where $m$ and $M$ are obtained from two positive roots of $V(\phi) = \mathcal{E}_L(\omega)$ for $\omega \in (\omega_L,1)$ and $\mathcal{E}_L(\omega) \in (0,\mathcal{E}_{\omega})$, see Remark \ref{boundphi}. The solution profile is defined implicitly as $x = F_{\rm even}(\phi) \in \left[0,\frac{L}{2}\right]$ with $\phi(0) = M$ and $\phi\left(\frac{L}{2}\right) = m$. It is extended from  $\left[0,\frac{L}{2}\right]$ to $\left[-\frac{L}{2},0\right]$ 
	by using the even reflection: $\phi(-x) = \phi(x)$. This yields the wave profiles on the right panel of Figure \ref{fig-even}. The dashed line shows the peaked profile at $\omega = 1$ given analytically by (\ref{phi-limeven}). 
	
	For the computed set $\{ (\mathcal{E}_i,\omega_i)\}_{i = 1}^M$, the profile $\phi = \phi(x)$ of the odd periodic wave satisfying (\ref{odd-wave}) with $x_0 = 0$ is obtained by numerical integration of  
\begin{equation}
\label{F-odd}
	x =  F_{\rm odd}(\phi ) = -\int_{0}^{\phi} \frac{d \phi}{\sqrt{2\mathcal E_L(\omega) - (\omega - \phi^2) - (1-\omega)\log\frac{1-\omega}{1-\phi^2}}}, \quad \phi \in [0,M],
\end{equation}
		where $M$ is obtained from the only positive root of $V(\phi) = \mathcal{E}_L(\omega)$ for $\omega \in (\Omega_L,1)$ and $\mathcal{E}_L(\omega) \in (\mathcal{E}_{\omega},\infty)$, see Remark \ref{remodd}. The solution profile is defined implicitly as $x = F_{\rm odd}(\phi) \in \left[0,\frac{L}{4}\right]$ with $\phi(0) = 0$ and $\phi\left(\frac{L}{4}\right) = M$. It is extended from $\left[0,\frac{L}{4}\right]$ to $\left[-\frac{L}{2},0\right]$ by using the symmetries of the odd periodic wave: $\phi(-x) = -\phi(x) = -\phi\left(\frac{L}{2} - x\right)$. This yields the wave profiles on the right panel of Figure \ref{fig-odd}. The dashed line shows the peaked profile at $\omega = 1$ given analytically by (\ref{phi-limodd}). 

We compute the mass $Q(\phi)$ shown in Figure \ref{fig-mass} versus $\omega$ by using the integration in the $\phi$ variable. For the even periodic wave, we use 
\begin{equation}
\label{Q-even}
Q (\phi) = - 2\int_{0}^{L/2} \log (1-\phi^2)\,dx = 2 \int^{M}_{\phi} \frac{\log(1-\phi^2)}{\sqrt{2\mathcal E_L(\omega) - (\omega - \phi^2) - (1-\omega)\log\frac{1-\omega}{1-\phi^2}}} d \phi.
\end{equation}
Computing the integral numerically for $\{ (\mathcal{E}_i,\omega_i)\}_{i = 1}^M$ 
yields the left panel of Figure \ref{fig-mass}. The numerical data are again 
missing near $\omega = 1$ and the last available data is shown by the solid dots, for which the accuracy of $10^{-8}$ is guaranteed. The open dots show the limiting 
values of $Q(\phi)$ at $\omega = 1$, which can be computed analytically as
\begin{align}
\label{Q-even-limit}
\omega = 1 : \quad Q(\phi) = 2L \log \left [2\cosh \left (\frac{L}{2}\right ) \right ] - L^2 + \frac{\pi^2}{6}  -  \mathrm{Li_2}(e^{-2L}), 
\end{align}
where $\mathrm{Li_2}$ denotes the dilogarithm function 
$$
\mathrm{Li_2}(z) := -\int_0^z \frac{\ln(1-u)}{u}\,du.
$$  
Interpolation between the last available data (right solid dots) and the limiting value of $Q(\phi)$ at $\omega = 1$ (open dots) is shown by the dotted line on Figure \ref{fig-mass}.

The dashed line on the left panel of Figure \ref{fig-mass} shows the limiting value of $Q(\phi)$ versus $\omega$ in the soliton case with $L = \infty$, for which the integral for $Q(\phi)$ is still computed on the compact interval. The dependence of $Q(\phi)$ versus $\omega$ is similar to the periodic case $L < \infty$ and displays a single maximum before the peak for which 
$$
\omega = 1, \quad L = \infty : \quad  Q(\phi) =  \frac{\pi^2}{6}.
$$

For the odd periodic wave, we use 
\begin{equation}
\label{Q-odd}
Q (\phi) = - 4\int_{0}^{L/4} \log (1-\phi^2)\,dx = 4 \int_0^{\phi} \frac{\log(1-\phi^2)}{\sqrt{2\mathcal E_L(\omega) - (\omega - \phi^2) - (1-\omega)\log\frac{1-\omega}{1-\phi^2}}} d \phi.
\end{equation}
Computing the integral numerically for $\{ (\mathcal{E}_i,\omega_i)\}_{i = 1}^M$ 
yields the right panel of Figure \ref{fig-mass}. The limiting 
value of $Q(\phi)$ at $\omega = 1$ is computed analytically as 
\begin{align}
\label{Q-odd-limit}
\omega = 1 : \quad Q(\phi) = 2L \log \left [2\sinh \left (\frac{L}{4}\right ) \right ] - \frac{L^2}{2} + \frac{\pi^2}{3}  -  2\mathrm{Li_2}(e^{-L}).
\end{align}
We note that 
$$
\omega = 1, \quad L = \infty : \quad  Q(\phi) =  \frac{\pi^2}{3}
$$
which is double compared to the case of the even periodic wave. This corresponds to the fact that the odd periodic wave represents two solitons on a single period for large $L$. 

Table \ref{table-Q} represents the numerical values of $Q(\phi)$  used in Figure \ref{fig-mass} for $\omega = 1$. These numerical values are computed from (\ref{Q-even-limit}) and (\ref{Q-odd-limit}). 
\begin{table}[h!]
\centering
\begin{tabular}{|c|c|c|}
\hline
$L$ & $Q$ (even, $\omega = 1$) & $Q$ (odd, $\omega = 1$) \\
\hline
$2\pi$ & 1.66837567259328 & 2.73100651970082 \\
\hline
$3\pi$ & 1.64645514903036 & 3.11961052401896 \\
\hline
$4\pi$ & 1.64502171315626 & 3.24288332619890 \\
\hline
\end{tabular}
\vspace{0.25cm}
\caption{The numerical values of $Q(\phi)$ used in Figure \ref{fig-mass} for $\omega = 1$.}
\label{table-Q}
\end{table}

Finally, we expand Remark \ref{rem-data} to discuss the three-branched behavior of $Q(\phi)$ versus $\omega$ in the soliton limit $L = \infty$ observed 
in \cite{KPR24} and disputed in Figure \ref{fig-mass}. We cannot reproduce 
the three-branched behavior by using (\ref{Q-even}) and (\ref{Q-odd}). Even if we take fewer number of grid points, we would evaluate $Q(\phi)$ with a lower accuracy but still observe the two-branched behavior of $Q(\phi)$ versus $\omega$ in Figure \ref{fig-mass}.

The reason for the three-branched behavior of $Q(\phi)$ observed in \cite{KPR24} is due to the finite-difference approximation applied to the differential equation (\ref{ode}) and to the Hessian operator $\mathcal{L}$ in (\ref{matrixop}) with the uniform grid of $x$ values. The larger grid spacing leads to inaccurate computations of $\phi$ near the maximum $\phi(0) = M$ and results in highly inaccurate computations of $Q(\phi)$. 

We fix $\omega \in (0,1)$ and consider the differential equation (\ref{ode}) 
on the truncated interval $[-L,L]$ with $L = 20$. Since the bright solitons 
decay exponentially to zero at infinity, we can use the Dirichlet boundary conditions $\phi(\pm L) = 0$. We replace $[-L,L]$ by the uniform grid of $N$ 
points $\{x_i\}_{i=1}^{N}$ with the spacing $\Delta x = \frac{2L}{N-1}$ 
and compute approximations for the solution profile  $\{\phi_i\}_{i=1}^N$ with $\phi_1 = \phi_N= 0$. The second derivative can be constructed using the central difference method as $\{ (D^2\phi)_j \}_{j=2}^{N-1}$ given by 
$$
(D^2\phi)_j= \frac{\phi_{j-1} - 2\phi_j + \phi_{j+1}}{(\Delta x)^2}, \quad \quad j = 2,\dots N-1.
$$
The residual of the differential equation (\ref{ode}) is defined by
$$
R_j = (1-\phi_j^2) (D^2 \phi)_j - (\omega - \phi_j^2) \phi_j, \quad \quad j = 2,\dots N-1,
$$
and we introduce the mapping $T: \mathbb R^{N-2} \to \mathbb R^{N-2}$ such that $T(\phi) = R$. The first derivative of the mapping is given by the Jacobian matrix $J = \nabla T \in \mathbb R^{N-2 \times N-2}$ with the nonzero elements given by 
$$
J_{j,j\pm1} = \frac{1-\phi_j^2}{(\Delta x)^2}, \quad \quad J_{j,j} = -\frac{2(1-\phi_j^2)}{(\Delta x)^2} - 2\phi_j (D^2\phi)_j - \omega + 3\phi_j^2, \quad 2 \leq j \leq N-2. 
$$

To minimize the residual $\Phi(\phi) = \frac{1}{2} \| T(\phi) \|^2$, we implement the linear Newton's method in the iterations $\{ \phi^{(k)}\}_{k = 0}^{\infty}$ defined by 
$ J (\phi^{(k+1)} - \phi^{(k)}) = -T(\phi^{(k)})$ starting with a suitable initial guess 
$$
\phi^{(0)}_j = \min \{0.9,\sqrt{2\omega} \} \mathrm{sech} (\sqrt{\omega}x_j), \quad  j = 2,\dots N-1. 
$$
To avoid overshoot, we perform backtrack line search by starting from $a = 1$ and reducing to find $a \in (0,1]$ that satisfies the decreasing condition 
$$
\Phi (\phi^{(k)} + a\psi^{(k)}) \leq \Phi (\phi^{(k)})(1-ca), \quad \mbox{\rm where} \;\; \psi^{(k)} = - J^{-1} T(\phi^{(k)}),
$$
for a small $c = 10^{-4}$. When this is achieved, we accept and update 
the next iteration as $\phi^{(k+1)} =\phi^{(k)} + a \psi^{(k)}$, after which we compute $J^{(k+1)}$, $T(\phi^{(k+1)})$, and $\psi^{(k+1)}$. The algorithm is terminated when the convergence condition $\Vert T(\phi^{(k+1)}) \Vert /\sqrt{N-2} \leq \epsilon_\mathrm{tol}$ with a small tolerance $\epsilon_\mathrm{tol} = 10^{-8}$. This iterative method yields the solution profile $\{(x_j,\phi_j)\}_{j=1}^{N}$, from which we compute the mass integral $Q(\phi)$  by using the trapezoidal method.

Figure \ref{fig-discrete} shows the plot of $Q(\phi)$ versus $\omega$ for two spacings $\Delta x = 0.1$ and $\Delta x = 0.2$, compared to the dependence computed from (\ref{Q-even}) in the limit $L \to \infty$ (dashed line). 
The latter dependence is interpreted as the limit $\Delta x \to 0$ 
in the finite-difference method. The finite-difference approximation with $\Delta x > 0$ for the differential equation (\ref{ode}) leads to the three-branched behavior reported in \cite{KPR24}. We computed the mass integral for the values of $\omega$ in $[0.005, 0.93]$ on an equally spaced grid of $100$ points. Since the numerical data are not accurate near $\omega = 1$, we perform the quadratic extrapolation to extend the values of the mass integral from the last numerical data at $\omega = 0.93$ into the interval $[0.93,1]$.

Thus, we conclude that the three-branched behavior of $Q(\phi)$ versus $\omega$ is a numerical artefact of the finite-difference method.

\begin{figure}[!ht]
	\centering
	\includegraphics[width=0.7\textwidth,height=0.4\textheight]{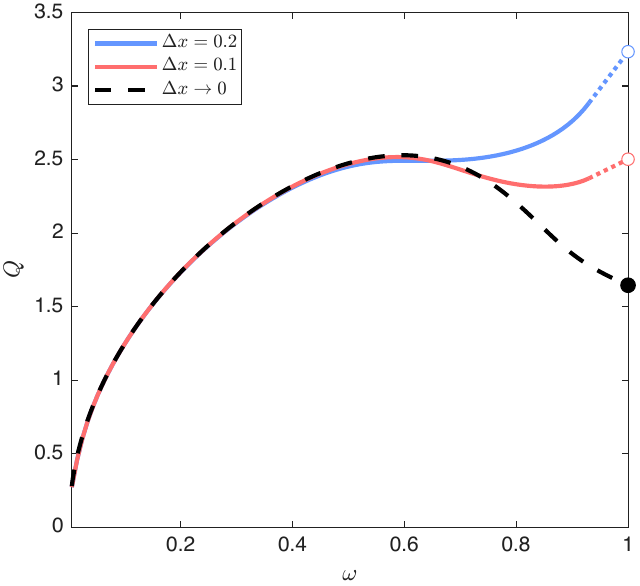} 
	\caption{The dependence of the mass integral $Q(\phi)$ 
		computed by the finite-difference method versus $\omega$ for $\Delta x = 0.1, 0.2$. The dashed line shows the same dependence computed by using (\ref{Q-even}) for $L \to \infty$. } 
	\label{fig-discrete}
\end{figure}

{\bf Acknowledgement.} We would like to thank P. G. Kevrekidis for suggesting that the three-branched behavior of the mass versus the frequency observed in \cite{KPR24} can be explained as a numerical artefact of the finite-difference method and for preparing a preliminary version of Figure \ref{fig-discrete}. A part of this work is performed as the undergraduate BSc thesis of S. Wang at McMaster University.

   \end{document}